\theoremstyle{plain}
\newtheorem{proposition}[subsubsection]{Proposition}
\newtheorem{theorem}[subsubsection]{Theorem}
\newtheorem{lemma}[subsubsection]{Lemma}
\newtheorem{corollary}[subsubsection]{Corollary}
\newtheorem*{thm*}{Theorem}
\theoremstyle{remark}
\newtheorem{remark}[subsubsection]{Remark}
\newtheorem*{rem*}{Remark}
\theoremstyle{definition}
\newtheorem{definition}[subsubsection]{Definition}
\newtheorem{example}[subsubsection]{Example}
\newcommand{\image}{\ensuremath{\bar{N}}}
\renewcommand{\phi}{\varphi}
\newcommand{\A}{\ensuremath{\mathcal A}}
\newcommand{\D}{\ensuremath{\mathcal D}}
\newcommand{\X}{\ensuremath{\mathscr X}} %family of sets
\renewcommand{\P}{\ensuremath{\mathbb P}}
\newcommand{\PPP}{\ensuremath{\Pi}}
\newcommand{\PP}{\ensuremath{\mathcal {P}}}% parallel classes  
\newcommand{\CP}{\ensuremath{\mathbb C}\P}
\newcommand{\C}{\ensuremath{\mathbb C}}
\renewcommand{\L}{\ensuremath{\mathscr {L}}}% poset of flats
\newcommand{\LL}{\ensuremath{\mathbf{L}}}% intersection poset
\newcommand{\R}{\ensuremath{\mathbb R}}
\newcommand{\Z}{\ensuremath{\mathbb Z}}
\newcommand{\Q}{\ensuremath{\mathbb Q}}
\newcommand{\B}{\ensuremath{\mathcal B}}
\renewcommand{\S}{\ensuremath{\mathcal S}}
\newcommand{\sS}{\ensuremath{\mathscr S}}
\newcommand{\F}{\ensuremath{\mathcal F}}
\newcommand{\KK}{\ensuremath{\mathsf K}}
\newcommand{\LLL}{\ensuremath{\mathsf L}}
\newcommand{\Fl}{\operatorname{\sf Flag}}
\newcommand{\id}{\operatorname{id}}
\newcommand{\im}{\operatorname{im}}
\newcommand{\Aut}{\operatorname{Aut}}
\newcommand{\Lie}{\operatorname{Lie}}
\newcommand{\supp}{\operatorname{supp}}
\newcommand{\rk}{\operatorname{rk}}
\newcommand{\ab}{\ensuremath{\operatorname{ab}}}
\newcommand{\lk}{\ensuremath{\operatorname{lk}}}
\newcommand{\codim}{\operatorname{codim}}
\newcommand{\coker}{\operatorname{coker}}
\renewcommand{\k}{\ensuremath{\Bbbk}}
\renewcommand{\bar}{\overline}
\renewcommand{\to}{\longrightarrow}
\newcommand{\bigast}{\mathop{\scalebox{1.4}{\raisebox{-0.2ex}{$\ast$}}}}%
\begin{document}

\title[Discriminantal bundles, arrangement groups, and subdirect products]{Discriminantal bundles, arrangement groups, and subdirect products of free groups}        

\author[D. Cohen]{Daniel C. Cohen}%$^\dag$}
\address{Department of Mathematics, Louisiana State University, Baton Rouge, Louisiana 70803}
\email{\href{mailto:cohen@math.lsu.edu}{cohen@math.lsu.edu}}
\urladdr{\href{http://www.math.lsu.edu/~cohen/}
{www.math.lsu.edu/\char'176cohen}}
%\thanks{D.C. partially supported by the Simons Foundation and the Mathematisches Forschungsinstitut Oberwolfach.}

\author[M. Falk]{Michael J. Falk}
\address{Department of Mathematics and Statistics, Northern Arizona University, Flagstaff, Arizona 86011}
\email{\href{mailto:michael.falk@nau.edu}{michael.falk@nau.edu}}
\urladdr{\href{http://www.cefns.nau.edu/~falk/}
{www.cefns.nau.edu/\char'176falk}}
%\thanks{M.F. partially supported by 2018-19 Fulbright Global Scholar grant.}

\author[R. Randell]{Richard C. Randell}%$^\dag$}
\address{Department of Mathematics,University of Iowa, Iowa City, Iowa 52242}
\email{\href{mailto:randell@math.uiowa.edu}{randell@math.uiowa.edu}}

\dedicatory{
in memory of Bill Arvola and \c{S}tefan Papadima
}

\begin{abstract}
We construct bundles $E_k(\A,\F) \to M$ over the complement $M$ of a complex hyperplane arrangement \A, depending on an integer $k \geq 1$ and a set $\F=\{f_1, \ldots, f_\mu\}$ of continuous functions $f_i \colon M \to \C$ whose differences are nonzero on $M$, generalizing the configuration space bundles arising in the Lawrence-Krammer-Bigelow representation of the pure braid group. We display such families \F\ for rank two arrangements, reflection arrangements of types $A_\ell$, $B_\ell$, $D_\ell$, $F_4$, and for arrangements supporting multinet structures with three classes, with the resulting bundles having nontrivial monodromy around each hyperplane. The construction extends to arbitrary arrangements by pulling back these bundles along products of inclusions arising from subarrangements of these types.

We then consider the faithfulness of the resulting representations of 
the arrangement group $\pi_1(M)$. 
We describe the kernel of the product $\rho_\X \colon G \to \prod_{S \in \X} G_S$ of homomorphisms of a finitely-generated group $G$ onto quotient groups $G_S$ determined by a family \X\ of subsets of a fixed set of generators of $G$, extending a result of T.~Stanford about Brunnian braids.  When the projections $G \to G_S$ split in a compatible way, we show the image of $\rho_\X$ is normal with free abelian quotient, and identify the cohomological finiteness type of $G$. These results apply to some well-studied arrangements, implying several qualitative and residual properties of $\pi_1(M)$, including an alternate proof of a result of Artal, Cogolludo, and Matei on arrangement groups and Bestvina-Brady groups, and a dichotomy for a decomposable arrangement \A: either $\pi_1(M)$ has a conjugation-free presentation or it is not residually nilpotent. 
\end{abstract}

\keywords{
arrangement, 
discriminantal, decomposable, pure braid group, 
Brunnian braid, subdirect product, cohomological finiteness type
}
% \PACS{PACS code1 \and PACS code2 \and more}
\subjclass[2010]{
20F36, %Braid groups; Artin groups
32S22, %Relations with arrangements of hyperplanes   
52C35, %Arrangements of points, flats, hyperplanes
55R80%Discriminantal varieties, configuration spaces
}

\maketitle

\setcounter{tocdepth}{2}
\tableofcontents

\section{Introduction}
Let $\A = \{H_1,\ldots,H_n\}$ be an arrangement of affine hyperplanes in $\C^{\ell}$, with complement $M=M(\A)=\C^\ell - \bigcup_{i=1}^n H_i$. Suppose $\F=\{f_1, \ldots, f_\mu\}$ is a set of complex-valued, continuous functions on $M$ whose pairwise differences are nowhere-zero on $M$. Then the function $f=(f_1, \ldots, f_\mu) \colon M \to \C^\mu$ maps $M$ into the complement $PB_\mu=M(\A_\mu)$ of the braid arrangement $\A_\mu$ consisting of the diagonal hyperplanes $z_i=z_j$, $1 \leq i<j \leq \mu$, with complement  the configuration space of $\mu$ distinct ordered points in $\C$. The fundamental group of $M(\A_\mu)$ is isomorphic to the pure braid group on $\mu$ strands $P_\mu$. The projection $\C^{\mu + k} \to \C^\mu$ onto the first $\mu$ coordinates restricts to a fiber-bundle projection $M(\A_{\mu+k}) \to M(\A_\mu)$, with fiber the ordered configuration space of $k$ points in a plane with $\mu$ punctures \cite{FN,Bi75}, realized as the complement of the affine discriminantal arrangement $\A_{\mu, k}$ in $\C^k$ \cite{SV}. The induced map $P_{\mu+k} \to P_\mu$ corresponds to deletion of the last $k$ strands. 
When $k=1$, the monodromy of this bundle is the restriction of the Artin representation to $P_\mu$. 
The $k=2$ case plays a role in the description of the Lawrence-Krammer-Bigelow (LKB) representation of the full braid group in \cite{PP02,Pa09}. 

A collection of functions \F\ as above yields a pullback diagram of bundles:
\[
\xymatrix@1{
E_k(\A,\F)\ar[r]\ar[d] & M(\A_{\mu+k}) \ar[d] \\
M\ar[r]^-{f}& M(\A_\mu). 
} 
\]
We call the bundle $E_k(\A,\F) \to M$ a {\em discriminantal bundle}. Its fiber is again the complement of the affine discriminantal arrangement $\A_{\mu, k}$ in $\C^k$. The set of functions \F\ is called a {\em generating set} for the discriminantal bundle. We establish general properties of discriminantal bundles in Section 2.1.  
For example, with a generating set of the form $\F=\{a_iz+b_i\bar{z}\}_{i=1}^n$, $a_i,b_i\in \C$, one can realize the complement of any arrangement of $2$-planes in $\R^4=\C^2$ as the total space of a discriminantal bundle  $E_1(\A,\F) \to \C^*$.

If \F\ consists of linear functions, the bundle $E_1(\A,\F) \to M$ is a strictly linear fibration over $M$ (with punctured plane fiber) as in the definition of fiber-type arrangements \cite{FR2}. In this case our construction coincides with the root map construction of \cite{CS2,Coh01}, used to produce the braid monodromy presentation of the fundamental group of the total space.  The fact that bundles involving strictly linearly fibered arrangements 
may be realized as pullbacks of configuration space bundles was established in \cite{Coh01}.  This was used to show that fundamental groups of complements of  fiber-type arrangements are linear in \cite{CCP07}.

The spaces $E_k(\A,\F)$ initially arose as spaces of arrangements with fixed combinatorial type, with the maps $E_k(\A,\F)\to M$ giving parametrizations -- see Remark~\ref{rem:r2}. The description in terms of generating sets allows one to produce representations of $\pi_1(M)$ directly from representations of the (pure) braid group, including the LKB representation, by pulling back along the homomorphism $f_* \colon \pi_1(M) \to P_\mu$. For the resulting representation to be faithful it is necessary that the monodromy around each hyperplane $H \in \A$ be nontrivial; in this case we say \F\ is {\em supported on} \A. This condition imposes strong restrictions on the underlying arrangement.  In Section 2.1 we exhibit generating sets supported on reflection arrangements of types $A_\ell$, $B_\ell$, $D_\ell$, $F_4$, and in Section 2.2 we construct generating sets supported on arbitrary rank two arrangements, and on those rank three arrangements supporting multinet structures with three classes \cite{FY07,Marco09}.   

For general arrangements we obtain bundles with nontrivial monodromy around each hyperplane by considering the product of bundles obtained from generating sets supported on members of a covering family \X\ of subarrangements of \A.  In the primary applications, \X\ is a covering family of rank two flats. For $S\in \X$,
with $\A_S \subset\A$ the subarrangement of hyperplanes containing $S$, 
one has an inclusion $M \subseteq M(\A_S)$. The induced homomorphism of fundamental groups has the effect of killing 
generators corresponding to the hyperplanes 
which do not contain 
$S$. To determine when the resulting representations may be faithful, we study the kernel of the homomorphism 
\[
\rho_\X \colon G \to \prod_{S \in \X} G_S,
\]
where $G$ is an arbitrary group with fixed finite generating set $Y$, $\X \subseteq 2^Y$ is a set of pairwise incomparable subsets of $Y$, and, for $S \in \X$, $G_S=G/\langle\langle Y-S\rangle\rangle$ is the quotient of $G$ by the normal subgroup $\langle\langle Y-S\rangle\rangle$ generated by $Y -S$. 

When $G=P_{\mu+1}$ is the pure braid group 
and $\rho_\X$ is the product of deletion maps $P_{\mu+1} \to  P_\mu$, one for each strand, the kernel of $\rho_\X$ is  
the group of Brunnian braids on $\mu+1$ strands -- see Example~\ref{ex:stanford}. In Theorem~\ref{thm:kernel} we extend an argument of T.~Stanford \cite{Stanf99} to the general setting described above, proving that the kernel of $\rho_\X$ is generated by iterated commutators of generators and their inverses, with the property that some entry or its inverse lies outside of $S$, for every $S \in \X$. The argument requires that the composite 
\[
\langle S \rangle \hookrightarrow G \to G_S
\]
is an isomorphism, for $S$ an intersection of elements of \X\ and for $S=\{y\}$, $y \in Y$. In this case \X\ is called a {\em retractive family} (relative to $G$ and $Y$). We establish some general sufficient conditions for retractive families, for application to arrangement groups -- in particular if $G_S$ is isomorphic to the free group $F_{|S|}$, or to $\Z \times F_{|S|-1}$, then $S$ is retractive. 

For $G=\pi_1(M)$, the set $Y$ is a {\em standard set of generators}, that is, a generating set of meridians, one for each hyperplane. The family $\X \subseteq 2^Y$ is identified with a subset of $2^\A$. To be retractive, it is necessary that \X\ be a set of flats of \A, corresponding to elements of the intersection lattice. We fix \X\ and vary the standard set of generators $Y$ so that \X\ corresponds to a retractive family relative to $G$ and $Y$. In this case we say $Y$ is {\em adapted to} $\X$. Families of modular flats are always retractive, and our result specializes to Stanford's description of Brunnian braids in terms of commutators -- see Example~\ref{ex:stanford}. A rank two flat $X$ is retractive if and only if the corresponding local commutator relations (the defining relations of $G_X$) are valid in the group $G$. This can be ensured for any single flat $X$ by choosing $Y$ appropriately; our results apply when \A\ is covered by a set of flats with this property, for some fixed standard set of generators $Y$. This is the case when \A\ has a ``conjugation-free" presentation, more precisely, when $G$ is a cyclically-presented hyperplane group \cite{Fried16} -- see Remark~\ref{rem:cycpres}.

Using the description of $\ker\left(\rho_\X \colon G \to \prod_{S \in \X} G_S\right)$ provided by Theorem \ref{thm:kernel}, 
we obtain in Theorem~\ref{thm:injtest}  a sufficient condition for $\rho_\X$ to be injective, for a retractive family \X, that involves only commutators of length two and three, and can often be verified by hand. In 
Section~3.4, 
we verify this condition for several well-studied arrangements, showing $\rho_\X$ is injective. As a consequence, for these examples, $G$ is isomorphic to a subgroup of a product of free groups, implying $G$ is residually free and has several other properties of interest -- see Theorem~\ref{thm:properties}.

If \A\ is a central arrangement, the complement $M$ is invariant under the diagonal $\C^\times$ action, and the quotient map $M \to M/\C^\times=:\bar{M}$ is a trivial $\C^\times$-bundle. The space $\bar{M}$ is the complement of the arrangement $\bar{\A}$ of projective hyperplanes $H/\C^\times$ in $\P^{\ell-1}(\C)$. One can choose a standard set of generators so that their product $z$ is central in $\pi_1(M)$. Then the {\em projective arrangement group} $\bar{G}=\pi_1(\bar{M})$ is isomorphic to $G/\langle z \rangle$.

Let \X\ be a set of pairwise incomparable flats of \A, and let $\Lambda_\X$ be the bipartite graph with vertex set $\A \cup \X$ and edges $\{H,X\}$ for $X \in \X$ and $H \in \A_X$. The homomorphism $\rho_\X$ induces a well-defined homomorphism $\bar{\rho}_\X \colon \bar{G} \to \prod_{S \in \X} \bar{G}_S$, with target a product of free groups. We show the image of $\bar{\rho}_\X$ is normal in $\prod_{S \in \X} \bar{G}_S$, provided $\Lambda_\X$ has no four-cycles, and we identify the cokernel of $\bar{\rho}$ with the first cohomology group $H^1(\Lambda_\X,\Z)$ of  $\Lambda_\X$. Since this group is free abelian, $\bar{\rho}(\bar{G})$ is the kernel of a homomorphism from a product of free groups to a free abelian group, that surjects onto each factor, that is, it is a subdirect product of free groups in the sense of \cite{BriMi09}. Then the results of \cite{MMV98} can be applied to determine the homological finiteness type of $\bar{\rho}(\bar{G})$, and of $\bar{G}$ when $\bar{\rho}$ is injective: if $\Lambda_\X$ is connected, then $\bar{\rho}(\bar{G})$ is of type $F_{m-1}$ but not $F_m$, where $m=|\X|$ -- see Theorem~\ref{type}. 

This result informs on a number arrangements of interest, including the 
rank three whirl arrangement, labelled $X_3$ in \cite{FaRa86}. Here, the group
$\bar{G}(\A)$ is isomorphic to the Stallings group of type $F_2$ but not of type $F_3$ \cite{Stall63}, as originally observed by Matei and Suciu \cite{MatSuc} -- see Example~\ref{ex:arvola3}. This observation provided motivation for the current project; 
see also Question 2.10 in Bestvina's problem list \cite{Best04}. 
The $X_3$ example was generalized by Artal, Cogolludo, and Matei \cite{ACM15} to a large family of arrangements whose groups are Bestvina-Brady groups. In Theorem~\ref{thm:bestbrad} we reproduce their result using our approach.

Failure of the retractive property for a particular rank two flat is reflected in the lower central series ranks of $G$. In the final section we exploit this phenomenon to establish a dichotomy for decomposable arrangements. This is a class of arrangements for which the lower central series factors are generated by local contributions. 
In Section~\ref{subsec:decomp} we prove that a decomposable arrangement group either has a standard generating set adapted to the set of all rank two flats, a type of ``conjugation-free presentation," or is not residually nilpotent.

\section{Discriminantal bundles} 

In this section we will construct a bundle $E_k(\A,\F) \to M(\A)$ over the complement of an arbitrary complex arrangement \A, depending on a positive integer $k$ and a family \F\ of functions on $M(\A)$ called a generating set. The total space can be identified with the complement of a family of extensions of \A, parametrized by the complement of \A\ via the generating set \F. 

The construction mimics the process of constructing the pure braid space for ${\ell + k}$ strings from the space for $\ell$ strings. The pure braid space $PB_\ell$ is the complement in $\C^\ell$ of the braid arrangement, defined by the polynomial $\prod_{1\leq i < j\leq \ell} (z_i-z_j) =0$, with fundamental group the pure braid group $P_\ell$.
Then $PB_{\ell +k}=\{(z_1,\ldots,z_{\ell},z_{\ell+1},\ldots,z_{\ell+k})\mid z_i \neq z_j\}$. 
To see how this arises from $PB_\ell$, let  $w_1\ldots, w_k$ be the last $k$ variables.
 Then 
\[
PB_{\ell+k}=
\{(z_1,\ldots,z_{\ell},w_{1},\ldots,w_{k})\mid z_i \neq z_j,z_i \neq w_j,w_i \neq w_j\}.
\]
We have introduced new variables, and prohibiting those variables from being equal or from taking values given by the ``generating set" $\F=\{z_1, \ldots, z_\ell\}$. The Fadell-Neuwirth bundle $PB_{\ell+k} \to PB_\ell$ is given by the projection $(z_1, \ldots, z_\ell, w_1, \ldots, w_k) \mapsto (z_1, \ldots, z_\ell)$, see \cite{FN}.  To generalize, we replace \F\ with an arbitrary collection of complex-valued continuous functions on $M$, whose pairwise differences are nowhere zero on $M$; these conditions ensure that the resulting projection is a bundle map.

\subsection{Construction of discriminantal bundles} 
%Fix a hyperplane arrangement 
Fix an arrangement of hyperplanes 
$\A=\{H_1,\ldots, H_n\}$  in $\C^\ell$ with  
$H_i$ the zero locus of the 
(possibly nonhomogeneous) 
linear polynomial $\alpha_i\colon \C^\ell \to \C$ 
for $1\leq i\leq n$. Let $Q=\prod_{i=1}^n \alpha_i$ and $M=M(\A)=\C^\ell - \bigcup_{i=1}^n H_i=\{(z_1\ldots, z_\ell) \in \C^\ell \mid Q(z_1,\ldots, z_\ell)\neq 0\}$.

\begin{definition}
A set $\F=\{f_1,f_2,\ldots,f_{\mu}\}$ is called a \emph{generating} set for the arrangement $\A$ provided that each $f_i$ is a complex-valued continuous
function on $M$ and each difference $f_i - f_j, \ 1\leq i<j\leq \ell$ is nowhere zero on $M$.
\end{definition}

So the coordinate functions $f_i(z)=z_i, \,1\leq i \leq \ell$, define a generating set for the braid arrangement in $\C^\ell$. Functions in a generating set need not be linear; in most examples the $f_i$ are rational functions on $\C^\ell$, regular on $M$. In this case \F\ forms a generating set for  \A\ if and only if the closures of irreducible components of the (possibly non-reduced) quasi-affine algebraic hypersurfaces defined by $f_i(z)=f_j(z)$ are hyperplanes of \A, for each $i\neq j$. For example, the functions $f_i(z)=z_i^{-2}$, $i=1,2$, form a generating set for the arrangement $\{z_1=0,z_2=0,z_1\pm z_2=0\}$.

Let $\F=\{f_1,\ldots,f_{\mu}\}$ be a generating set for $\A$ and let $k\geq 1$. For $z=(z_1, \ldots, z_\ell) \in M(\A)$ and $w=(w_1, \ldots, w_k) \in \C^k$, let 
\[
Q(z,w)=\prod_{\substack{1\leq i\leq k \\ 1\leq j\leq \mu}} (w_i - f_j(z)) \prod_{1 \leq i<j \leq k}(w_i-w_j),
\]
and define the topological space $E_k(\A,\F)$ by

\[
E_k(\A,\F)=\{(z,w) \in \C^\ell \times \C^k \mid z \in M(\A) \ \text{and} \ Q(z,w) \neq 0\}.
\] 

\begin{definition} The {\em discriminantal bundle} associated with \F\ and $k$ is the natural projection $p \colon E_k(\A,\F) \to M(\A)$.
\end{definition}

The terminology is justified by observations to follow. 
If the $f_i$ are meromorphic on $\C^\ell$, then the complement of $E_k(\A,\F)$ is an analytic hypersurface in $\C^{\ell+k}$, so $E_k(\A,\F)$ is a Stein manifold, hence has the homotopy type of a complex of dimension at most $\ell+k$.

By the discussion above, $PB_{\ell+k}=E_k(\A_\ell ,\F)$ where $\A_\ell$ is the braid arrangement in $\C^\ell$ and $f_i(z)=z_i$ for $1\leq i\leq \ell$. The projection $p \colon E_k(\A_\ell,\F) \to M(\A_\ell)$ is the Fadell-Neuwirth bundle $PB_{\ell+k} \to PB_\ell$.  In general we have the following result, generalizing \cite[Thm. 1.1.5]{Coh01}, which treats generating sets of  linear functions.

\begin{theorem}  \label{thm:pullback} 
If $\A$ is an arrangement with generating set $\F=\{f_1,\ldots,f_{\mu}\}$ and $k\geq 1$, 
then the 
projection $p\colon E_k(\A,\F) \rightarrow M(\A)$ is the projection map of a fiber bundle.  This bundle is the pullback of the Fadell-Neuwirth bundle $PB_{\mu +k} \rightarrow PB_{\mu}$ via the function $f = (f_1, \ldots , f_{\mu}) : M(\A) \rightarrow PB_{\mu}$.
\end{theorem}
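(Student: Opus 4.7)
The plan is to identify $E_k(\A,\F)$ directly with the pullback set, and then invoke the classical fact that pullbacks of fiber bundles along continuous maps are again fiber bundles.

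First I verify that $f=(f_1,\ldots,f_\mu)$ is a well-defined continuous map from $M$ to $PB_\mu$. Since $\F$ is a generating set, each $f_i$ is continuous on $M$ and each difference $f_i-f_j$ is nowhere zero on $M$; hence $f(z)\in PB_\mu=\{(u_1,\ldots,u_\mu)\in \C^\mu\mid u_i\neq u_j \text{ for } i\neq j\}$ for every $z\in M$, and $f$ is continuous.

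Next I form the pullback of the Fadell-Neuwirth bundle $q\colon PB_{\mu+k}\to PB_\mu$ along $f$, namely
\[f^{*}(PB_{\mu+k})=\{(z,(u,w))\in M\times PB_{\mu+k}\mid f(z)=u\},\]
with projection onto the first factor. The assignment $(z,w)\mapsto (z,(f(z),w))$ gives a homeomorphism from
\[\{(z,w)\in M\times \C^k\mid w_j\neq f_i(z)\text{ for all }i,j,\ \text{and }w_i\neq w_{i'}\text{ for }i\neq i'\}\]
onto $f^{*}(PB_{\mu+k})$, with inverse given by forgetting the middle coordinate. Unwinding the defining equation of $V$ (and noting that $Q(z)\neq 0$ on $M$, so the factor $Q$ contributes nothing to the vanishing locus inside $M\times\C^k$), this domain is precisely $E_k(\A,\F)$, and the homeomorphism intertwines the projection $p$ with the pullback projection.

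Finally, the Fadell-Neuwirth theorem asserts that $q$ is a locally trivial fiber bundle, with fiber the ordered configuration space of $k$ points in $\C\setminus\{u_1,\ldots,u_\mu\}$. Since the pullback of a fiber bundle along a continuous map is again a fiber bundle with the same fiber, $p\colon E_k(\A,\F)\to M(\A)$ is a fiber bundle whose fiber is the complement of the discriminantal arrangement $\A_{\mu,k}$. I do not anticipate a real obstacle here; the content of the statement is essentially a correct parsing of the defining equations for $V$, after which local triviality is inherited from Fadell-Neuwirth. The only point that requires any care is the bookkeeping of the factor $Q(z)$ inside the equation defining $V$, to see that excising it indeed yields the pullback over $M$ rather than over $\C^\ell$.
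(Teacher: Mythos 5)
Your proof is correct and follows essentially the same route as the paper: both identify $E_k(\A,\F)$ with the pullback of the Fadell--Neuwirth bundle via the explicit map $(z,w)\mapsto(z,f(z),w)$ and then appeal to the fact that pullbacks of fiber bundles are fiber bundles. Your version merely spells out a few details the paper leaves as ``readily checked,'' such as why $f$ lands in $PB_\mu$ and how the factor $Q(z)$ in the defining equation of $V$ is accounted for over $M$.
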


\begin{proof} Let $z=(z_1, \ldots ,z_{\ell})$  and  $(x_1, \ldots ,x_\mu,w_{1}, \ldots , w_{k})$ be coordinates on $\C^\ell$ and $\C^{\mu+k}$,  respectively.  Then the total space of the pullback of $PB_{\mu +k} \rightarrow PB_{\mu}$ via $f$ is the set of all points 
\[
(z_1, \ldots , z_\ell, x_1, \ldots , x_\mu, w_1, \ldots , w_k)\in M(\A) \times PB_{\mu+k}
\]
which satisfy $f_i(z) = x_i, i =1, \ldots, \mu$.  It is readily checked that the map
\[
(z_1, z_2, \dots , z_{\ell}, w_1, \ldots , w_k) \longmapsto (z_1, \ldots , z_\ell, f_1(z), \dots , f_\mu(z),w_1, \ldots , w_k)
\]
from $E_k(\A,\F)$ to the total space of the pullback is a bundle equivalence. %\qed
\end{proof}

The total space $E_k(\A,\F)$ can be interpreted as a space of labelled extensions $\A(z,w)$ of the arrangement \A\ by $k$ distinct hyperplanes, parametrized by complex numbers 
$w_i$, $1\le i\le k$, %$w_1 ,\ldots, w_k$, 
with forbidden values of the $w_i$ given by $f_1(z), \ldots, f_\mu(z)$ depending on $z \in M(\A)$. See Remark~\ref{extspace} for a concrete description in a particular example. 

The function $f = (f_1, \ldots , f_{\mu}) : M(\A) \rightarrow PB_{\mu}$, determined by the chosen labelling of \F, is called a \emph{generating function} for the discriminantal bundle; two generating functions for the same generating set \F\ differ by an self-homeomorpsism of $PB_\mu$, yielding isomorphic pullbacks. 

\begin{definition}  A \emph{discriminantal arrangement} of type $(\mu, k)$ is the arrangement $\A_{\mu,k}$ 
in $\C^k$, with coordinates $(w_1,\dots,w_k)$, 
defined by the polynomial \[\prod_{i,j} (w_i - m_j) \cdot \prod_{i<j}(w_i-w_j)\]
where $m_1, \ldots, m_\mu$ are fixed distinct complex numbers.
\end{definition}

The complement $F_{\mu,k}$ of $\A_{\mu,k}$ 
is the configuration space of $k$ ordered points in $\C-\{m_1,\dots,m_\mu\}$. Different choices of $m_1\ldots, m_\mu$ lead to lattice-isotopic arrangements. Thus the complement of $\A_{\mu,k}$ is determined up to homeomorphism by $\mu$ and $k$ \cite{R1}. We denote the complement of $\A_{\mu,k}$ by $F_{\mu,k}$. The arrangement $\A_{\mu,k}$ is an affine supersolvable arrangement, hence is itself a fiber-type arrangement. In particular $F_{\mu,k}$ is aspherical, see \cite{FR2,T2}.

Since $F_{\mu,k}$ is the fiber of the Fadell-Neuwirth bundle $PB_{\mu +k} \rightarrow PB_{\mu}$, the fundamental group $G_{\mu,k}=\pi_1(F_{\mu,k})$ is a normal subgroup of the pure braid group $P_{\mu+k}$.  Note that $G_{\mu,1}$ is a free group of rank $\mu$.

\begin{proposition}
The fiber of $p \colon E_k(\A,\F) \to M(\A)$ is homeomorphic to $F_{\mu,k}$.  
\end{proposition}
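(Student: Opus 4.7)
My plan is to derive this as an immediate consequence of the theorem just established, namely that $p\colon E_k(\A,\F)\to M(\A)$ is the pullback of the Fadell-Neuwirth bundle $PB_{\mu+k}\to PB_\mu$ along the generating function $f=(f_1,\ldots,f_\mu)$. Since the pullback of a fiber bundle along any continuous map has the same fiber (up to homeomorphism) as the original bundle, and since the fiber of $PB_{\mu+k}\to PB_\mu$ is $F_{\mu,k}$ by definition of the discriminantal complement, the conclusion follows.

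To make the argument self-contained, I would also give the direct computation of $p^{-1}(z_0)$ for a chosen basepoint $z_0\in M(\A)$. By definition of $E_k(\A,\F)$, the preimage $p^{-1}(z_0)$ consists of tuples $(z_0,w_1,\ldots,w_k)\in\{z_0\}\times\C^k$ satisfying $w_j\ne f_i(z_0)$ for all $1\le i\le \mu$, $1\le j\le k$ and $w_i\ne w_j$ for $i<j$. Setting $m_i:=f_i(z_0)$ for $i=1,\ldots,\mu$, the defining condition that $\F$ is a generating set (the differences $f_i-f_j$ are nowhere zero on $M$) ensures that $m_1,\ldots,m_\mu$ are pairwise distinct complex numbers. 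Hence the projection $(z_0,w_1,\ldots,w_k)\mapsto(w_1,\ldots,w_k)$ identifies $p^{-1}(z_0)$ with the complement in $\C^k$ of the discriminantal arrangement $\A_{\mu,k}$ corresponding to these $m_i$, i.e., with $F_{\mu,k}$.

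There is really no substantive obstacle here; the only point worth emphasizing is that the homeomorphism type of $F_{\mu,k}$ does not depend on the choice of basepoint $z_0$, which was already noted in the excerpt by the remark that different choices of $m_1,\ldots,m_\mu$ yield lattice-isotopic arrangements with homeomorphic complements (citing \cite{R1}). Thus the fiber is canonically (up to homeomorphism) $F_{\mu,k}$, completing the proof.
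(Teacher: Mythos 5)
Your proposal is correct and matches the paper's (implicit) argument: the paper states this proposition without proof, as an immediate consequence of the preceding theorem identifying the bundle as the pullback of the Fadell--Neuwirth bundle $PB_{\mu+k}\to PB_\mu$, whose fiber is $F_{\mu,k}$. Your supplementary direct computation of $p^{-1}(z_0)$, using that the generating-set condition forces the $m_i=f_i(z_0)$ to be distinct, is a sound elaboration of the same idea rather than a different route.
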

It is because of this observation that we call these discriminantal bundles.

\begin{corollary} 
The fiber of $p\colon E_k(\A,\F) \to M(\A)$ is an Eilenberg-Mac\,Lane space of type $K(G_{\mu,k},1)$.
\end{corollary}

\begin{remark} The bundle $p \colon E_k(\A,\F) \to M(\A)$ supports a fiber-preserving action of the symmetric group $S_k$, by permutation in the last $k$ variables, so the monodromy representations arising from this bundle are naturally $(\pi_1(M),S_k)$-bimodules.
\end{remark}

Write $E=E_k(\A,\F)$, $M=M(\A)$, and $G=G(\A)=\pi_1(M)$. The bundle map $p \colon E \rightarrow M$ is the restriction of a linear projection.   If the $f_i$ are linear and $k\geq 1$, then the total space is the complement of a hyperplane arrangement $\mathcal E$.  If, in addition, \A\ is central, then the map $p$ is the bundle projection associated with the modular flat \A\ of $\mathcal E$, see \cite{Par00,FP}. If $k=1$ then $p$ is a strictly linear fibration \cite{FR2,T2}, and $f$ is the associated root map as defined in \cite{CS2,Coh01}.
                                                                                                                             
Much of the topology of fiber-type arrangements carries over (but some does not, see Examples \ref{ex:2-plane} and \ref{ex:2-plane2}).  The results below all follow from the characterization of discriminantal bundles as pullbacks of Fadell-Neuwirth bundles, along with standard results for fiber bundles, see \cite{FR2,F4}. 

An arrangement \A\  is a {\em $K(\pi,1)$ arrangement} if $M(\A)$ is aspherical, and is a \emph{rational $K(\pi,1)$ arrangement} if the rational completion of $M(\A)$ is aspherical \cite{F4}. 

\begin{corollary} Let $p\colon E \to M$ be a discriminantal bundle. 
\begin{enumerate}
\item If \A\ is a $K(\pi,1)$ arrangement, then $E$ is apsherical. 
\item The bundle $p \colon E\rightarrow M$ has a section, and the action of $G$ on the fiber is trivial in homology.
\item The group $\pi_1(E)$ is isomorphic to a semidirect product $G_{\mu,k} \rtimes G$. 
\item The homology $H_*(E,\Z)$ is isomorphic to $H_*(M,\Z) \otimes H_*(F_{\mu,k},\Z)$.
\item If \A\ is a rational $K(\pi,1)$ arrangement, then the rational completion of $E$ is aspherical. 
\end{enumerate}
\end{corollary}

If $G=\pi_1(M(\A))$ is an iterated semidirect product of free groups, respectively, an almost-direct product of free groups \cite{FR2,CS5}, then so is $\pi_1(E)$.  In the latter instance, the cohomology ring of $\pi_1(E)$ may be calculated from the almost-direct product structure, see \cite{Coh10}.  Additionally, we have the following, as noted in \cite[Lem. 6.2]{CCP07}.

\begin{corollary}
If $\pi_1(M)$ is linear, then so is $\pi_1(E)$.
\end{corollary}
\begin{proof}
The group $\pi_1(E)$ is a subgroup of the product $\pi_1(M) \times \pi_1(PB_{\mu+k})$, which is linear since both factors are. %\qed
\end{proof}

The fiber $F_{\mu,1}$ of the Fadell-Neuwirth bundle $PB_{\mu+1} \to PB_{\mu}$ is a copy of \C\  with $\mu$ punctures. The monodromy of the bundle is the (faithful) Artin representation $P_\mu=\pi_1(PB_\mu) \to \Aut(F_\mu)$ of the pure braid group in the group of automorphisms of the free group.  
With this identification, $P_\mu$ acts diagonally on $F_{\mu,k}$ for any $k\geq 1$, since the diagonal hyperplanes $w_i=w_j$ are preserved. The bundle $PB_{\mu+k} \to PB_\mu$ is the associated bundle of $PB_{\mu+1} \to PB_\mu$ with fiber $F_{\mu,k}$ with this action of $P_\mu$.
 
\begin{corollary}  The structure group of the bundle $E_k(\A,\F) \rightarrow M(\A)$ reduces to the pure braid group on $\mu=|\F|$ strings, and the bundle is associated with the discriminantal bundle $E_1(\A,\F) \to M(\A)$ corresponding to \F\ and $k=1$ via the diagonal action of $P_\mu$ on $F_{\mu,k}$.
\end{corollary}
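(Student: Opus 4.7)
The plan is to use the previous theorem of the excerpt, which realizes $E_k(\A,\F) \to M(\A)$ as the pullback of the Fadell-Neuwirth bundle $PB_{\mu+k} \to PB_\mu$ along the generating function $f=(f_1,\ldots,f_\mu)$. Since pullback is functorial with respect to both reductions of structure group and the associated-bundle construction, it suffices to prove both assertions for the Fadell-Neuwirth bundles themselves: the pullback then automatically inherits the reduced structure group and the associated-bundle identification over $M(\A)$, sending $E_1(\A,\F)\to M(\A)$ to $E_k(\A,\F)\to M(\A)$.

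Next I would recall that the monodromy of $PB_{\mu+1}\to PB_\mu$ is the faithful Artin representation $P_\mu \to \Aut(F_\mu)$, realized geometrically: each $\phi \in P_\mu$ is represented by a self-homeomorphism (unique up to isotopy) of $F_{\mu,1}\cong \C-\{\mu \text{ points}\}$. This realizes $P_\mu$ as the structure group of $PB_{\mu+1}\to PB_\mu$. I would then define the diagonal action of $P_\mu$ on $F_{\mu,k}$ by $\phi\cdot(w_1,\ldots,w_k)=(\phi(w_1),\ldots,\phi(w_k))$. This is well defined: since $\phi$ is a homeomorphism, $\phi(w_i)=\phi(w_j)$ iff $w_i=w_j$, so the big diagonals in $F_{\mu,1}^k$ are preserved setwise and the action restricts to $F_{\mu,k}\subset F_{\mu,1}^k$.

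The third step is the heart of the matter: to identify $PB_{\mu+k}\to PB_\mu$ with the bundle associated to $PB_{\mu+1}\to PB_\mu$ via this diagonal action. The natural approach is to trivialize $PB_{\mu+k}\to PB_\mu$ over a cover by small contractible open sets $U_\alpha \subset PB_\mu$ via $U_\alpha \times F_{\mu,k}$, and to check that, on overlaps, the transition function induced by a path $\gamma$ in $U_\alpha \cap U_\beta$ is precisely the diagonal extension to $F_{\mu,k}$ of the transition function of $PB_{\mu+1}\to PB_\mu$ over the same overlap. Equivalently, one can observe that $PB_{\mu+k}$ is the $k$-th ordered fiberwise configuration space of $PB_{\mu+1}\to PB_\mu$: a point of $PB_{\mu+k}$ consists of $\mu$ base points together with $k$ distinct additional points in the $\mu$-punctured plane, so the transport of the last $k$ coordinates along a loop in $PB_\mu$ is by definition the diagonal extension of the transport of a single point. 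Pulling back along $f$ then finishes the proof.

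I expect the main obstacle to be the third step: rigorously matching the two cocycles. This can be done by appeal to the general fact that the $k$-th ordered fiberwise configuration space of a fiber bundle $F\to E\to B$ with structure group $G\subset \text{Homeo}(F)$ is again a bundle over $B$ with structure group $G$ acting diagonally on the ordered configuration space of $k$ points in $F$, applied to $PB_{\mu+1}\to PB_\mu$ with $G=P_\mu$ and $F=F_{\mu,1}$. Once this identification is in place, the two conclusions follow immediately, and transfer to $E_k(\A,\F)\to M(\A)$ by pullback along $f$.
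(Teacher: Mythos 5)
Your proposal is correct and follows essentially the same route as the paper, which offers no separate formal proof but instead relies on exactly the discussion you reconstruct: the monodromy of $PB_{\mu+1}\to PB_\mu$ is the Artin representation realizing $P_\mu$ as structure group, the diagonal action preserves $F_{\mu,k}$ because the hyperplanes $w_i=w_j$ are preserved, $PB_{\mu+k}\to PB_\mu$ is the associated bundle (your fiberwise-configuration-space observation), and everything transfers to $E_k(\A,\F)\to M(\A)$ by pullback along $f$. Your cocycle-matching in the third step is a slightly more careful elaboration than the paper provides, but it is the same argument.
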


\begin{definition} Let \F\ be a generating set for \A\ and $H \in \A$. We say \F\ is {\em trivial on $H$} if $f_i-f_j$ extends continuously and is not identically zero on $H$, for all $1\leq i<j \leq \mu$. The {\em support} of \F\ is the set of hyperplanes $H\in \A$ on which \F\ is not trivial.
\end{definition}

The monodromy of the bundle $E_k(\A,\F) \to M(\A)$ may be nontrivial around $H \in \A$  if $H$ is in the support of \F. If $S\subseteq \A$ denotes the support of \F, then \F\ is a generating set for $S$, and $E_k(\A,\F) \to M(\A)$ is a subbundle of $E_k(S,\F) \to M(S)$, the pullback by the inclusion map $M(\A) \hookrightarrow M(S)$.

\begin{example}  Let ${\mathcal D}_\ell$ be the Coxeter arrangement of type $D_\ell$, with defining equations $z_i\pm z_j=0$, $1 \leq i < j \leq \ell$.  Let $\F=\{f_1, \ldots,f_\ell\}$ where $f_i(z)= z_i^2$.  Then \F\ is a generating set on ${\mathcal D}_\ell$, with support ${\mathcal D}_\ell$, and $E_k({\mathcal D}_\ell,\F)$ is the complement of the union of $\ell(\ell-1)+\binom{k}{2}$ hyperplanes and $\ell$ affine quadrics in $\C^{\ell+k}$. In particular, $E_k({\mathcal D}_\ell,\F)$ is not a hyperplane arrangement complement.

The space $E_1({\mathcal D}_3,\F)$ is homeomorphic to an arrangement complement. The group $\pi_1(M({\mathcal D}_3))$ is isomorphic to the pure braid group $P_4$, and the homomorphism $f_*\colon P_4 \to P_3$ induced by $f$ is given on the standard pure braid generators by
\[
f_*(A_{i,j})= \begin{cases}
A_{1,2}&\text{if $\{i,j\}=\{1,2\}$ or $\{i,j\}=\{3,4\}$,}\\
A_{2,3}&\text{if $\{i,j\}=\{2,3\}$ or $\{i,j\}=\{1,4\}$,}\\
A_{1,3}&\text{if $\{i,j\}=\{1,3\}$,}\\
A_{1,2}A_{1,3}A_{1,2}^{-1}&\text{if $\{i,j\}=\{2,4\}$,}\\
\end{cases}
\]
with appropriate choices of basepoints.
This is the homomorphism on pure braid groups induced by the map $B_4 \to B_3$ on full braid groups sending $\sigma_1$ and $\sigma_3$ to $\sigma_1$, and $\sigma_2$ to $\sigma_2$. We showed in \cite{CFR11} that $f_*$ is equivalent to the homomorphism induced by the map $PB_4 \to PB_3$ defined by deleting the fourth strand. If follows that the total space $E=E_1({\mathcal D}_3,\F)$ is homeomorphic to the pull-back of the Fadell-Neuwirth bundle $PB_4 \to PB_3$ along itself; by a result of \cite{FP} this implies $E$ is diffeomorphic to the complement of an arrangement of nine hyperplanes in $\C^4$, the generalized parallel connection of ${\mathcal D}_3$ with itself along a modular line. (All such arrangements are isomorphic.) 

For general $k$ and $\ell$, we do not know whether $E_k({\mathcal D}_\ell,\F)$ has the homotopy type of an arrangement complement.
\label{ex:d3}
\end{example}

\begin{example}  
Let $\B_\ell$ be the Coxeter arrangement of type $B_\ell$, with defining equations $z_i=0$, $1\le i \le \ell$, and $z_i=\pm z_j, 1 \leq i < j \leq \ell$.  The set 
\[
\F=\{-z_\ell,\dots,-z_1,0,z_1,\dots,z_\ell\}
\]
is a generating set for $\B_\ell$, with support $\B_\ell$.  Here $0$ denotes the zero function, and $z_i$ a coordinate function as before. This linear generating set arises from the structure of the projection $M(\B_{\ell+1}) \to M(\B_\ell)$ as a strictly linear fibration. The corresponding generating function $f\colon M(\B_\ell) \to PB_{2\ell+1}$ realizes the bundle $M(\B_{\ell+1}) \to M(\B_\ell)$ as the pullback of the Fadell-Neuwirth bundle $PB_{2\ell+2}\to PB_{2\ell+1}$.  This is used to determine the structure of the type $B$ pure braid group $\pi_1(M(\B_\ell))$ as an almost-direct product of free groups in \cite[Thm. 1.4.3]{Coh01}.

The set $\F'=\{f_1\ldots, f_\ell\}$, where $f_i(z)=z_i^{-2}$ for $1\leq i\leq \ell$, is another generating set for the arrangement $\B_\ell$, with support the entire arrangement $\B_\ell$. Note that the hyperplanes $z_i=0$ and $z_j=0$ are poles of $f_i-f_j$, of multiplicity two.

Similar considerations apply to the reflection arrangements associated to the full monomial groups, with defining equations 
$z_i=0$, $1\le i \le \ell$, and $z_i=\zeta^k z_j$, $1 \leq i < j \leq \ell$, $1\le k\le r$, where $\zeta$ is a primitive $r^{\text{th}}$ root of unity.
\label{ex:recip}
\end{example}

\begin{example}[\cite{Bries73}] For the reflection arrangement of type $F_4$, with hyperplanes $z_i=0$, $1 \leq i \leq 4$, $z_i \pm z_j=0$, $1 \leq i<j \leq 4$, and $z_1 \pm z_2 \pm z_3 \pm z_4 =0$, the functions  
$f_i(z)=(z_1z_2z_3z_4)z_i^2$, $1 \leq i \leq 4$, comprise a generating set.
\end{example}

\begin{remark} Given a generating set \F\ as in the preceding examples, one can consider the monodromy of the bundle $p \colon E_2(\A,\F) \to M$, acting on the twisted homology group $H_2(F_{\mu,2}, R)$, where $R=\Q(s,t)$ is the $\pi_1(F_{\mu,2})$-module with the generators corresponding to hyperplanes $w_i-m_j=0$ acting by multiplication by $s$, and the generator corresponding to $w_1-w_2=0$ acting by multiplication by $t$.
The corresponding homology representation is an analogue of the Lawrence-Krammer-Bigelow  representation of the pure braid group for the arrangement group $G(\A)$; see \cite{Pa09}. We will call this the {\em LKB-type representation} of $G(\A)$ associated with $\F$. For the braid arrangement $\A_\ell$, with generating set $\F=\{z_1, \ldots, z_\ell\}$, this representation is indeed rationally equivalent to the Lawrence-Krammer-Bigelow representation of the pure braid group, by \cite{PP02}. For the examples above we do not know if the resulting representations are rationally equivalent to the LKB-type representations of the corresponding pure Artin groups as described in \cite{Pa09}. We did not examine whether Coxeter arrangements of types $E_n$, $n=6,7,8$, support generating sets.
\label{rem:lkb}
\end{remark}

\begin{example} \label{ex:2-plane}
Let \A\ be the arrangement consisting of the origin in $\C$. The set $\F=\{0,z,2\bar{z}\}$ is a generating set for \A. The total space of the discriminantal bundle $E_1(\A,\F)$ is the complement in $\C^2=\R^4$ of the real linear subspaces $z=0, w=0, w=z$, and $w=2\bar{z}$. This complement of four 2-planes in $\R^4$ does not have the homotopy type of the complement of a complex hyperplane arrangement, by \cite[Ex. 2.2]{Z5}. Furthermore, the group $G=\pi_1(E_1(\A,\F))$ is not the fundamental group of any smooth quasi-projective variety, see \cite[Ex. 8.1]{DPS08a}
\end{example}

\begin{example} \label{ex:2-plane2}
Again, let $\A=\{0\}$ in $\C$.
The set 
$\F=\{z,\frac{1}{2}z,
\frac{3}{8}z+\frac{5}{8}\bar{z}, -\frac{1}{12}z+\frac{5}{12}\bar{z}\}$
is a generating set for $\A$. 
Here, the space $E_1(\A,\F)$ is the complement in $\C^2=\R^4$ of the five 2-planes $z=0$, $w=z$, $2w=z$, $8w=3z+5\bar{z}$, and $12w=-z+5\bar{z}$.  
Noting 
that the equations $az+bw+c\bar{w}=0$ and $\alpha z+\beta\bar{z}+\gamma w=0$ define the same 2-plane if $\alpha(b^2-c^2)=\gamma a b$ and $\beta(b^2-c^2)=-\gamma a c$, this arrangement is identical to the arrangement considered in \cite[Ex. 8.2]{DPS08a}. There, it is shown that $G=\pi_1(E_1(\A,\F))$ is not a 1-formal group, and consequently that $E_1(\A,\F)$ is not a formal space.
\end{example}

The observation in the previous example may be used to establish the following.

\begin{proposition}
The complement of any 
arrangement of 2-planes through the origin
in $\C^2=\R^4$ may be realized as $E_1(\A,\F)$ for an appropriate choice of generating set $\F$ for the arrangement $\A=\{0\}$ in $\C$.
\end{proposition}

\subsection{Existence of generating sets}
\label{subsec:existence}
To construct faithful representations of the arrangement group $G(\A)=\pi_1(M)$, we need to know which subarrangements of \A\ support generating sets. It turns out the conditions are somewhat restrictive. But one can always construct generating sets supported on rank two subarrangements.

For an arrangement \A\ of affine hyperplanes in $\C^\ell$ and a subset $X$ of $\C^\ell$, let
\[
\A_X=\{H \in \A \mid H \supseteq X\}.
\]
The (finite) collection $\L(\A)=\{\A_X \mid X \subseteq \C^\ell\}\subseteq 2^\A$ forms a poset under inclusion; its elements are called {\em flats} of \A. The poset $\L(\A)$ has smallest element $\emptyset=\A_{\C^\ell}$ and largest element $\A=\A_\emptyset$. To avoid confusion we will call $\L(\A)$ the {\em poset of flats} of \A; it is an atomic lattice which is geometric if and only if $\bigcap_{H \in \A} H \neq \emptyset$, in which case we say \A\ is {\em central}. The poset of flats of a central arrangement is the lattice of flats of a simple matroid on \A. 

If $X \neq \emptyset$, the arrangement $\A_X$ is called the {\em localization} of \A\ at $X$. In any case we can assume without loss of generality that $X=\bigcap_{H \in \A_X} H$. Then $\A_X \subseteq \A_Y$ if and only if $X \supseteq Y$, and the mapping $\A_X \mapsto \bigcap_{H \in \A_X} H = X$ defines an isomorphism of $\L(\A)$ to the {\em (augmented) intersection poset}  $\LL(\A)$ of \A, the set of intersections of subsets of \A\ ordered by reverse inclusion. 
Observe that, for $\A$ non-central, the intersection poset $\LL(\A)$ has largest element $\emptyset$, contrary to the convention of  \cite{OT92}.
 The posets $\LL(\A)$ and $\L(\A)$ are ranked. A flat $\S\subseteq\A$ in $\L(\A)$ has rank $\rk(\S)$ equal to the codimension of $\bigcap_{H \in \S} H$. Similarly $X\in\LL(\A)$ has $\rk(X)=\codim(X)$. In particular, we have $\rk(X) = \rk(\A_X)=\codim(X)$ for $X \in \LL(\A)$ corresponding to $\A_X\in\L(\A)$.

The proof of the following result gives an indication of our original ideas for producing fibered families of hyperplanes, as discussed in Remark \ref{rem:r2} below. 

\begin{proposition} Let \A\ be an arbitrary arrangement and $X \in \LL(\A)$ with $\rk(X)=2$. Then there is a generating set for \A\ of size $|\A_X|$ supported on $\A_X$.
\label{prop:rank2gen}
\end{proposition}

\begin{proof} We may label the hyperplanes of \A\ so that $\A_X=\{H_1,\ldots, H_\mu\}, \, \mu=|\A_X|$. Since $H_1$ and $H_2$ are distinct we may choose coordinates so that they are given by $z_1=0$ and $z_2=0$, respectively. Then, since $X$ has codimension two, there are distinct, nonzero complex numbers $m_3, \ldots, m_\mu$, so that $H_i$ is defined by the equation $z_2=m_iz_1$. 
Define $\F_X=\{f_1, \ldots, f_\mu\}$, where 
\[
f_i(z) = \begin{cases}
0, & i=1 \\
z_2, & i=2 \\
m_iz_1,& 3 \leq i \leq \mu,
\end{cases}
\]
for $z \in M$, and observe that $\F_X$ is a generating set for \A\ with support equal to $\A_X$ and $|\F_X|=\mu$. %\qed 
\end{proof}

Abusing terminology slightly, we will call a generating set $\F_X$ constructed as in the proof of Proposition~\ref{prop:rank2gen} a {\em canonical} generating set for $X\in\LL(\A)$ with $\rk(X)=2$; it is determined by $X$ up to choice of coordinates.

\begin{remark}\label{rem:r2} For $X\in\LL(\A)$ with $\rk(X)=2$ and $\F_X$ a canonical generating set for $X$, a point $(z,w)$ in the space  $E_k(\A, \F_X)$  can be identified with the extension of \A\ by a pencil of $k$ distinct hyperplanes containing the codimension-two subspace $z+X$, with defining equations 
\[
x_2-z_2=w_i(x_1-z_1), 1 \leq i \leq k,
\]
 none of which are parallel to any of the hyperplanes in $\A_X$, with $z \in M$. Consequently, $E_k(\A,\F_X)$ is isomorphic to the space of all extensions of \A\ having a certain fixed intersection poset. Our work was motivated in part by the desire to understand the topology of ``nicely-behaved" extension spaces such as this.
\label{extspace}
\end{remark}

Generating sets supported on rank three flats are related to multinets (or combinatorial pencils) on projective line arrangements, as studied in \cite{Marco09,FY07}.

\begin{definition} A $(k,d)$-{\em multinet} on a rank three central arrangement \A\ consists of a function $m\colon \A \to \Z_{>0}$ and a  partition $\PPP$ of \A\ with $k\geq 3$ blocks, with the associated {\em base locus} $\X$ being the set of rank two flats not contained in parts of \PPP, satisfying

\begin{enumerate}
\item for each $\pi \in \PPP$, $\sum_{H \in \pi} m(H)=d$;
\item for each $x \in \X$, $\sum_{H \in \pi, x \in H} m(H)$ is independent of $\pi\in \PPP$;
\item $(\bigcup_{H\in \pi} H) - \X$ is connected for each $\pi \in \PPP$.
\end{enumerate}
\end{definition}

By results of \cite{Marco09,FY07}, \A\ supports a $(k,d)$-multinet structure if and only if there is a linear system of curves of degree $d$ on $\CP^2$ with no fixed components having $k$ completely reducible fibers whose irreducible components are the projectivizations of the hyperplanes in \A. By results of \cite{PerYuz07}, such pencils exist only for $k \leq 4$.

\begin{theorem} Let \A\ be a rank three arrangement supporting a $(3,d)$-multinet for some $d \geq 1$. Then there is a generating set $\{f_1,f_2,f_3\}$ with support equal to \A. 
\end{theorem}

\begin{proof} By \cite{FY07}, there are pairwise relatively prime, homogeneous, completely reducible polynomials $Q_i$, $1 \leq i \leq 3$, of degree $d$, satisfying 
\[
\{z \in \C^\ell \mid Q_1(z)Q_2(z)Q_3(z)=0\}=\bigcup_{H \in \A} H, 
\]
and $Q_3=Q_1-Q_2$.
Then $\{0,Q_1,Q_2\}$ is a generating set with support equal to \A. %\qed
\end{proof}

There is a partial converse. Suppose $\{f_1,f_2, f_3\}$ is a generating set consisting of homogeneous polynomials of the same degree $d$, with support $\A$. If the linear system generated by $f_1-f_2$ and $f_2-f_3$ has no fixed components and connected general fiber, then \A\ supports a $(3,d)$-multinet structure.

Any set $\F=\{f_1,\ldots, f_\mu\}$ of $\mu$ distinct linear forms is a generating set whose support \A\ has defining polynomial $\prod_{1\leq i<j\leq \mu}(f_i-f_j)$. In this case, as observed earlier, $E_k(\A,\F)$ is the complement of an arrangement $\mathcal E$, which contains \A\ as a modular flat, and $E_k(\A,\F) \to M(\A)$ is the associated bundle projection. 

\begin{example}
If \F\ consists of the coordinate functions $\{z_1,\ldots, z_\ell\}$ then its support  \A\ is the braid arrangement. If \F\ consists of the natural defining forms $z_i-z_j$ for the braid arrangement, then \A\ is the $p=2$ center-of-mass arrangement defined in \cite{CohKam07}, whose complement parametrizes the  labelled configurations of $\ell$ distinct points in $\R^2$ with pairwise distinct midpoints. More generally, if \F\ consists of the natural defining forms $\sum_{k=1}^p z_{i_k} - \sum_{k=1}^p z_{j_k}$, for the $p$-fold center-of-mass arrangement on $\ell$ points, then the supporting arrangement \A\ is the $2p$-fold center-of mass arrangement on $\ell$ points.
\end{example}

If $f_1, \ldots, f_\mu$ are distinct linear forms, then the set $\F=\{\frac{1}{f_1}, \ldots, \frac{1}{f_\mu}\}$ of reciprocals is also a generating set, whose support is the arrangement defined by $$\prod_{1\leq i \leq \mu} f_i \cdot \prod_{1\leq i<j\leq \mu}(f_i-f_j).$$ 

\section{Products of localization homomorphisms}
\label{sec:prods}

Given an arbitrary arrangement \A\ of rank at least two, we would like to build a bundle with base $M(\A)$ which is sufficiently twisted to yield a faithful representation of $\pi_1(M)$. For \A\ itself to support a discriminantal bundle requires fairly special circumstances, as we have seen, but \A\ may have several proper subarrangements supporting such bundles. Indeed, any rank two subarrangement, and any rank three subarrangement supporting a multinet, will have that property. We propose to pull back the product of all such discriminantal bundles supported on subarrangements, to obtain a bundle over $M(\A)$. 

If \D\  denotes the set of subarrangements of \A\ supporting generating sets, let
$$\phi_\D  \colon M(\A)  \to \prod_{S\in \D} M_S$$ be the product of inclusion maps, where $M_S$ is the complement of the hyperplanes in $S\subset \A$. Note that the codomain is also an arrangement complement. Choosing a generating set $\F_S$ of size $\mu_S$ and a positive integer $k_S$ for each $S\in \D$, we have discriminantal bundles $E_{k_S}(S,\F_S) \to M_S$, and hence a product bundle $$\prod p_S \colon \prod_{S\in \D} E_{k_S}(S,\F_S) \to \prod_{S \in \D} M_S.$$ The pullback $\phi_\D^*(\prod p_S)$ gives a bundle over $M(\A)$, with fiber $F=\prod_{S\in \D} F_{k_S,\mu_S}$, and nontrivial monodromy around each hyperplane in $\bigcup _{S \in \D} S$.

To use the product bundle $\phi_\D^*(\prod p_S)$ constructed above to produce faithful representations of $\pi_1(M(\A))$, one might first build faithful representations of $\pi_1(M_S)$ for $S\in \D$, using the monodromy of discriminantal bundles, and then attempt to show that $\phi_\D$ induces an injection on fundamental groups. We can carry out the first step at least in case $S$ comes from a rank two flat.

\subsection{Representations associated to rank two flats} \label{sec:r2}
Fix now a single element $X\in\LL(\A)$ of rank two.  Let $\A_X=\{H \in \A \mid H \supseteq X\}$, and let $M_X$ be the complement of $\A_X$. Let  $i_{X}: \pi_{1}(M) \rightarrow \pi_{1}(M_X)$ be the homomorphism induced by the inclusion $M \hookrightarrow M_X$. 
Let $\F_X=\{f_1,\ldots,f_\mu\}$, $\mu=|\A_X|$, be a canonical generating set associated with $X$, as constructed in the proof of Proposition~\ref{prop:rank2gen}, and denote the associated generating function by $f_X \colon M_X  \rightarrow PB_{\mu}$. 

\begin{proposition}  The induced homomorphism $(f_X)_* \colon \pi_1(M_X) \to P_\mu$ is injective.
\end{proposition}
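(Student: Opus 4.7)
The plan is to reduce the statement to a two-dimensional calculation and then apply a bundle-map argument using a Fadell-Neuwirth fibration.

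After a linear change of coordinates I may assume $\alpha_1=z_1$ and $\alpha_2=z_2,$ so every hyperplane of $\A_X$ is defined by an equation in $z_1,z_2$ alone. Then $M_X$ splits as a product $M'_X\times \C^{\ell-2},$ where $M'_X\subset \C^2$ is the complement of the $\mu$ concurrent lines $z_1=0,$ $z_2=0,$ $z_2=m_i z_1$ for $i=3,\ldots,\mu.$ Since $f$ depends only on $z_1,z_2,$ it factors through the projection $M_X\to M'_X,$ which is a homotopy equivalence. So it suffices to prove that the map $\tilde g\colon M'_X \to PB_\mu$ defined by $(u,v)\mapsto (0,v,m_3 u,\ldots,m_\mu u)$ is injective on $\pi_1.$

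The next step is to exhibit $\tilde g$ as a morphism of locally trivial fibrations. The projection $M'_X \to \C^*,$ $(u,v)\mapsto u,$ is a bundle whose fiber over $u_0$ is the $(\mu-1)$-punctured plane $\C\setminus\{0,m_3 u_0,\ldots,m_\mu u_0\}.$ The Fadell-Neuwirth bundle $PB_\mu\to PB_{\mu-1}$ obtained by forgetting the second strand has the same type of fiber. The map $\tilde g$ intertwines these projections, descending on the base to $u\mapsto(0,m_3 u,\ldots,m_\mu u)$ and restricting on each fiber to the identity $v\mapsto v.$ In particular the fiber map is a homeomorphism and induces an isomorphism on $\pi_1.$

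The only remaining point is to check that the base map $\C^*\to PB_{\mu-1}$ is injective on $\pi_1.$ A generator of $\pi_1(\C^*)=\Z$ traces a loop around the origin; its image is the pure braid that holds the first strand fixed at $0$ and rotates the remaining $\mu-2$ strands one full turn about the origin. A direct linking-number calculation shows that every coordinate of the image of this element in the abelianization $P_{\mu-1}^{\mathrm{ab}}\cong \Z^{\binom{\mu-1}{2}}$ is equal to $1,$ so the element has infinite order and the base map is injective on $\pi_1.$ Applying the five lemma to the resulting ladder of homotopy exact sequences then gives injectivity of $\tilde g_*,$ and hence of $f_*.$ I expect the main technical step to be the explicit identification of the two bundle structures and the verification that $\tilde g$ intertwines them cleanly; once that is in place, the abelianization computation and the five-lemma step are routine.
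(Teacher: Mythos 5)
Your proof is correct and follows essentially the same route as the paper's: both compose $f$ with the Fadell--Neuwirth projection $PB_\mu \to PB_{\mu-1}$ that forgets the second point, and then handle the fiber direction (where the map is essentially the identity on a $(\mu-1)$-punctured plane) and the base direction separately. The only real difference is in the finishing moves --- the paper argues algebraically with generators and the Hopfian property of free groups, and leaves implicit why the induced map $\Z \to P_{\mu-1}$ on the base is injective, a point your linking-number computation in the abelianization makes explicit.
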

\begin{proof}
Assume without loss of generality that $\A_X=\{H_1,\ldots, H_\mu\}$, with $H_1$ is defined by $z_1=0$, $H_2$ by $z_2=0$, and $H_i$ is defined by $z_2=m_iz_1$ for $3\leq i\leq \mu$. Then  $f_X\colon M_X  \rightarrow PB_{\mu}$ is given by $f_X(z)=(0,z_2,m_3z_1, \ldots, m_\mu z_1)$. Let $p\colon PB_\mu \to PB_{\mu-1}$ be the projection $(u_1,\ldots, u_\mu) \mapsto (u_1,u_3, \ldots, u_\mu)$. Then $p$ is a discriminantal bundle projection; in particular the kernel of $p_*$ is the fundamental group of the fiber of $p$, a free group on $\mu-1$ generators.

For $i=1,\ldots, \mu$, let $y_i$ be a loop in $M_X$ about $H_i$.  Choosing the base point in the hyperplane $z_1=1$, we may assume that the loops $y_2, \ldots, y_\mu$ lie in the subspace $z_1=1, z_3=m_3, \ldots, z_\mu=m_\mu$. Then $p_*\circ (f_X)_*$ sends $y_i$ to $1$ for $2 \leq i \leq \mu$. Then any element of the kernel of $(f_X)_*$ lies in $\langle y_2 \ldots, y_\mu\rangle$, a free group of rank $\mu-1$. Moreover, $(f_X)_*$ sends this subgroup to the fundamental group of the fiber of $p$. Then $(f_X)_*$ is injective by the Hopfian property of free groups. %\qed
\end{proof}

Recall the LKB-type representation of $G(\A)$ over $\Q(s,t)$ associated with a generating set \F, defined in Remark~\ref{rem:lkb}.
 
\begin{corollary} The LKB-type representation of $G(\A_X)$ associated with a canonical generating set $\F_X$ for $X$ is faithful.
\label{thm:rank2faith}
\end{corollary}

\begin{proof} Since $p$ is the pullback of the Fadell-Neuwirth bundle $PB_{\mu+2} \to PB_\mu$ via $f_X$, and $(f_X)_*$ is injective, 
this follows from the faithfulness of the LKB representation of the braid group \cite{Big01,Kra02,PP02}.
%\qed
\end{proof}

\subsection{The kernel of \texorpdfstring{$(\phi_\sS)_*$}{}}
\label{subsec:Stan}
Next we consider a general product mapping 
\[
\phi_\sS = \prod_{S\in \sS} i_S \colon M \to \prod_{S \in \sS} M_S,
\]
where \sS\ is an arbitrary set of subarrangements of \A\ and $i_S \colon M \to M_S$ is the inclusion. We pick a base point in $M$ and obtain an induced homomorphism, \[ (\phi_\sS)_* \colon \pi_{1}(M) \rightarrow \prod_{S\in\sS}{\pi_1(M_S)}. \] 
For simplicity, denote $(\phi_\sS)_*$ by $\rho_\sS$ and $(i_S)_*$ by $\rho_S$, so that $\rho_\sS=\prod_{S\in \sS} \rho_S$.  We adopt the conventions $x^y=y^{-1}xy$ and $[x,y]=x^{-1}y^{-1}xy$ for group elements $x$ and $y$, in agreement with \cite{MKS76} and \cite{F3}.

Recall that $\pi_1(M)$ is generated by small loops around the hyperplanes of \A. For each $S\in \sS$, $\rho_S$ kills the generators corresponding to hyperplanes in $\A - S$. For the pure braid group, and a certain sets of flats \sS, this has the effect of deleting strands. So elements in the kernel of the product mapping $\rho_\sS$ are analogous to Brunnian braids, braids that become trivial upon deletion of any strand. 

\begin{example}
Let \A\ be the braid arrangement in $\C^4$, so that $\pi_1(M) = P_4$, the four strand pure braid group.  Denote the pure braid generators by $A_{ij}$, for $1\leq i<j \leq 4$, corresponding to the hyperplanes $H_{ij}$ given by $x_i=x_j$. By considering the projection to $\C^3$ along the $x_4$ axis, we see that the subgroup $U$ generated by $A_{14}, A_{24}$ and $A_{34}$ is a free subgroup on three generators.  

Let $\sS=\{S_{123},S_{124},S_{134},S_{234}\}$ be the set of rank two flats of \A\ of multiplicity greater than two: $S_{ijk}=\{H_{ij},H_{ik}, H_{jk}\}$. Consider the commutator $g=[A_{14},[A_{24},A_{34}]]$.  Then, for every $i,j,k$, $\rho_{S_{ijk}}(g)=1$. Indeed, one of $H_{14}, H_{24}$ or $H_{34}$ lies outside of $S_{ijk}$, hence at least one factor of the commutator is sent to 1 by $\rho_{S_{ijk}}$. Thus $\rho_\sS(g)=1$. Clearly $g \neq 1$, since $g$ is a nontrivial reduced word lying in the free subgroup $U$. Consequently, $\rho_\sS$ is not injective in general. Interpreted as a map on pure braids, the homomorphism $\rho_\sS$ has the effect of deleting, in turn, each of the four strands, Thus $g$ corresponds to a nontrivial Brunnian pure braid on four strands. (The closure of this braid is the Borromean rings link.)
\label{ex:brunnian}
\end{example}

\begin{remark} The argument used in this example generalizes easily to show that $\rho_\sS$ is not injective for the complement of any strictly linearly-fibered arrangement which is not a product, whose fiber is the complement of $k\geq 3$ points in \C, and any set of flats \sS.
\end{remark}

Stanford showed that any braid which becomes trivial upon deletion of the strands outside a set $S \subseteq \{1,\ldots, \ell\}$ is a product of iterated commutators of pure braid generators $A_{ij}$ and their inverses 
at least one of which satisfies $\{i,j\} \not \subseteq S$ -- see Example~\ref{ex:stanford}. Stanford's argument can be cast in a more general setting so as to apply to other groups, including arrangement groups.

We formulate Stanford's notion of monic commutator \cite[Def. 1.3]{Stanf99} in terms of the bracket arrangements of \cite{MKS76}. A {\em bracket arrangement of weight $p$}  is a sequence $\beta^p$ of brackets and asterisks defined recursively as follows:

\begin{enumerate}
\item $\beta^1= [*]$ is the unique bracket arrangement of weight one.
\item For $p>1$, $\beta^p=[\beta^k\beta^\ell]$ for some bracket arrangements $\beta^k$ and $\beta^\ell$ of weights $k\geq 1$ and $\ell \geq 1$, respectively, satisfying $k+\ell=p$. 
\end{enumerate}

For instance, $[*[**]]$ and $[[**]*]$ are the two bracket arrangements of weight 3, where $[*]$ has been abbreviated to $*$. Given a group $G$, an ordered $p$-tuple $(x_1, \ldots, x_p)$ of elements of $G$, and a weight $p$ bracket arrangement $\beta^p$, the element $\beta^p(x_1, \ldots, x_p)$  of $G$ is defined recursively by $\beta^1(x_1)=x_1$, and, for $p>1$ and $\beta^p=[\beta^k\beta^\ell]$ with $p=k+\ell$, $\beta^p(x_1, \ldots, x_p)=[\beta^k(x_1 \ldots, x_k),\beta^\ell(x_{k+1}, \ldots, x_p)]$. 

\begin{definition} Let $G$ be a group with set of generators $Y$. A {\em monic commutator} in $G$, relative to $Y$, is a nontrivial element $q\in G$ of the form $\beta_p(y_1^{\epsilon_1}, \ldots, y_p^{\epsilon_p})$, where $p \geq 1$, $\beta^p$ is a bracket arrangement of weight $p$, $y_i\in Y$ and $\epsilon_i=\pm 1$ for $1 \leq i \leq p$. 
\end{definition}
We omit the phrase ``relative to $Y$," when the set of generators $Y$ has been fixed. There is no assumption that the $y_i$ are pairwise distinct.

Let $F=F(Y)$ be the free group on the set $Y$. For $w \in F$ the support $\supp(w)$ of $w$ is the set of generators appearing in the unique reduced word in $Y^{\pm 1}$ representing $w$.

\begin{proposition} \begin{enumerate}
\item For any $p\geq 1$, any ordered $p$-tuple $(w_1, \ldots, w_p)$ of elements of $F$, and bracket arrangement $\beta^p$, the element $\beta^p(w_1, \ldots, w_p)$ of $F$ is in the kernel of the projection $F \to F/\langle\langle w_i \rangle\rangle$, for all $1 \leq i \leq p$. 
\item If $w=\beta^p(y_1^{\epsilon_1}, \ldots, y_p^{\epsilon_p})$  is a monic commutator in $F$, then $\supp(w)=\{y_1, \ldots, y_p\}$.
\end{enumerate}
\label{poof}
\end{proposition}

\begin{proof} (i) The statement is true if $p=1$. If $p>1$ and $\beta^p=[\beta^k\beta^\ell]$ with $1 \leq k$, $1 \leq \ell$, and $k+\ell=p$, then by induction $\beta^k(w_1, \ldots, w_k)$ or $\beta^\ell(w_{k+1}, \ldots, w_p)$ vanishes modulo $w_i$, since $1 \leq i \leq k$ or $k<i\leq p$. Then the commutator 
\[
[\beta^k(w_1 \ldots, w_k),\beta^\ell(w_{k+1}, \ldots, w_p)]=\beta^p(w_1, \ldots, w_p)
\] 
also vanishes modulo $w_i$.\\
(ii) Since $F \to F/\langle\langle y_i^{\epsilon_i} \rangle\rangle$ 
restricts to an injection on $\langle y_j \mid y_j \neq y_i \rangle$ 
for each $i$ and $w \neq 1$ by hypothesis, we must have $y_i \in \supp(w)$ for $1 \leq i \leq p$. %\qed
\end{proof}

\begin{definition} A subset \sS\ of $2^Y$ is {\em meet-closed} if it is closed under intersection, and is {\em atomic} if $\{y\} \in \sS$ for every $y \in Y$.
\end{definition}

Henceforth assume $2 \leq |Y|<\infty$. An atomic meet-closed family $\sS \subseteq 2^Y$ forms an atomic meet-semilattice under the inclusion partial order, with smallest element $\emptyset$. By convention, the intersection $Y$ of the empty family is an element of \sS. Then, since $Y$ is finite, joins exist in \sS\ as well, so \sS\ is an atomic lattice under inclusion.

\begin{definition} Let $\sS \subseteq 2^Y$ be an atomic lattice, and $w \in F(Y)$. The {\em support relative to \sS of $w$} is 
\[
\supp_\sS(w) =\bigcap\{S \in \sS \mid \supp(w) \subseteq S\}.
\]
\end{definition}

Again we will omit the phrase ``relative to \sS" when there is no risk of confusion.

\begin{lemma} \label{lem:monlem} 
Let $\sS=\{S_1, \ldots, S_r\} \subseteq 2^Y$ be an atomic lattice, linearly-ordered so that $S_i \subseteq S_j$ implies $i \leq j$. Then every $w \in F(Y)$ has a factorization  $w=w_1\cdots w_r$, where $w_i$ is either 1 or a product of monic commutators with support relative to $\sS$ equal to $S_i$ for $1 \leq i \leq r$.
\end{lemma}

\begin{proof} 
Observe that $S_1=\emptyset$ and $S_r=Y$.

We prove by induction on $k$ that $w$ has a factorization $w=w_1 \ldots w_ks$ with $w_i$ equal to 1 or a product of monic commutators with support equal to $S_i$, for $1 \leq i \leq k$, and $s$ equal to 1 or a product of monic commutators with support strictly greater than $S_k$ in the linear order. Since $\sS$ is atomic and elements of $Y^{\pm 1}$ are monic commutators by definition, there is such a factorization $w=w_1s$ with $w_1=1$ and $s=w$. 

Assume inductively that $k \geq 1$ and $w=w_1 \cdots w_ks$ as above. The case $s=1$ is trivial, so assume $s\neq 1$ and fix a factorization of $s$ into monic commutators as described. We show $s=uv$, where $u$ is 1 or a product of monic commutators with support $S_{k+1}$ and $v$ is 1 or a product of monic commutators with support greater than $S_{k+1}$, by induction on the number of monic commutator factors in $s$ with support equal to $S_{k+1}$. If no monic commutator factor in $s$ has support equal to $S_{k+1}$, the statement holds with $u=1$ and $v=s$. 

Suppose that some monic commutator factor occurring in $s$ has support equal to $S_{k+1}$. Then $s=s_1\cdots s_tyz$ where each $s_i$ is a monic commutator with support strictly greater than $S_{k+1}$, $y$ is a monic commutator with support $S_{k+1}$, and $z$ is a product of monic commutators with support greater than or equal to $S_{k+1}$. Then, following Stanford \cite{Stanf99}, $$s=ys_1[s_1,y]s_2[s_2,y]s_3 \cdots s_t[s_t,y]z.$$ Then $s=ys'$ with $s'=s_1[s_1,y]s_2[s_2,y]s_3 \cdots s_t[s_t,y]z$. Using Proposition~\ref{poof} (ii), we see that $s'$ has one fewer monic commutator factors with support $S_{k+1}$ than $s$. By the inductive hypothesis, $s'=u'v'$ with $u'$ equal to 1 or a product of monic commutators with support $S_{k+1}$ and $v'$ equal to 1 or a product of monic commutators with support strictly greater than $S_{k+1}$. Setting $u=yu'$ and $v=v'$, we have the 
desired factorization $s=uv$.

Then, setting $w_{k+1}=u$ and $s'=v$, we have $w=w_1 \cdots w_{k+1}s'$, with $w_i$ equal to 1 or a product of monic commutators with support $S_i$, for $1 \leq i \leq k+1$, and $s'$ equal to 1 or a product of monic commutators with support strictly greater than $S_{k+1}$. This completes the initial inductive argument; the statement of the lemma is obtained by setting $k=r$. %\qed
\end{proof}

The image of $w \in F$ under the surjection $F \to G$ is denoted $\bar{w}$. Any monic commutator in $G$ is the image of  a monic commutator in $F$. Let $\sS\subseteq 2^Y$ be an atomic lattice as above.

\begin{definition} 
The monic commutator $q$ in $G$ is {\em transverse to} $\sS$ if $q=\bar{w}$ for some monic commutator $w$ in $F$ with $\supp_\sS(w)=Y$.
\label{bigfam}
\end{definition}

For $S\subseteq Y$, let $G_S=G/\langle\langle Y- 
S \rangle\rangle$. 
The canonical projection $\rho_S: G \to G_S$ then 
trivializes
generators of $G$ not in $S$. Note that $\rho_Y \colon G \to G_Y$ is the identity map, and $\rho_\emptyset$ is trivial.

For an arbitrary family $\X \subseteq 2^Y$, let 
\[
\rho_\X  = {\textstyle \underset{S\in \X}{\prod}} \rho_S: G \to \prod_{S\in \X} G_S.
\]
Given such a family $\X$, let $\sS=\bar{\X}$ denote the intersection of all atomic lattices in $2^Y$ containing \X. Then \sS\ consists of the elements of \X\ and all their finite intersections, along with the singletons $\{y\}$ for $y \in Y$, $\emptyset$, and $Y$. Conversely, given an atomic lattice $\sS \subseteq 2^Y$, the set $\X=\max(\sS)$ of elements of \sS\ covered by $Y$ is the minimal subset of \sS\ satisfying $\bar{\X}=\sS$. Note $\X=\max(\sS)$ is an anti-chain (or clutter): its elements are pairwise incomparable.

\begin{proposition} Suppose $\sS \subseteq 2^Y$ is an atomic lattice and $\X = \max(\sS)$. Then $\rho_\X$ and $\rho_{\sS-\{Y\}}$ have the same kernel.
\label{bigsmall}
\end{proposition}

\begin{proof} Since $\X \subseteq \sS-\{Y\}$, $\ker(\rho_{\sS-\{Y\}})\subseteq \ker(\rho_\X)$. Let $T \in \sS-\{Y\}$. Then there exists $S \in \X$ with $S \supseteq T$. One has a natural projection $\rho_{S,T} \colon G_S \to G_T$, in which the elements of $S - T$ are mapped to 1.  Then $\rho_{\sS-\{Y\}}$ can be factored as $\rho_{\sS-\{Y\}}=\psi \circ \rho_\X$, where $\psi \colon \prod_{S \in \X} G_S \to \prod_{T \in \sS-\{Y\}} G_T$ has components $\rho_{S,T}$ if $S \supseteq T$, and $1$ otherwise. Then $\ker(\rho_\X) \subseteq \ker(\rho_{\sS-\{Y\}})$. %\qed
\end{proof}

\begin{definition} Let $\X \subseteq 2^Y$. A monic commutator $q$ is {\em transverse to \X} if there exists $w \in F$ with $q=\bar{w}$ and $\supp(w) \not \subseteq S$ for every $S \in \X$.
\end{definition}

Then $q$ is transverse to \X\ if and only if $q$ is transverse to the atomic lattice $\sS=\bar{\X}$ according to Definition~\ref{bigfam}.

\begin{proposition} If $q \in G$ is a monic commutator transverse to \X, then $q$ is in the kernel of the homomorphism $\rho_\X \colon G \to \prod_{S \in \X} G_S$.
\label{oneway}
\end{proposition}

\begin{proof} 
By hypothesis, $q=\bar{w}$ for some monic commutator $w=\beta^n(y_1^{\epsilon_1}, \ldots, y_p^{\epsilon_p})$ 
such that, for every $S \in \X$, there is an $i$ such that $y_i \not \in S$. The composite $F \to G \overset{\rho_S}{\longrightarrow} G_S$ factors through $F \to F/\langle\langle y_i^{\epsilon_i} \rangle\rangle$. Then $\rho_S(q)=1$ by Proposition~\ref{poof} (i), and consequently $\rho_\X(q)=1$.
%\qed
\end{proof}

\begin{definition} A subset $S$ of $Y$ is {\em retractive} if $\rho_S \colon G \to G_S$ restricts to an injection $\langle S \rangle \to G_S$.  
\end{definition}
If $S$ is retractive then $\rho_S|_{\langle S\rangle}\colon \langle S \rangle \to G_S$ is an isomorphism.

A subset $S$ of $Y$ is retractive if and only if relations satisfied by $S$ in $G_S$, where generators outside of $S$ have been set equal to 1, are satisfied in $G$. If $S$ is retractive, we may identify $\langle S \rangle$ with $G_S$.  Then $\rho_S \colon G \to G_S$ is a retraction in the usual sense: $\rho_S(q)=q$ for $q\in G_S$.  The set $Y$ itself is always retractive.

\begin{theorem} Let $\sS \subseteq 2^Y$ be an atomic lattice consisting of retractive sets, and let $\X=\max(\sS)$. Then the kernel of $\rho_\X$ is generated by the monic commutators in $G$ transverse to \X.
\label{thm:kernel}
\end{theorem}

\begin{proof} By Proposition~\ref{oneway}, every such monic commutator lies in the kernel of $\rho_\X$. For the converse, suppose $q \in \ker(\rho_\X)$. As in Lemma \ref{lem:monlem}, write $\sS=\{S_1 \ldots, S_r\}$ with $i\leq j$ when $S_i \subseteq S_j$, and write $q=q_1 \ldots q_r$, where $q_i=\bar{w_i}$ with $w_i \in F$ equal to 1 or a product of monic commutators with support equal to $S_i$, for $1 \leq i \leq r$. We prove $q_j=1$ for $1 \leq j <r$ by induction on $j$, with the result that $q=q_r=\bar{w}_r$, and $\supp_\sS(w_r)=S_r=Y$, so that $q$ is a product of monic commutators transverse to \sS. We have $q_1=1$. Suppose $j \geq 2$ and $q_i=1$ for $1 \leq i<j$. Since $\rho_\X(q)=1$, we have $\rho_{S_j}(q)=1$ by Proposition~\ref{bigsmall}. If $1 \leq i<j$, then $\rho_{S_j}(q_i)=\rho_{S_j}(1)=1$. For $j<k<r$, we have $q_k=\bar{w}_k$ and $\supp_\sS(w_k)=S_k$. By the assumption on the linear order, $S_k \not \subseteq S_j$. Then $\supp(w_k) \not \subseteq S_j$, and hence $\rho_{S_j}(q_k)=1$ by Lemma~\ref{poof}.

Then $1=\rho_{S_j}(q)=\rho_{S_j}(q_j)$. Since $S_j$ is retractive, this implies $q_j=1$. This completes the inductive step. %\qed
\end{proof}

The hypothesis that \sS\ consist of retractive sets is necessary. For example, 
let $G=\langle a,b \mid aba=bab\rangle$ be the three strand full braid group, with $S= \{a\}$. 
Then $G_S$ is trivial, and  $a\neq 1$ is in $\ker(\rho_S)$, but is not represented by a product of monic commutators whose supports contain $b$, as can be seen by considering induced permutations.

\begin{example}
Let $Y=\{A_{ij} \mid 1 \leq i< j \leq \ell\}$ be the standard set of generators for $G=PB_\ell$, the pure braid group on $\ell$ strands. For $I \subseteq \{1, \ldots, \ell\}$, let $S_I=\{A_{ij} \mid i, j \in I\}$. The homomorphism $G \to G_{S_I}$ corresponds to the map on braids defined by deleting the strands whose labels are not in $I$. That $S_I$ is retractive can be seen by pulling the strands with labels not in $I$ off to the side -- see also  Theorem~\ref{thm:localret}.

In \cite{Stanf99}, it is shown that the group of Brunnian braids, those which become trivial upon deletion of any strand, is generated by monic commutators whose entries involve every strand. This result now follows from Theorem~\ref{thm:kernel}, 
by taking 
$\X=\{S_I \mid |I|=\ell-1\}$ and $\sS=\left\{S_I \mid I \subseteq \{1, \ldots, \ell\}\right\}$. 
\label{ex:stanford}
\end{example}

Finally, we establish a condition for the injectivity of $\rho_\X$ that one can often check by hand. 

\begin{theorem} Let $\sS \subseteq 2^Y$ be an atomic lattice consisting of retractive sets, and let $\X=\max(\sS)$. 
Then $\rho_\X$ is injective if
\begin{enumerate}
\item $[a,b]=1$ for all $a,b \in Y$, $\{a,b\} \not \subseteq S$ for all $S \in \X$, and
\item $[a,[G_S,G_S]]=1$ for every $S\in \X$ and  $a\in Y-S$.
\end{enumerate}
\label{thm:injtest}
\end{theorem}

\begin{proof} Assume (i) and (ii) are satisfied. 
Let $w=\beta^p(y_1^{\pm 1}, \ldots, y_p^{\pm 1})$ be a monic commutator of weight $p$ in $F$, transverse to \X. We show $\bar{w}=1$ in $G$ by induction on $p$. Since \sS\ includes the singletons and $\supp(w)=\{y_1, \ldots, y_p\}$ is transverse to \sS, $p \geq 2$. The case $p=2$ is covered by the first hypothesis. Suppose $p>2$. Then $w=[u,v]$ for monic commutators $u$ and $v$ in $F$ of weights $k \geq 1$ and $\ell \geq 1$, with $k+\ell=p$. We may assume without loss that $\ell \geq 2$. If $v$ is transverse to \X\ then $\bar{v}=1$ by the inductive hypothesis, and then $\bar{w}=1$. Thus we may assume $\supp(v)\subseteq S$ for some $S\in \X$. Since $\ell \geq 2$, $\bar{v} \in [G_S,G_S]$. 

Write $u=u_0s_1 u_1 s_2 \cdots s_ku_k$ with $\supp(u_i)\subseteq Y-S$ for $0 \leq i \leq k$ and $\supp(s_i) \subseteq S$ for $1 \leq i \leq k$.  Repeated application of (ii) shows $[\bar{u}_i,\bar{v}]=1$ for $0 \leq i \leq k$. Writing $x \sim y$ in place of $\bar{x}=\bar{y}$, we show 
\[
[u_0s_1 u_1 s_2 \cdots s_iu_i,v]\sim [s_1s_2 \cdots s_i,v]
\]
by induction on $i$. The statement holds for $i=0$ by the observation above. For the inductive step, 
if $[u_0s_1 \cdots s_{i-1}u_{i-1},v]\sim [s_1 \cdots s_{i-1},v]$, we have
\begin{align*}
[u_0s_1 \cdots s_iu_i,v] &= u_i^{-1}[u_0s_1 \cdots s_{i-1}u_{i-1}s_i,v]u_i[u_i,v]
\sim  u_i^{-1}[u_0s_1 \cdots s_{i-1}u_{i-1}s_i,v]u_i\\
& = u_i^{-1}s_i^{-1}[u_0s_1 \cdots s_{i-1}u_{i-1},v]v^{-1}s_ivu_i
\sim u_i^{-1}s_i^{-1}[s_1 \cdots s_{i-1},v]v^{-1}s_ivu_i\\
&= u_i^{-1}[s_1 \cdots s_{i-1}s_i,v]u_i\sim  [s_1 \cdots s_i,v],
\end{align*}
where the last equality holds by (ii), since $[s_1 \cdots s_i,v] \in [G_S,G_S]$. 

Setting $i=k$, we conclude $\bar{w}=[\bar{u},\bar{v}] \in \langle S \rangle$. Since $\rho_\X(\bar{w})=1$, $\rho_S(\bar{w})=1$. Then $\bar{w}=1$ since $S$ is retractive. 
 %\qed
\end{proof}

Earlier versions of this article claimed that the converse of Theorem~\ref{thm:injtest} was clearly true, but in fact it is false, as illustrated by Example 5.11 of \cite{CF19}. 

In order to apply any of these results, it is necessary that $\{y\}$ is retractive for every $y \in Y$. Of course this is difficult to detect in general. For our applications, the condition is guaranteed by Corollary~\ref{singletons} below, as illustrated in Corollary~\ref{sing2}. We will use the following slightly simplified terminology.

\begin{definition} Suppose $\{y\}$ is retractive for every $y \in Y$. An antichain $\X \subseteq 2^Y$ is a {\em retractive family} if all elements of \X\ and all their finite intersections are retractive.
\end{definition}

If \X\ is a retractive family, the atomic lattice $\sS=\bar{\X}$ consists of retractive sets, and $\X=\max(\sS)$.

We close this section with some further sufficient conditions for retractiveness useful for arrangement-like groups.

\begin{proposition} Let $Y$ be a finite set of generators for $G$ and $S \subseteq Y$. Suppose $\langle S \rangle$ is a homomorphic image of $G_S$, and $G_S$ is a Hopfian group. Then $S$ is retractive.
\label{hopf}
\end{proposition}

\begin{proof} Let $\phi \colon G_S \to \langle S \rangle$ be a surjection. The composite $\rho_S\circ \phi \colon G_S \to G_S$ is a surjection, hence an isomorphism since $G_S$ is Hopfian. Then $\rho_S$ is injective since $\phi$ is surjective. %\qed
\end{proof}

\begin{corollary}  If $G_S$ is a free group of rank $|S|$, then the subset $S$  of $Y$ is retractive. In particular, if $F=F(Y)$ is the free group with basis $Y$, then every family $\sS \subseteq 2^Y$ is retractive.
\label{prop:freeret}
\end{corollary}

\begin{corollary} Let $y \in Y$. Suppose the image of $y$ in $G/[G,G]$ has infinite order. Then $\{y\}$ is retractive. In particular, if $G/[G,G]$ is free abelian of rank $|Y|$, then $\{y\}$ is retractive for every $y \in Y$.
\label{singletons}
\end{corollary}

\begin{corollary} Suppose $|S|=r$ and $G_S \cong \Z \times F_{r-1}$. Suppose $\langle S \rangle$ is generated by some $r$ of its elements, one of which is central in $\langle S \rangle$. Then $S$ is retractive.
\label{cor:central2}
\end{corollary}

\begin{proof} The hypothesis implies that $\langle S \rangle$ is a homomorphic image of $\Z \times F_{r-1}$. Since $\Z \times F_{r-1}$ is residually finite and finitely-generated, it is Hopfian \cite{MKS76}. Hence $S$ is retractive by Proposition~\ref{hopf}. %\qed
\end{proof}

\begin{corollary} Suppose $G$ is a group with set of generators $Y$, and $S \subseteq Y$. Let $\Delta$ be the product of the elements of $S$, in some order, and let $R_S=\{[\Delta, y] \mid y \in S \}$. Suppose $G$ has a presentation $\langle Y \mid R \rangle$ where $R_S \subseteq R$, and $G_S = \langle S \mid R_S \rangle$. Then $S$ is retractive.
\label{conjfree}
\end{corollary}

\begin{proof} We have that $G_S \cong \Z \times F_{r-1}$, with $r=|S|$, from the given presentation. By the hypothesis on $G$, $\Delta$ is central in $\langle S \rangle$, and by the definition of $\Delta$, $\langle S \rangle$ is generated by $\Delta$ and $r-1$ elements of $S$, for instance the first $r-1$ factors of $\Delta$. Then $S$ is retractive by the preceding result. %\qed
\end{proof}

\begin{remark} A special case of Corollary \ref{conjfree} subsumes the family of cyclically-presented hyperplane groups from \cite{Fried16}. Suppose $G$ has a presentation with generators $Y=\{y_1 \ldots, y_n\}$ and relations of the form 
\begin{equation}
y_{i_t} \cdots \, y_{i_1} = y_{i_{t-1}} \cdots \, y_{i_1}y_{i_t} = \ldots\ldots = y_{i_1} y_{i_t}\cdots \, y_{i_2},
\tag{$\dag$}
\label{cycpres}
\end{equation}
for a collection \X\ of subsets $S=\{i_1, \ldots, i_t\} \in 2^Y$, whose pairwise intersections have 
at most one element. 
Setting $\Delta_S=y_{i_t} \cdots \, y_{i_1}$, the set of relations \eqref{cycpres} is equivalent to the set $\{[\Delta_S, y_{i_j}] \mid 1 \leq j \leq t\}$ of Corollary~\ref{conjfree}. 
Then \X\ is a retractive family. If the order of factors in $\Delta_S$ is compatible with the given linear ordering of $Y$ for each $S \in \X$, then $G$ is a conjugation-free, cyclically-presented hyperplane group as defined in~\cite{Fried16}.
\label{rem:cycpres}
\end{remark}

\subsection{Retractive families for arrangement groups}

Let \A\ be an affine hyperplane arrangement in $\C^\ell$, with complement $M=M(\A)=\C^\ell - \bigcup_{H\in \A} H$ and group $G=G(\A)=\pi_1(M,x_0)$, $x_0 \in M$. 
Let $\LL=\LL(\A)$ be the intersection poset of \A. For $\emptyset\neq X \in \LL$, denote the complement 
\[
M(\A_X)=\C^\ell - \bigcup_{H \supseteq X} H
\] 
of $\A_X$ by $M_X$, and the inclusion $M \hookrightarrow M_X$ by $i_X$. 
Let $B_X$ be a closed ball 
centered at a generic point $p$ of $X$, with sufficiently small radius to ensure that $B_X \cap M_X \subseteq M$. Choose a point $x_X \in B_X \cap M$ and a 
path $\gamma_X$ in $M$ from $x_0$ to $x_X$ for each $X$. 

For $H \in \A$, the composite $B_H \cap M \hookrightarrow \C^\ell-H \to \C^\ell/H - H \cong \C^\times$ is a homotopy equivalence, so one has a 
natural identification of $\pi_1(B_H \cap M,x_H)$ with \Z. 
Let $\omega_H$ be a loop in $B_H \cap M$ based at $x_H$ 
representing the positive 
generator of $\pi_1(B_H \cap M,x_H)$. The element $y_H$ of $G$ represented by the loop $\gamma_H \omega_H \gamma_H^{-1}$ has the property that $(i_X)_*(y_H)=1$ in $\pi_1(M_X,x_0)$, if $H \not \in \A_X$. We call $y_H$ a {\em meridian} of $H$. 

Let $S$ be a generic affine complex line in $\C^\ell$ containing $x_0$. Then $M \cap S$ is the plane $S$ with the $|\A|$ points $S \cap H$, $H \in \A$, removed, and $\pi_1(M \cap S,x_0)$ is a free group of rank $|\A|$.
By the Zariski-Lefschetz hyperplane section theorem of Hamm and L\^e, \cite{HaLe73}, the inclusion-induced homomorphism $\pi_1(M \cap S, x_0) \to \pi_1(M,x_0)$ is surjective. 
By construction, for each $H \in \A$, $y_H$ is the image of a conjugate of the generator of $\pi_1(M \cap S, x_0)$ corresponding to $S \cap H$. 

\begin{definition} A {\em standard set of generators} of $G$ is a generating set of $G$ consisting of one meridian $y_H$ for each hyperplane $H \in \A$.
\end{definition}

Braid monodromy presentations and the minimal presentations obtained from 
Randell-Arvola presentations of $\pi_1(M,x_0)$ have standard set of generators \cite{CS2,Ar3,OT92,F3}. 

For any arrangement, it is well known that $H_1(G;\Z)=H_1(M;\Z)$ is free abelian, generated by the homology classes of the meridians. 
Corollary~\ref{singletons} yields the following.

\begin{corollary} If $Y$ is a standard generating set of $G$, then $\{y\}$ is retractive for all $y \in Y$.
\label{sing2}
\end{corollary}

Let $Y=\{y_H \mid H \in \A\}$ be a fixed standard set of generators. For $\B \subseteq \A$, set $S_\B=\{y_H \mid H \in \B\}\subseteq Y$. The correspondence $\B \mapsto S_\B$ identifies $2^\A$ with $2^Y$, and embeds the poset of flats $\L=\L(\A)$ as an atomic lattice in $2^Y$. If $\B=\A_X$ for some $X$ in the intersection poset $\LL=\LL(\A)$ we write $S_\B=S_X$. The correspondence $X \mapsto S_X$ embeds $\LL$ as an atomic lattice in $2^Y$. A subset of $\LL$ is an atomic lattice or antichain if and only if the corresponding family in $2^Y$ is. For $\B \subseteq \A$ write $G_\B$ for $G_{S_\B}$ and $\rho_\B=\rho_{S_\B}$, and for $\X \subseteq 2^\A$ write 
\[
\rho_\X = \prod_{\B \in \X} \rho_\B \colon G \to \prod_{\B \in \X} G_\B.
\]

If \A\ is a central arrangement of rank two, $S_X$ is retractive for every $X \in \LL$, by Corollary~\ref{singletons}, and there are no other retractive subsets of $2^Y$. In general whether $S_\B$ is retractive depends on $Y$; Theorem~\ref{thm:localret} below establishes a sufficient condition. Similar considerations occupy Section 4 of \cite{DSY16}. 

The following result is perhaps well-known - see \cite{Fu82}; we provide a detailed proof.
\begin{lemma} Let $X \in \LL$, and let $i=i_X \colon M \to M_X$.
Then $i_*$ induces an isomorphism $G_X \to \pi_1(M_X,x_0)$.
\label{induce}
\end{lemma}

\begin{proof} Since $i_*(y_H)=1$ for $H \not \in \A_X$, 
$i_*$ induces a well-defined map $\alpha \colon G_X \to \pi_1(M_X,x_0)$, satisfying $i_*=\alpha \circ \rho_X$.  We claim $\alpha$ is an isomorphism.

First assume $x_0 \in B_X$. Let $\hat{r} \colon \C^\ell \to \C^\ell$ be the radial retraction onto $B_X$. The image of $M_X$ under $\hat{r}$ is $B_X \cap M$, hence $\hat{r}$ restricts to a map $r \colon M_X \to M$, and the composite $i \circ r \colon M_X \to M_X$ is a homotopy equivalence. Let $\beta=\rho_X \circ r_* \colon \pi_1(M_X,x_0) \to G_X$.

We have the following commutative diagram of groups and homomorphisms.
\[
\xymatrix@1{
G \ar[rr]^-{{\id}} \ar[dr]^-{\rho_X} & &G\ar[dd]^-{i_*} \ar[dl]_{\rho_X}\\
&G_X\ar[dr]^{\alpha}&\\
\pi_1(M_X,x_0) \ar[uu]^{r_*} \ar[rr]^{\cong} \ar[ur]^{\beta}&& \pi_1(M_X,x_0)
}
\]
We conclude $\alpha$ is surjective.

Now suppose further that $y_H \in \im\left(\pi_1(B_X \cap M,x_0) \to \pi_1(M,x_0)\right)$ for each $H \in \A_X$. Since $r$ restricts to the identity on $B_X \cap M$, we have the commutative diagram
\[
\xymatrix@1{
G \ar[rr] \ar[dr]^{i_*} \ar[dd]_{\rho_X}& &G\ar[dd]^-{\rho_X} \\
&\pi_1(M_X,x_0)\ar[ur]^{r_*}\ar[dr]^{\beta}&\\
G_X \ar[rr]^{{\id}} \ar[ur]^{\alpha}&& G_X
} 
\]
from which we conclude $\alpha$ is an isomorphism under these assumptions.

For the general case, choose a path $\gamma$ in $M$ from the point $x_X$ in $B_X \cap M$ to the base point $x_0$. For $H \in \A_X$ choose a meridian $y_H'$ of $H$ in $B_X \cap M$, based at $x_X$. For $H \in \A-\A_X$, set $y_H'=(\gamma_\#)^{-1}(y_H)$. Then 
$
Y'=\{y_H' \mid H \in \A_X \}
$
is a standard set of generators for $\pi_1(M,x_X)$, and $i_*(y_H')=1$ for all $H \in \A-\A_X$. Moreover the conditions in the special cases treated above are satisfied. Consequently, $i$ induces an isomorphism $G_{X,x_X} \to \pi_1(M_X,x_X)$, where 
\[
G_{X,x_X}= \pi_1(M,x_X)/\langle\langle y_H' \mid H \in \A-\A_X\rangle \rangle. 
\]
The isomorphism $\gamma_\# \colon \pi_1(M,x_X) \to G$ induces an isomorphism $G_{X,x_X} \to G_X$ by definition of $y_H'$ for $H \in \A-\A_X$. Then
the commutative diagram
\[
\xymatrix@1{
G_{X,x_X} \ar[rr]^-{i_*}_-{\cong}\ar[d]_-{\gamma_\#}^{\cong} && \pi_1(M_X,x_X) \ar[d]_{\cong}^-{\gamma_\#} \\
G_X\ar[rr]^-{i_*}&& \pi_1(M_X,x_0) 
} 
\]
implies $i_* \colon G_X \to \pi_1(M_X,x_0)$ is an isomorphism. %\qed
\end{proof}

The condition on $y_H$ in Lemma~\ref{induce} is satisfied if some conjugate of $y_H$ is represented by the loop $\gamma_H \omega_H \gamma_H^{-1}$ constructed earlier. This is the case in the presentations discussed above, and will be so in all our applications.

\begin{theorem} \label{thm:localret}
Let $X \in \LL$, and let $j \colon B_X \cap M \to M$ and $i \colon M \to M_X$ be the inclusion maps. Assume that $Y=\{y_H\mid H \in \A\}$ is standard generating set and 
\begin{equation} \label{eq:localeq}
(\gamma_X)_\#(y_H) \in \im \left(\pi_1(B_X \cap M,x_X ) \xlongrightarrow{j_*} \pi_1(M,x_X)\right)\tag{$\dag\dag$} \ \text{for every} \ H \in \A_X.
\end{equation}
Then the subset $S_X=\{y_H\mid H \in \A_X\}$ of $Y$ is retractive.
\end{theorem}

\begin{proof} Write $\gamma=\gamma_X$. Since $i \circ j \colon B_X \cap M \to M_X$ is a homotopy equivalence, the restriction of $i_*\colon \pi_1(M,x_X) \to \pi_1(M_X,x_X)$ to the image of $j_* \colon \pi_1(B_X \cap M,x_X) \to \pi_1(M,x_X)$ is injective. Then \eqref{eq:localeq} implies that the restriction of $i_* \colon \pi_1(M,x_X) \to \pi_1(M_X,x_X)$ to $\gamma_\#(\langle S_X \rangle)$ is injective. Then, since $\gamma_\#$ is an isomorphism commuting with the inclusion-induced homomorphisms, the restriction of $i_* \colon G \to \pi_1(M_X,x_0)$ to $\langle S_X \rangle$
is injective. 
Then Lemma~\ref{induce} implies $S_X$ is retractive. %\qed
\end{proof}

\begin{definition} Let $\X \subseteq 2^\A$ be an antichain. A standard set of generators $Y$ of $G$ is {\em adapted to} $\X$ if the subset $S_\B$ of $Y$ is retractive, for each $\B \in \bar{\X}$, the atomic lattice generated by $\X$ in $2^\A$. 
\end{definition}

\begin{remark} If \X\ consists of rank two flats of \A, then distinct elements of \X\ are disjoint or have one element in common. Then $Y$ is adapted to \X\ if and only if $S_\B$ is retractive for each $\B \in \X$, by Corollary~\ref{sing2}.
\label{adaptation}
\end{remark}

Other retractive subsets of $2^Y$ arise from families of parallel hyperplanes in \A. 

\begin{lemma} If $\PP$ is a set of pairwise disjoint hyperplanes in \A\ and $i \colon M \to M(\PP)$ is the inclusion, then $\pi_1(M(\PP),x_0)$ is a free group with free basis $\{i_*(y_H) \mid H \in \PP\}$.
\end{lemma}

\begin{proof} Since the hyperplanes in \PP\ are mutually disjoint, they have a common linear complement $C \cong \C$. The inclusion $C \cap M(\PP) \to M(\PP)$ is a homotopy equivalence, with homotopy inverse given by restricting the linear projection onto $C$, and $C \cap M(\PP)$ is the complement in $C$ of the finite set $\{H \cap C \mid H \in \PP\}$. Then $\pi_1(M(\PP),x_0)$ is a free group. Because $y_H$ is a meridian of $H$ for $H \in \PP$, the set $\{i_*(y_H) \mid H \in \PP\}$ is a free basis of $\pi_1(M(\PP),x_0)$, by \cite[Cor.~3.5.1]{MKS76}.  %\qed
\end{proof}

\begin{corollary} Let $\PP$ be a set of pairwise disjoint hyperplanes in \A. Let $i \colon M \to M(\PP)$ be the inclusion map. Suppose $i_*(y_H)=1$ for all $H \in \A-\PP$. Then
\begin{enumerate}
\item $i$ induces an isomorphism $G_{\PP} \to \pi_1(M(\PP),x_0)$, and
\item the subset $S_\PP$ of $Y$ is retractive.
\end{enumerate}
\label{parret}							
\end{corollary}

\begin{proof} The first statement is an immediate consequence of the preceding lemma, and the second statement then follows from Corollary~\ref{prop:freeret}. %\qed
\end{proof}

A maximal subset of pairwise disjoint hyperplanes will be called a {\em parallel class}. The hyperplanes in a parallel class are those whose closures contain a fixed codimension two subspace of the hyperplane at infinity in projective space. 

\begin{corollary} Let $\X=\X_0 \cup \X_\infty$, where $\X_0 \subseteq \L$ is an antichain and $\X_\infty \subseteq 2^\A$ is a set of parallel classes. Let $Y$ be a standard set of generators of $G$ adapted to $\X_0$. Then $\ker(\rho_\X)$ is generated by monic commutators whose support is transverse to~\X.
\label{cor:monicker}
\end{corollary}

\begin{proof} Let $\bar{\X}_0$ and $\bar{\X}_\infty$ denote the atomic lattices generated by $\X_0$ and $\X_\infty$ in $2^\A$, respectively, and set $\bar{\X}=\bar{\X}_0 \cup \bar{\X}_\infty$. Parallel classes are disjoint, and for any flat $\B \in \L$, $\B$ has at most one hyperplane in any given parallel class. Then 
$\bar\X$ is an atomic lattice, 
so $Y$ is adapted to \X\ by the assumption that $Y$ is adapted to $\X_0$ together with Corollary~\ref{parret} (ii). 
The statement is a then consequence of Corollary~\ref{thm:kernel}. %\qed
\end{proof}

\subsection{Examples}\label{subsec:examples}
The next %following 
result is useful in verifying condition (ii) of Theorem~\ref{thm:injtest}.
\begin{proposition}
Let $G$ be a group with set of generators $Y$. Let $S=\{s_0,\ldots s_m\}$
and $T=\{t_0,\ldots, t_n\}$ be subsets of $Y$, with $s_0=t_0$. Assume
that:
\begin{enumerate}
\item $[s_i,t_j]=1$ for $1\leq i \leq m$ and $1\leq j\leq n$, and
\item $[t_0 \cdots t_n,t_j]=1$ for $1 \leq j \leq n$.
\end{enumerate}
Then $[s_i, [\langle T\rangle,\langle T\rangle]]=1$ for $1\leq i \leq
m$ and  $[t_i, [\langle S\rangle,\langle S\rangle]]=1$ for $1\leq i \leq n$.
\label{prop:simplify}
\end{proposition}

\begin{proof} Let $t=t_1 \cdots t_n$. The set $\{\{t_0t ,t_1, \ldots, t_n\}$ generates the subgroup $\langle T \rangle$ of $G$.  Applying \cite[Problem 2.1.8]{MKS76} to this generating set, we see that 
elements of 
$[\langle T \rangle,\langle T \rangle]$ may be expressed as (commutators of) words in $T-\{t_0\}$. The first assertion then follows from (i).

For the second assertion, note that, since $s_0=t_0$, $[t_i,s_0t]=1$ by (ii), and $t_i^{s_0}=t_i^{t^{-1}}$ for each $i$, $1\le i\le n$. Using this, if $u$ and $v$ are words in $S-\{s_0\}$, then 
\[
t_i^{us_0v}= t_i^{s_0v} = t_i^{t^{-1}v}= t_i^{vt^{-1}}= t_i^{t^{-1}}= t_i^{s_0}.
\]
Consequently, for any $w \in \langle S\rangle$, we have $t_i^w=t_i^{s_0^m}$, where $m$ is the exponent sum of $s_0$ in $w$. In particular, $t_i^w=t_i$ for $w\in  [\langle S\rangle,\langle S\rangle]$, and $[t_i,w]=1$ as needed. %\qed
\end{proof}

\begin{example}
Let \A\ be the arrangement of lines in $\C^2$ with defining equations $u=0$, $u=1$, $v=0$, $v=-1$, and $u+2v=0$, see Figure~\ref{fig:arvola1}(a).
Using the Randell algorithm \cite{R3,F3}, the fundamental group $G$ of the complement of \A\ has a presentation with generators $y_i$, $1\le i \le 5$, corresponding to the lines, and relations $[y_1,y_4]$, $ [y_2,y_4]$, $[y_2,y_3]$, $[y_2,y_5]$, $[y_4,y_5]$, and $[y_1y_3y_5,y_i]$ for $i=1,3,5$. With $\X_0=\{\{H_1,H_3,H_5\}\}$, the standard generating set $Y=\{y_1, y_2, y_3, y_4, y_5\}$ is adapted to $\X_0$ by Proposition~\ref{conjfree}. Let $\X_\infty=\{\{H_1,H_2\},\{H_3,H_4\}\}$ and $\X=\X_0\cup \X_\infty$. 

\begin{figure}[h]
\begin{subfigure}{.49 \textwidth}
\centering
\includegraphics[scale=.2]{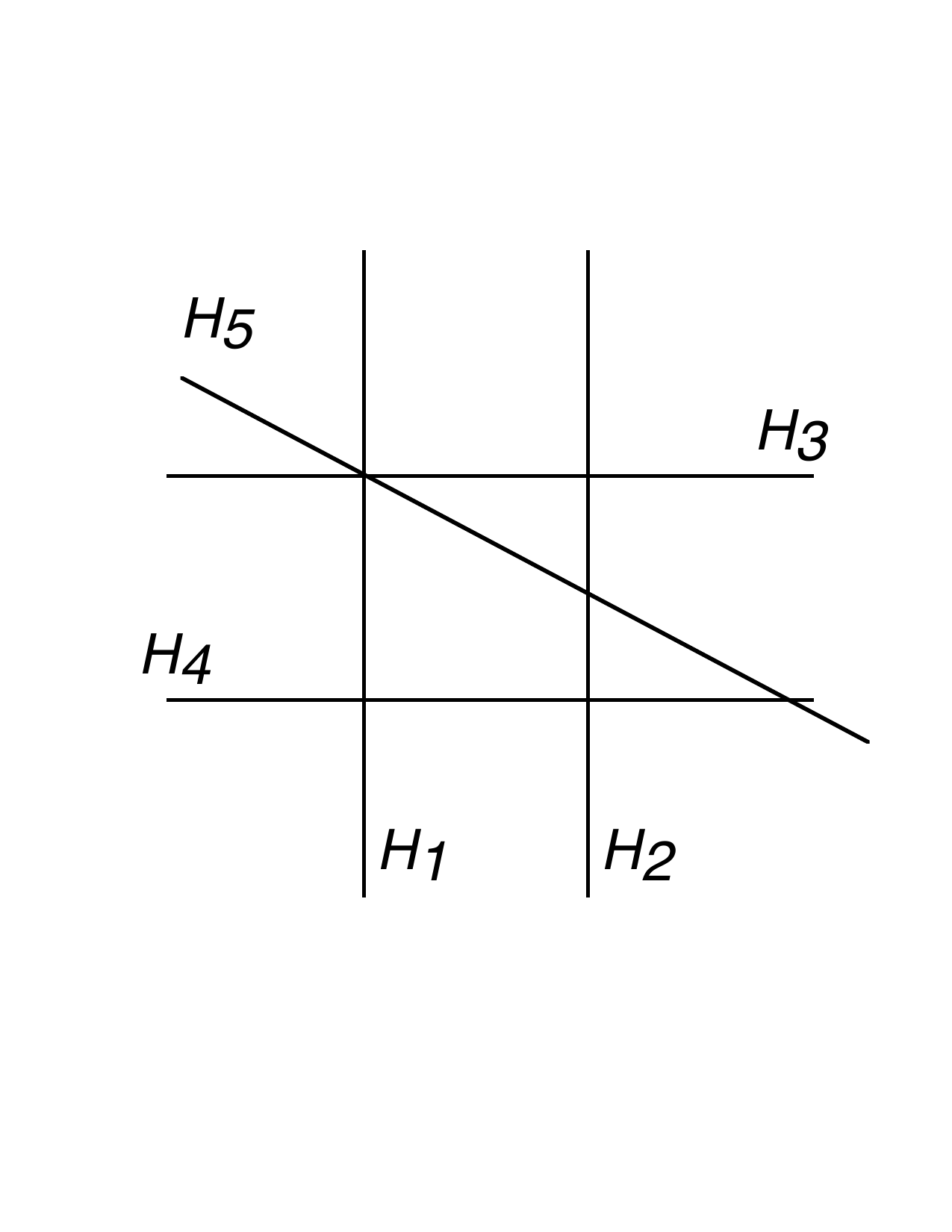}
\caption{Example~\ref{ex:arvola}}
\end{subfigure}
\begin{subfigure}{.49 \textwidth}
\centering
{\includegraphics[scale=.2]{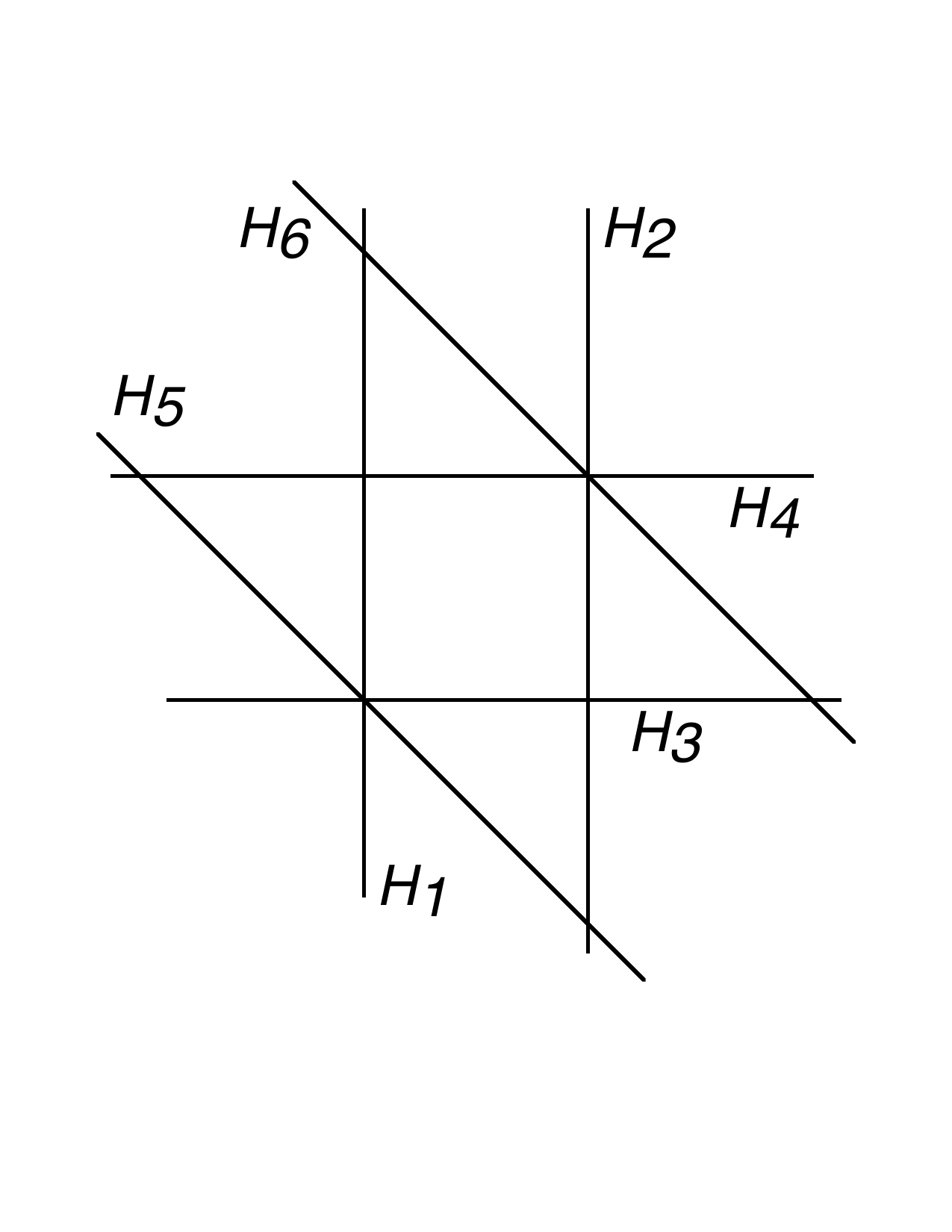}}
\caption{Example~\ref{ex:kohno}}
\end{subfigure}
\caption{Arrangements with residually free groups}
\label{fig:arvola1}
\end{figure}

We verify the conditions of Theorem~\ref{thm:injtest}. 
Condition (i) holds because $[y_i,y_j]=1$ for $(i,j)=(1,4), (2,3), (2,4)$, or $(2,5)$. 
Condition (ii) may be obtained from Proposition~\ref{prop:simplify} by first setting 
$\{s_0,s_1,s_2\}=\{y_1,y_3,y_5\}$ and $\{t_0,t_1\}=\{y_1,y_2\}$, and second setting 
$\{s_0,s_1,s_2\}=\{y_3,y_5,y_1\}$ and $\{t_0,t_1\}=\{y_3,y_4\}$. 
So  
$\rho_\X \colon G \to \prod_{\S \in \X} G_\S \cong (\Z \times F_2) \times F_2 \times F_2$ 
is injective.
\label{ex:arvola}
\end{example}

\begin{example}
Let \A\ be the arrangement of lines in $\C^2$ with defining equations $u=0,u=1, u+v=0, u+v=2, v=0$, and $v=1$, illustrated in Figure~\ref{fig:arvola1}(b). The group $G$ has generators $y_i$, $1\le i\le 6$, corresponding to the lines, with relations 
\[
 [y_2,y_5], \ [y_2,y_3], \ [y_6,y_3], \ [y_4,y_5^{y_1}], \ [y_1,y_4],  \ [y_1,y_6^{y_2}], 
\]
along with $[y_1y_3y_5,y_i]$ for $i=1,3,5$, and $[y_2y_4y_6,y_i]$ for $i=2,4,6$.

If
$
\X_0=\{\{H_1,H_3,H_5\},\{H_2,H_4,H_6\}\}
$, then $Y=\{y_1,\ldots,y_6\}$ is adapted to $\X_0$ by Corollary~\ref{conjfree}.
Let
$
\X_\infty=\{\{H_1,H_2\},\{H_3,H_4\},\{H_5,H_6\}\}
$
 and $\X=\X_0\cup \X_\infty$. 

Since $[y_4,y_5^{y_1}]=[y_1,y_4]=1$, we have $[y_4,y_5]=1$. Since $[y_2y_4,y_6]=1$, $y_6^{y_2}=y_6^{y_4^{-1}}$. Then from $[y_1,y_6^{y_2}]=1$ and $[y_1,y_4]=1$ we conclude $[y_1,y_6]=1$. Along with the given relations, this confirms that condition (i) of Theorem~\ref{thm:injtest} holds.  Condition (ii) of Theorem~\ref{thm:injtest} can be verified using Proposition~\ref{prop:simplify} as in the preceding example. It follows that $\rho_\X$ is injective.
\label{ex:kohno}
\end{example}

\begin{example} Let $\A=\D_3$ be the arrangement of type $D_3$ as in Example~\ref{ex:d3}. The complement of $\A$ is homeomorphic to the complement $PB_4$ of the rank three braid arrangement, and $G=G(\A)=P_4$ is the four strand pure braid group. As shown in Example \ref{ex:brunnian}, \A\ has four rank two flats of size three, and the kernel of the resulting product homomorphism $\rho_\X \colon P_4 \to \prod_{\S\in \X} G_\S$ is isomorphic to the group of Brunnian braids on four strands. In particular, $\rho_\X$ is not injective. 

The entire arrangement \A\ itself supports the generating function $f\colon M(\A) \to PB_3$, $f(z_1,z_2,z_3)=(z_1^2,z_2^2,z_3^2)$, noted in Example~\ref{ex:d3}.  
One can thus consider the product homomorphism $\rho_\X \times f_* \colon P_4 \to \prod_{\S\in \X} G_\S \times P_3$ given by all five generating sets on \A. The target is a product of free groups. There are nontrivial Brunnian braids in the kernel of $f_*$, hence $\rho_\X \times f_*$ is not injective. In fact, certain nontrivial Brunnian braids are in the kernel of any homomorphism from $P_4$ to a free group, hence $P_4$ is not residually free, see \cite{CFR11}.
\end{example}

\section{Arrangement groups and right-angled Artin groups}
A right-angled Artin group (RAAG) is a group that has a finite presentation in which all relations are commutators of two generators. The group is determined by the undirected graph whose vertices are the generators, with edges connecting pairs of commuting generators. This family includes products of free groups. 

Suppose \A\ is an affine arrangement, and $\S$ is a rank two flat, or a parallel class in \A. Then $G_\S$ is isomorphic to $\Z \times F_{r-1}$, or to $F_r$, where $r=|\S|$. Consequently, if $\X\subseteq 2^\A$ is a set of rank two flats and parallel classes, the target of 
\[
\rho_\X \colon G \to \prod_{\S\in \X} G_\S
\]
is a product of free groups, hence is a right-angled Artin group. 

In this setting it will be more efficient to replace factors $G_\S$ isomorphic to $\Z \times F_{r-1}$ in this product by the free group $\bar{G}_\S=G_\S/\Z$. For a parallel class $\S$, write $\bar{G}_\S=G_\S$. 
In this section we show that the image of the homomorphism 
\[
G \xrightarrow{\ \rho_\X\ } \prod_{\S \in \X} G_\S \to \prod_{\S \in \X} \bar{G}_\S
\]
is normal, and identify the cokernel with the first integral cohomology of the incidence graph of \X. We also show that projecting to $\prod_{\S\in \X}\bar{G}_\S$ does not affect the kernel of $\rho_\X$. As a result we are able to realize some arrangement groups as subgroups of right-angled Artin groups, drawing conclusions about their qualitative and homological finiteness properties.

\subsection{The cokernel of \texorpdfstring{$\rho_\X$}{rX}} 
Let $G$ be a group with finite set of generators $Y=\{y_1,\ldots, y_n\}$. Assume that $z=y_1 y_2\cdots y_n$ is central in $G$. For example, if $G$ is the fundamental group of the complement of a central arrangement, the braid monodromy presentation yields a set of generators satisfying this condition. We consider subsets of $Y$ to be linearly ordered using the given labelling. Let $\X=\{S_1,\ldots, S_m\} \subseteq 2^Y$. For $1\leq i \leq m$ and $1\leq j\leq n$, let $y_{ij}$ denote the image of $y_j$ in $G_{S_i}$. Then $G_{S_i}$ is generated by $\{y_{ij} \mid j \in S_i\}$, and $y_{ij}=1$ if $j \not \in S_i$. Since $z$ is central in $G$, $z_i=y_{i1}\cdots y_{in}$ is central in $G_{S_i}$, for each $i$.
Let $\bar{G}=G/\langle z\rangle$ and $\bar{G}_{S_i}=G_{S_i}/\langle z_i\rangle$. Let $\rho_\X =\prod \rho_{S_i} \colon G \to \prod_{i=1}^m G_{S_i}$ be the associated homomorphism. The image of $y_j$ under $\rho_\X$ is $\prod_{r=1}^m y_{rj}$. Since $\rho_{S_i}(z)=z_i$, $\rho_\X$ induces a well-defined homomorphism $\bar{\rho}_\X \colon \bar{G} \to \prod_{i=1}^m \bar{G}_{S_i}$. 

Assume further that $\bar{G}_{S_i}$ is a free group of rank $|S_i|-1$, so that the images of any $|S_i|-1$ of the elements $y_{ij}$, $j \in S_i$ form a free basis. Then $\prod_{i=1}^m \bar{G}_{S_i}$ is a right-angled Artin group, whose graph is the complete multipartite graph with parts of sizes $|S_1|-1, \ldots, |S_m|-1$. Viewing $\bar{G}_S$ as a subgroup of $\prod_{S\in \X} \bar{G}_S$, we have $[y_{ij},y_{rk}]=1$ for $i\neq r$. 

Let $A=\prod_{S\in \X} G_S$ and $\bar{A}=\prod_{S\in \X} \bar{G}_S$. 
We have the commutative diagram below.
\[
\xymatrix{
G \ar[r]^{\rho_\X} \ar[d] & A \ar[d]\\
\bar{G} \ar[r]^{\bar{\rho}_\X} & \bar{A}
}
\]

In our application to arrangement groups, $G=G(\A)$ for a central arrangement \A, $\bar{G}$ is the fundamental group of the projectivized complement $\bar{M}$, \X\ is a set of rank two flats of $\A$,
and $\bar{\rho}_\X \colon \bar{G} \to \bar{A}$ is the product of the homomorphisms induced by inclusions of projectivized complements. Rank two flats are not retractive in the projective setting, so our analysis of the kernel does not apply to $\bar{\rho}_\X$, only to $\rho_\X$.

\begin{proposition} Suppose $|S_i \cap S_r|\leq 1$ for all $1\leq i, r \leq m$.
Then the image of $\rho_\X$ is a normal subgroup of $A$. \label{thm:normal}
\label{normal}
\end{proposition}

\begin{proof} Fix integers $i,j,k$ with $1\leq i \leq m$ and $1\leq j,k\leq n$, and consider the conjugate $\rho_\X(y_k)^{y_{ij}}$. If $S_i$ does not contain both $j$ and $k$, then $\rho_\X(y_k)^{y_{ij}}=\rho_\X(y_k)$. Suppose $S_i$ contains both $j$ and $k$. If $r\neq i$ then $S_r$ does not contain both $j$ and $k$, so $y_{rk}^{y_{rj}}=y_{rk}=y_{rk}^{y_{ij}}$. If $r=i$ then $y_{rk}^{y_{rj}}=y_{rk}^{y_{ij}}$.
Then we have
\[
\rho_\X(y_k)^{y_{ij}}  = (\prod_{r=1}^m y_{rk})^{y_{ij}}
= \prod_{r=1}^m y_{rk}^{y_{ij}}
= \prod_{r=1}^m y_{rk}^{y_{rj}}
=(\prod_r y_{rk})^{\prod_r y_{rj}}
=\rho_\X(y_k)^{\rho_\X(y_j)}.
\]
So, in either case, $\rho_\X(y_k)^{y_{ij}}$ lies in the image of $\rho_\X$. %\qed
\end{proof}

Let $\Lambda_\X$ be the bipartite graph with vertex set $\X \cup Y$ and edges $\{S_i,y_j\}$ for $y_j\in S_i$. 
The hypothesis of Proposition~\ref{normal} is equivalent to the condition that $\Lambda_\X$ contain no cycles of length four. For the remainder of this subsection assume that \X\ has this property. 
Denote the image of ${\rho}_\X$ by $N_\X$ and the image of $\bar{\rho}_\X$ by $\image_\X$.

\begin{corollary} The image $\image_\X$ of $\bar{\rho}_\X$ is a normal subgroup of $\bar{A}$. 
\label{cor:normal}
\end{corollary}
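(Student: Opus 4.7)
The plan is to derive this directly from Theorem~\ref{thm:normal} using the given commutative square. Let $\pi\colon A \to \bar{A}$ denote the canonical projection; since $\bar{A}=\prod_{i=1}^m G_{S_i}/\langle z_i\rangle$ is obtained by quotienting $A$ by the subgroup $\prod_{i=1}^m \langle z_i\rangle$ (which is central, hence normal, in $A$), the map $\pi$ is a surjective group homomorphism. The commutativity of
\begin{equation*}
\begin{CD}
G @>\rho>> A\\
@VVV @VVV\\
\bar{G} @>\bar{\rho}>> \bar{A}
\end{CD}
\end{equation*}
together with surjectivity of the left-hand vertical map $G\twoheadrightarrow \bar{G}$ implies that $\mathrm{im}(\bar{\rho})=\pi(\mathrm{im}(\rho))$.

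Now I would invoke the general fact that the image of a normal subgroup under a surjective homomorphism is normal: if $N\trianglelefteq A$ and $\pi\colon A\twoheadrightarrow \bar{A}$ is surjective, then for any $\bar{a}\in \bar{A}$ we can write $\bar{a}=\pi(a)$ and compute $\bar{a}\,\pi(N)\,\bar{a}^{-1}=\pi(aNa^{-1})=\pi(N)$. Applying this with $N=\mathrm{im}(\rho)$, which is normal in $A$ by Theorem~\ref{thm:normal}, yields normality of $\mathrm{im}(\bar{\rho})=\pi(\mathrm{im}(\rho))$ in $\bar{A}$.

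There is no real obstacle here; the only point worth double-checking is that the kernel of $\pi$ is actually a normal subgroup of $A$, which is immediate because each $z_i$ is central in $G_{S_i}$ (a hypothesis already used in setting up $\bar{\rho}$), so $\prod_i\langle z_i\rangle$ is central in $A=\prod_i G_{S_i}$. With that verified, the corollary is a one-line consequence of the theorem.
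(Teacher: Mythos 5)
Your argument is correct and is essentially the paper's own proof: the paper also deduces the corollary in one line by observing that the surjection $A\to\bar{A}$ carries $\rho(G)$ onto $\bar{\rho}(\bar{G})$, so normality follows from Theorem~\ref{thm:normal} and the fact that surjective homomorphisms preserve normality. You have simply written out the routine verifications in more detail.
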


\begin{proof} The surjection $A\to \bar{A}$ maps $N_\X$ onto $\image_\X$. %\qed
\end{proof}

\begin{proposition} The cokernel of  $\rho_\X \colon G \to A$ is abelian.
\label{prop:coker}
\end{proposition}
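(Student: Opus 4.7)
The plan is to show directly that every commutator of generators of $A$ lies in $\rho(G),$ which suffices since $\rho(G)$ is normal by Theorem~\ref{thm:normal}, and so $A/\rho(G)$ abelian is equivalent to $[A,A]\subseteq \rho(G).$

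First I would enumerate the generators of $A$: these are the elements $a_{ij}$ with $1\leq i\leq m$ and $j\in S_i,$ and it is enough to show $[a_{ij},a_{rk}]\in \rho(G)$ for all such generators. The case $i\neq r$ is immediate: $a_{ij}$ and $a_{rk}$ lie in distinct factors of the direct product $A=\prod_i G_{S_i},$ so $[a_{ij},a_{rk}]=1.$

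The key case is $i=r$ with $j,k\in S_i.$ Here I would compute the image of the commutator $[a_j,a_k]\in G$ under $\rho.$ Since $[a_{sj},a_{tk}]=1$ whenever $s\neq t$ (elements in different factors of $A$ commute), expanding $\rho([a_j,a_k])=[\prod_s a_{sj},\prod_t a_{tk}]$ and collecting by factor gives
\[
\rho([a_j,a_k])=\prod_{s=1}^m [a_{sj},a_{sk}].
\]
Now the hypothesis $|S_s\cap S_i|\leq 1$ for $s\neq i$ forces $\{j,k\}\not\subseteq S_s$ for any $s\neq i,$ so one of $a_{sj},a_{sk}$ equals the identity and $[a_{sj},a_{sk}]=1$ for every $s\neq i.$ The product collapses to the single nontrivial term, giving
\[
\rho([a_j,a_k])=[a_{ij},a_{ik}],
\]
which exhibits $[a_{ij},a_{ik}]$ as an element of $\rho(G).$

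Thus every generator commutator of $A$ lies in $\rho(G),$ so $[A,A]\subseteq \rho(G),$ and the cokernel is abelian. There is no real obstacle here; the entire content of the argument is the observation that the condition $|S_i\cap S_r|\leq 1$ is precisely what is needed to collapse the product $\prod_s[a_{sj},a_{sk}]$ to a single factor, making the commutator of generators of one factor the image of the corresponding commutator upstairs in $G.$
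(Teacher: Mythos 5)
Your proof is correct and follows essentially the same route as the paper: the paper likewise notes that $[a_{ij},a_{rk}]=1$ for $i\neq r$ and that $[a_{ij},a_{ik}]=[\rho(a_j),\rho(a_k)]$ when $j,k\in S_i$, the latter identity resting on exactly the collapse of $\prod_s[a_{sj},a_{sk}]$ via the hypothesis $|S_i\cap S_r|\leq 1$ that you spell out. Your write-up just makes that computation explicit.
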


\begin{proof} As in proof of Proposition~\ref{normal}, we observe that $[y_{ij},y_{ik}]=[\rho_\X(y_j),\rho_\X(y_k)]$ if $S_i$ contains both $j$ and $k$, by the assumption on \X, and is trivial otherwise. Since $[y_{ij},{y_{rk}}]=1$ if $r\neq i$ this shows that $A/N_\X$ is abelian.  %\qed
\end{proof}

\begin{corollary} The cokernel of  $\bar{\rho}_\X\colon \bar{G} \to \bar{A}$ is abelian.
\end{corollary}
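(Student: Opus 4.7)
The plan is to leverage the preceding proposition (that $A/\rho(G)$ is abelian) together with the commutative square relating $\rho$ and $\bar\rho$, by exhibiting a surjection from $\coker(\rho)$ onto $\coker(\bar\rho)$.

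First I would observe that the right-hand vertical map $\pi\colon A \to \bar{A}$ in the square is the quotient by the subgroup $\prod_i \langle z_i\rangle$, where $z_i=a_{i1}\cdots a_{in}$, and is in particular surjective. Since $\pi\circ \rho = \bar\rho \circ q$, where $q\colon G\to \bar G$ denotes the quotient by $\langle z\rangle$, the image $\pi(\rho(G))$ coincides with $\bar\rho(\bar G)$ (this is essentially the content of the proof of Corollary~\ref{cor:normal}). Consequently $\pi$ descends to a well-defined surjective homomorphism
\[
\tilde\pi\colon A/\rho(G) \twoheadrightarrow \bar A/\bar\rho(\bar G).
\]

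Next, by the preceding proposition the source $A/\rho(G)$ is abelian; hence its homomorphic image $\bar A/\bar\rho(\bar G)=\coker(\bar\rho)$ is also abelian, which is the desired conclusion.

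There is no real obstacle here: the statement is a formal consequence of the previous proposition plus the commutativity of the square, and the only thing one must check is that $\pi$ carries $\rho(G)$ onto $\bar\rho(\bar G)$, which is immediate from surjectivity of $q$ and the relation $\pi\circ\rho=\bar\rho\circ q$.
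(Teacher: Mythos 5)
Your proposal is correct and is essentially the paper's own argument: the paper's proof consists of the single observation that $\bar{A}/\bar{\rho}(\bar{G})$ is a quotient of $A/\rho(G)$, which you have simply spelled out in more detail (the surjection $\pi\colon A\to\bar{A}$ carries $\rho(G)$ onto $\bar{\rho}(\bar{G})$, so it descends to a surjection of cokernels). No differences worth noting.
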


\begin{proof} The group $\bar{A}/\image_\X$ is a quotient of $A/N_\X$. %\qed
\end{proof}

The fact that $\bar{A}/\image_\X$ is abelian can also be deduced directly from the normality of $\image_\X$ and the fact that it surjects onto each factor of $\bar{A}$, by a result of \cite{BriMi09}.

We denote the abelianization of a group or homomorphism by appending the subscript $\ab$. So, for example, $A_{\ab}=A/[A,A]$. 

Assume that $G_{\ab}$ and $A_{\ab}$ are free abelian, with free 
bases given by the images of $y_1, \ldots, y_n$ and $y_{ij}$, $j \in S_i$, respectively. 
This implies in particular that the central elements $z$ and $z_i, 1\leq i \leq m$ have infinite order. 
Again, this hypothesis holds if $G=G(\A)$ for a central arrangement \A\ and $\{y_1,\ldots, y_n\}$ is a standard set of generators of $G$, ordered appropriately.

Denote the images of $y_k$ and $y_{ij}$ in $G_{\ab}$ and $A_{\ab}$ by $b_k$ and $b_{ij}$, respectively. (So $b_{ij}=0$ if $y_j \not \in S_i$.) Then $\bar{G}_{\ab}$ is the quotient of $G_{\ab}$ by the subgroup generated by $\sum_{k=1}^n b_k$, and $\bar{A}_{\ab}$ is the quotient of $A_{\ab}$ by the subgroup generated by 
\[
\left\{\sum_{j=1}^n b_{ij} \mid 1\leq i\leq m\right\}. 
\]
The latter subgroup will be denoted by $J$. 

\begin{corollary} The homomorphisms $\bar{\rho}_\X \colon \bar{G} \to \bar{A}$ and 
$(\bar{\rho}_\X)_{\ab}\colon \bar{G}_{\ab} \to \bar{A}_{\ab}$ have isomorphic cokernels.
\label{cor:abcoker}
\end{corollary}

\begin{proof}
Since $\bar{A}/\image_\X$ is abelian, it is a quotient of $\bar{A}_{\ab}$, and the kernel of the induced map $\bar{A}_{\ab} \to \bar{A}/\image_\X$ is easily seen to be $(\bar{\rho}_\X)_{\ab}(\bar{G}_{\ab})$.
%\qed
\end{proof}

\begin{theorem} The cokernel of $\bar{\rho}_\X \colon \bar{G} \to \bar{A}$ is isomorphic to the integral simplicial cohomology group $H^1(\Lambda_\X,\Z)$ of the 1-dimensional simplicial complex $\Lambda_\X$.
\end{theorem}
\label{thm:DM}
\begin{proof} By the preceding result we need only identify the cokernel of $(\bar{\rho}_X)_{\ab}$. The group $A_{\ab}$ is naturally identified with the additive group of integer edge-labelings of $\Lambda_\X$, that is, with the group $C^1(\Lambda_\X,\Z)$ of integral simplicial cochains on the simplicial complex $\Lambda_\X$. The generator $b_{ij}$ corresponds to the indicator function on the edge $\{S_i,y_j\}$. The generator $\sum_{j=1}^n b_{ij}$ of the subgroup $J$ defined above is precisely the coboundary of the vertex $S_i$ of $\Lambda_\X$, considered as a 0-cochain on $\Lambda_\X$, up to sign. Similarly, the image $(\rho_\X)_{\ab}(b_j)=\sum_{i=1}^m b_{ij}$ of $b_j \in G_{\ab}$ is the coboundary $\delta(y_j)$ of the $0$-cochain $y_j$, up to sign. (Recall $b_{ij}=0$ if $y_j \not \in S_i$.)
Denote the subgroup generated by these elements by $I$. We have $\bar{A}_{\ab} = A_{\ab}/J$ and $\image_\X = I+J/J$, so that $\bar{A}_{\ab}/\image_\X$ is isomorphic to $\bar{A}_{\ab}/I+J$. The latter group is the quotient of $C^1(\Lambda_\X,\Z)$ by the image of $\delta \colon C^0(\Lambda_\X, \Z) \to C^1(\Lambda_\X,\Z)$; since $\Lambda_\X$ is one-dimensional, this quotient is $H^1(\Lambda_\X,\Z)$.
%\qed
\end{proof}

\begin{corollary} The cokernel of $\bar{\rho}_\X \colon \bar{G} \to \bar{A}$ is free abelian of rank 
\[
\sum_{S\in \X} |S| -n -m+c,
\]
where $n=|Y|$, $m=|\X|$, and $c$ is the number of components of $\Lambda_\X$.
\label{cokernel}
\end{corollary}

\begin{proof} The first statement is immediate from Corollary~\ref{cor:abcoker} and the preceding theorem. The rank formula follows from a simple Euler characteristic calculation. %\qed
\end{proof}

\subsection{Injectivity of \texorpdfstring{$\bar{\rho}_\X$}{}}
The results of the Section~3  
do not apply directly in the projective setting because the projection $\bar G \to \bar{G}_S$ need not split. At the same time, it is problematical to apply our injectivity criteria to central rank three arrangements. In this subsection we resolve these issues.

\begin{proposition} Suppose $\Lambda_\X$ is connected. Then the kernel of $\rho_\X$ projects isomorphically onto the kernel of $\bar{\rho}_\X$.
\label{prop:robar}
\end{proposition}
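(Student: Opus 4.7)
My plan is to perform a diagram chase on the commutative square with exact rows
\begin{equation*}
\begin{CD}
1 @>>> \langle z \rangle @>>> G @>>> \bar G @>>> 1\\
@. @VV\rho_0 V @VV\rho V @VV\bar\rho V \\
1 @>>> \prod_{i=1}^m \langle z_i \rangle @>>> A @>>> \bar A @>>> 1,
\end{CD}
\end{equation*}
where $\rho_0$ is the restriction of $\rho$ to $\langle z\rangle$ and $\rho_0(z)=(z_1,\ldots,z_m)$. For injectivity of the induced map $\ker\rho \to \ker\bar\rho$, suppose $g\in\ker\rho$ projects to $1$ in $\bar G$. Then $g=z^k$ for some $k\in\Z$, and $\rho(g)=(z_1^k,\ldots,z_m^k)=1$. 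Since the image of $z_i$ in the free abelian group $G_{S_i,\ab}$ is the nonzero element $\sum_{j\in S_i}b_{ij}$, each $z_i$ has infinite order, so $k=0$ and $g=1$.

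For surjectivity, let $\bar g\in\ker\bar\rho$ and pick any lift $g\in G$. Then $\rho(g)$ lies in $\ker(A\to\bar A)=\prod_i\langle z_i\rangle$, so $\rho(g)=(z_1^{k_1},\ldots,z_m^{k_m})$ for some integers $k_1,\ldots,k_m$. If we can show these integers coincide with a common value $k$, then $g'=gz^{-k}$ satisfies $\rho(g')=1$ and still projects to $\bar g$, completing the proof.

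The heart of the argument, and the only place connectedness of $\Lambda_\X$ is used, is showing $k_1=\cdots=k_m$. My plan is to pass to abelianizations. Writing $g_\ab=\sum_j e_j b_j$ in $G_\ab$, the component of $\rho_\ab(g_\ab)$ in $G_{S_i,\ab}$ is $\sum_{j\in S_i} e_j b_{ij}$, and this must equal the abelianization of $z_i^{k_i}$, which is $k_i\sum_{j\in S_i} b_{ij}$. Since $G_{S_i,\ab}$ is free abelian on $\{b_{ij} \mid j\in S_i\}$, we conclude $e_j=k_i$ whenever $(H_j,S_i)$ is an edge of $\Lambda_\X$. Given any two flat-vertices $S_i,S_{i'}\in\X$, connectedness of $\Lambda_\X$ provides a walk $S_i=S_{i_0},H_{j_0},S_{i_1},H_{j_1},\ldots,S_{i_r}=S_{i'}$ alternating between flat- and hyperplane-vertices, and the chain of equalities $k_{i_t}=e_{j_t}=k_{i_{t+1}}$ along this walk forces $k_i=k_{i'}$.

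I expect the bipartite walk argument itself to be routine; the conceptual step that requires some care is verifying that the naive abelianization calculation really does determine $k_i$, which relies on the standing hypothesis that $A_\ab$ is free abelian on the $b_{ij}$ (so that the factors $G_{S_i,\ab}$ are free abelian on their respective $b_{ij}$). Granted this, the rest is a clean diagram chase.
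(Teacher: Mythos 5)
Your proof is correct and follows essentially the same route as the paper's: both reduce surjectivity onto $\ker(\bar\rho)$ to showing the exponents $k_i$ agree, identify $k_i$ with the exponent sum of $a_j$ in $g$ for $H_j\in S_i$ via abelianization, and propagate equality along an alternating walk in the bipartite graph $\Lambda_\X$. The only difference is cosmetic — you normalize by $z^{-k}$ at the end, whereas the paper normalizes $k_1=0$ at the outset and phrases the statement as $\rho(G)\cap\ker(q)=\langle\rho(z)\rangle$.
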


\begin{proof} Denote the projections $G\to \bar{G}$ and $A\to \bar{A}$ by $p$ and $q$ respectively. The kernel of $p$ is generated by the central element $y_1\cdots y_n$, hence intersects $\ker(\rho_\X)$ trivially. Thus $\ker(\rho_\X)$ injects into $\ker(\bar{\rho}_\X)$. 

If $\delta=\rho_\X(y_1\cdots y_n)$, then $\delta \in \ker(q)$. To show that $\ker(\rho_\X)$ maps onto $\ker(\bar{\rho_\X})$ it suffices to show that $\rho_\X(G)\cap \ker(q)=\langle \delta \rangle$. The image of $\rho_\X$ is generated by $\{\prod_i y_{ij} \mid 1\leq j\leq n\}$. The kernel of $q$ is the free abelian group with basis $\{\prod_j y_{ij} \mid 1\leq i \leq m\}$.  Suppose $a\in \rho_\X(G)\cap \ker(q)$, and let $g\in G$ with $a=\rho_\X(g)$. Since $a\in \ker(q)$, we may write $a=\prod_i(\prod_j y_{ij})^{k_i}$ for some integers $k_i, 1\leq i \leq m$. Note that $k_i$ is equal to the exponent sum of $y_j$ in $g$, for any $y_j\in S_i$. 

Replacing $a$ by $a'=a\delta^{-k_1}$ we may assume $k_1=0$. This implies the exponent sum of $y_j$ is zero, for any $y_j\in S_1$. For $2\leq i\leq m$, choose a path $(S_1, y_{i_1}, S_{i_2}, y_{i_2}, \ldots, y_{i_k}, S_i)$ in $\Lambda_\X$ from $S_1$ to $S_i$. The exponent sum of $y_{i_1}$ in $g$ is zero, which implies $k_{i_2}=0$ by the observation above. Then the exponent sum of $y_{i_2}$ in $g$ is 0. Then $k_{i_3}=0$.  Continuing in this way we conclude that $k_i=0$. Thus $a'=1$, so $ a=\delta^{k_1} \in \langle \delta \rangle$. %\qed
\end{proof}

Finally we adapt the preceding result to groups of affine arrangements, which will 
facilitate checking 
the conditions of Theorem~\ref{thm:injtest} in our examples. Let $\widehat{G}$ be the subgroup of $G$ generated by $\{y_1,\ldots, y_{n-1}\}$. Assume that $G \cong \widehat{G}\times \langle z\rangle$. In particular, $\widehat{G}$ is isomorphic to $\bar{G}$. Similarly, for $1\leq i \leq m$, let $\widehat{G}_{S_i}$ be the subgroup of $G_{S_i}$ generated by $\{y_{ij} \mid 1\leq j <n\}$. Then $\widehat{G}_{S_i}=G_{S_i}$ if $y_n\not \in S_i$, and $\widehat{G}_{S_i}$ is a free group of rank $|S_i|-1$ isomorphic to $\bar{G}_{S_i}$ if $y_n \in S_i$. 
Note that 
$G_{S_i} \cong \langle z_i\rangle  \times \widehat{G}_{S_i}$ if $n \in S_i$. Write $\widehat{A}=\prod_{i=1}^m \widehat{G}_{S_i}$, and observe that $\rho_\X(\widehat{G})\subseteq \widehat{A}$. Let $\widehat{\rho}_\X \colon \widehat{G} \to \widehat{A}$ be the restriction of $\rho_\X$.

If $G=G(\A)$ for a central arrangement $\A=\{H_i\}_{i=1}^n$, and $\{y_1, \ldots, y_n\}$ is a standard set of generators of $G$ with $y_1 \cdots y_n$ central, then $\widehat{G}=G(d\A)$ is the fundamental group of the decone of $\A$ (with respect to the hyperplane $H_n$), see \cite{OT92}, and all the assumptions in the previous paragraph hold. Note also, if \X\ is a set of rank two flats, then $|S_i \cap S_r|\leq 1 $ for $1 \leq i,r \leq m$, so that the results of the preceding subsection apply.

\begin{corollary}  
If $\Lambda_\X$ is connected, $z=y_1\cdots y_n$ has infinite order in $G$, and  $y_n \not \in S_i$ for some~$i$, 
then $\bar{\rho}_\X$ is injective if and only if $\widehat{\rho}_\X$ is injective. 
\label{cor:affinj}
\end{corollary}

\begin{proof} Necessity is immediate since the restriction of $p \colon G \to \bar{G}$ to $\widehat{G}$ is an isomorphism. Suppose $\hat\rho_\X$ is injective, and let $\bar{g} \in \ker(\bar{\rho}_\X)$. By Proposition~\ref{prop:robar}, there exists $g \in \ker(\rho_\X)$ with $p(g)=\bar{g}$. Also there exists $g_0 \in \widehat{G}$ such that $p(g_0)=\bar{g}$. Then $g_0=gz^k$ for some $k \in \Z$, and $\rho_\X(g_0)=\rho_\X(z^k)$. The quotient of $A$ by the normal subgroup $\widehat{A}$ is free abelian, generated by $\{z_i \mid y_n \not \in S_i\}$, and is not trivial by hypothesis. The image of $\rho_\X(z)^k$ in this quotient is trivial, since $\rho_\X(\widehat{G})\subseteq \widehat{A}$, and this implies $k=0$. 
Thus $g=g_0$ so $\rho_\X(g_0)=\rho_\X(g)=1$. Then $g_0=1$, and $p(g_0)=\bar{g}=1$. %\qed
\end{proof}

\begin{example} Let $\A\subset\C^3$ be the cone of the affine arrangement of Example~\ref{ex:arvola}; \A\ is defined by 
$Q=x(x-z)(x+2y)(y+z)yz$.  With the hyperplanes labelled as in Figure~\ref{fig:arvola2}(a), let 
$
\X=\{\{H_1,H_3,H_5\},\{H_1,H_2,H_6\},\{H_3,H_4,H_6\}\}
$. 
The graph $\Lambda_\X$, illustrated in Figure~\ref{fig:arvola2}(b), is connected. 

\begin{figure}[h]
\begin{subfigure}[t]{.49 \textwidth}
\centering
\includegraphics[scale=.2]{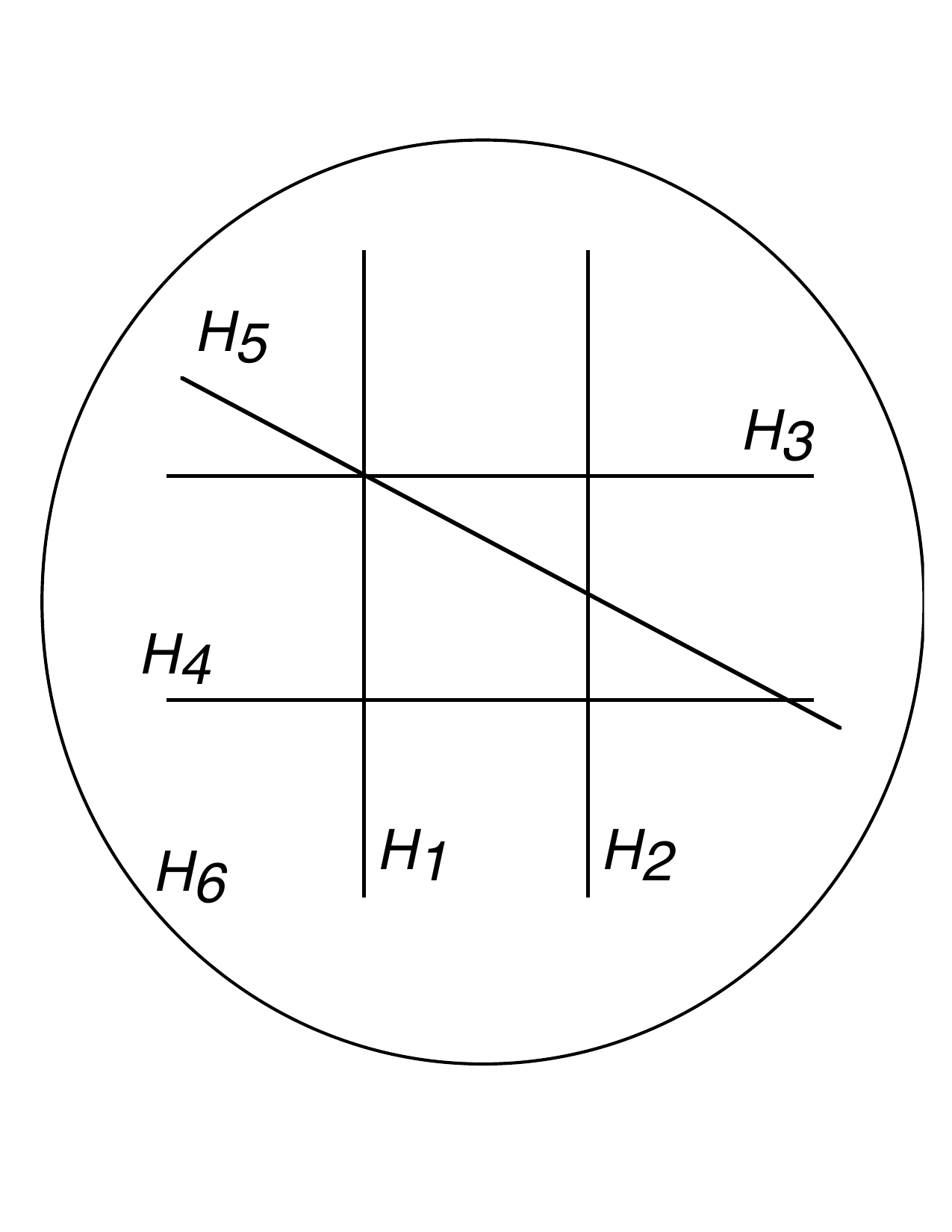}
\caption{\A}
\end{subfigure}
\begin{subfigure}[t]{.49 \textwidth}
\centering
\includegraphics[scale=.2]{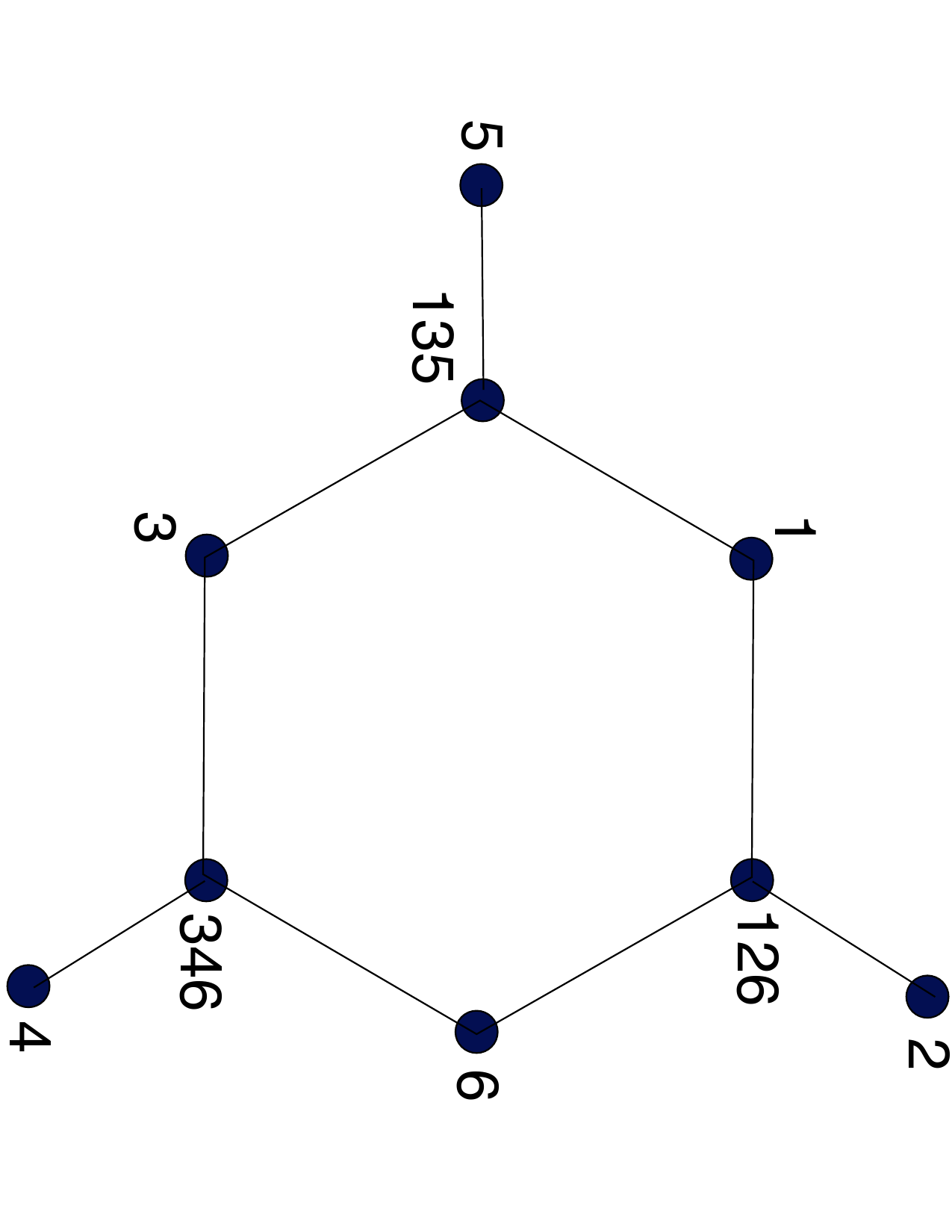}
\caption{$\Lambda_\X$}
\end{subfigure}
\caption{Example \ref{ex:arvola2}}
\label{fig:arvola2}
\end{figure}

We have $\bar{A}\cong F_2\times F_2\times F_2$, while $\bar{A}/\image_\X\cong \Z$, since it is free abelian of rank $(3+3+3) -6-3+1=1$. 
One can choose the generators of $F_2\times F_2 \times F_2$ so that each maps to the same generator of the quotient, see Proposition~\ref{prop:bestbr} for a more general result.  Then $\image_\X=\bar{\rho}_\X(\bar{G})$ is isomorphic to the Stallings group \cite{Stall63}, the kernel of the map $F_2\times F_2\times F_2 \to \Z$ sending every canonical generator to 1. 

The standard set of generators for the deconed arrangement in Example~\ref{ex:arvola} can be extended to a standard set of generators $\{y_1,y_2,y_3,y_4,y_5,y_6\}$ of $G$, with $y_6$ corresponding to the line at infinity, and $y_1\cdots y_6$ central. Then the restriction $\widehat{\rho}_\X \colon \widehat{G} \to \widehat{A}$ is exactly the homomorphism analyzed in that example, where it was shown to be injective. Then $\bar{\rho}_\X$ is injective by Corollary~\ref{cor:affinj}, so in fact $\bar{G}$ is isomorphic to the Stallings group. This was first observed by D.~Matei and A.~Suciu \cite{MatSuc}.
\label{ex:arvola2}
\end{example}

\subsection{Qualitative properties of arrangement groups}
Now we combine the descriptions of the kernel and cokernel of $\rho_\X$ to draw some conclusions about arrangement groups.
We record some properties of $\image_\X$, immediate from the definition and from properties of free groups. Recall that a discrete group has the Haagerup property, or is a-T-menable, if it acts properly and isometrically on an affine Hilbert space -- see \cite{CCJJV01}. Free groups have the Haagerup property, as do subgroups and finite direct products of groups with the Haagerup property.

\begin{theorem} Let 
$\X\subset 2^\A$ be a set of rank two flats of a central arrangement $\A$, 
$\bar{\rho}_\X\colon \bar{G} \to \prod_{S\in\X} \bar{G}_S$  and $\image_\X=\im \bar\rho_\X$ as above. Then 
\begin{enumerate}
\item $\image_\X$ is residually free.
\item $\image_\X$ is torsion-free.
\item $\image_\X$ is residually torsion-free nilpotent.
\item $\image_\X$ has solvable word and conjugacy problems.
\item $\image_\X$ has a faithful linear representation.
\item $\image_\X$ is residually finite.
\item $\image_\X$ has the Haagerup property.
\end{enumerate}
If $\bar{\rho}_\X$ is injective, then $\bar{G}$ has these properties.
\label{thm:properties}
\end{theorem}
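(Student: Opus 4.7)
The plan is to establish the eight properties of $\image$ by exploiting its structure as a finitely generated subgroup of $\bar{A} = \prod_{S \in \X} \bar{G}_S$, a finite direct product of finitely generated free groups (each $\bar{G}_S$ being free of rank $|S|-1$). For properties (ii)--(iv) and (vi)--(viii) the strategy is uniform: each property holds for finitely generated free groups by classical results (residual freeness trivially; residual torsion-free nilpotence by Magnus; linearity via the embedding into $SL_2(\Z)$; residual finiteness via Nielsen; and the Haagerup property via the proper action on the Cayley tree), and each is preserved under finite direct products and under passage to subgroups. In the direct product case, any nontrivial element has a nontrivial coordinate, which supplies the nontrivial quotient required in each residual argument; linearity is preserved by block-diagonal embeddings; and the Haagerup property is known to be preserved by finite direct products and subgroups of countable groups. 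Closure under subgroups is elementary in each case, giving (ii)--(iv), (vi)--(viii) for $\image$.

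For (i), the plan is to give an explicit combinatorial description of $\image$ inside $\bar{A}$: the generator $\bar{\rho}(a_H)$ is the tuple whose $S$-coordinate is the canonical generator $\bar{a}_H \in \bar{G}_S$ when $H \in S$ and is trivial otherwise. Both the free ranks $|S|-1$ and the incidences $H \in S$ are recorded by the matroid of \A\ together with the choice of \X, so a presentation of $\image$ as a subgroup of $\bar{A}$ can be extracted from this combinatorial data, and $\image$ is determined up to isomorphism by the matroid of \A\ (and the distinguished subset \X\ of rank-two flats).

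Property (v) will be the main obstacle. Solvability of the word problem is immediate, since $\image$ is finitely generated and embeds into $\bar{A}$, whose word problem is trivially solvable. The conjugacy problem is more delicate, because finitely generated subgroups of $F_2 \times F_2$ can already have unsolvable conjugacy problem (C.~F.~Miller). My plan is to invoke the theorem of Bridson--Wilton that finitely presented residually free groups have solvable conjugacy problem, together with finite presentability of $\image$ in the nontrivial range $m = |\X| \geq 3$, which follows from the type $F_{m-1}$ result proved later in the paper (the small cases $m \leq 2$ reduce to free groups or to one-relator considerations and are handled directly).

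Finally, the concluding assertion that $\bar{G}$ inherits properties (ii)--(viii) when $\bar{\rho}$ is injective is immediate from the isomorphism $\bar{G} \cong \image$ in that case.
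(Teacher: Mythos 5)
The paper offers no argument for this theorem beyond the remark that the properties are ``immediate from the definition and properties of free groups,'' so your write-up is essentially a careful expansion of the intended proof. Items (ii)--(iv) and (vi)--(viii) are handled exactly as the authors intend: each property holds for finitely generated free groups and is inherited by finite direct products and by subgroups, and your justifications are correct. Your reading of (i) is also the intended one: $\image$ is generated inside $\bar{A}=\prod_{S\in\X}\bar{G}_S$ by the tuples $\bar\rho(a_H)$, and both the ranks $|S|-1$ and the incidences $H\in S$ are matroid data once $\X$ is fixed. You are also right, and more careful than the paper, to flag that (v) is the one assertion that is \emph{not} immediate, precisely because of the Mihailova--Miller subgroups of $F_2\times F_2$ with unsolvable conjugacy problem.

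The gap is in your repair of (v). You propose to deduce solvability of the conjugacy problem from finite presentability of $\image$ together with the theorem (due to Bridson--Howie--Miller--Short rather than Bridson--Wilton) that finitely presented residually free groups have solvable conjugacy problem. But finite presentability of $\image$ is not available at this point, and is not true in general: the paper's finiteness results only give type $F_{m-1}$ under the additional hypothesis that every vertex of $\Gamma$ is living, and when $m=|\X|=2$ the image can be a Bieri--Stallings-type kernel of a map $F_k\times F_l\to\Z$, which is finitely generated but not finitely presented. Your dismissal of ``the small cases $m\le 2$'' as reducing to free groups or one-relator considerations does not address this. The fix is to use structure the paper has already established: $\image$ is a finitely generated \emph{normal} subgroup of $\bar{A}$ with finitely generated (free) abelian quotient (Theorem~\ref{thm:normal} and its corollaries). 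Given $g,h\in\image$, one decides conjugacy in $\bar{A}$ coordinatewise; if they are conjugate, the full set of conjugators is a coset $a_0\,C_{\bar A}(g)$, where $C_{\bar A}(g)=\prod_S C_{\bar G_S}(g_S)$ is an explicitly computable finitely generated subgroup (each factor is cyclic or all of $\bar G_S$); then $g$ and $h$ are conjugate in $\image$ if and only if the image of $a_0$ in the finitely generated abelian group $\bar A/\image$ lies in the image of $C_{\bar A}(g)$, a decidable membership question. This argument is uniform in $m$ and needs no finiteness properties beyond finite generation.
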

\begin{proof}
The direct product of finitely generated free groups $\prod_{S\in \X} \bar{G}_S$ has all these properties, and $\image_\X$ is a subgroup. %\qed
\end{proof}

The results of \cite{MMV98} allow us to determine the homological finiteness type of $\image_\X$ (and hence of $\bar{G}$ if $\rho_\X$ is injective). Recall that a group $G$ is of type $F_k$ if there is a $K(G,1)$ with finite $k$-skeleton. Let $\Gamma$ be the graph associated with the right-angled Artin group $\bar{A}$. Then $\Gamma$ is isomorphic to the complete multipartite graph with parts $\Gamma_i$ of cardinality $k_i=|S_i|-1$ for $1 \leq i \leq m$. 
The vertices of $\Gamma$ correspond to certain of the generators $b_{ij}$, which again correspond to edges of $\Lambda$. For each $i$ choose $y_{j_i} \in S_i$, and let $S_i'=S_i-\{y_{j_i}\}$. Then the vertices of $\Gamma$, that is, the generators in a RAAG presentation of $\bar{A}$, can be taken to be $\{b_{ij} \mid 1 \leq i \leq m, y_j \in S_i'\}$. Let us say such a vertex $b_{ij}$ of $\Gamma$, or the corresponding edge of $\Lambda_\X$, is {\em living} if it has nontrivial image in $\bar{A}/\image_\X$.

\begin{proposition} An edge of $\Lambda_\X$ is living if and only if it is not an isthmus of $\Lambda_\X$.
\label{living}
\end{proposition}

\begin{proof} Having identified $\bar{A}/\image_\X$ with the first cohomology group $H^1(\Lambda_\X,\Z)$, one sees that the living edges of $\Lambda_\X$ are those that appear in cycles in $\Lambda_\X$, which are, by definition, the non-isthmuses of $\Lambda_\X$. %\qed
\end{proof}

We require the notion of $k$-acyclic-dominating subcomplex from \cite{MMV98}. The definition is recursive. Recall a complex $\KK$ is $k$-acyclic if $\tilde{H}_i(\KK,\Z)=0$ for $0 \leq i \leq k$.

\begin{definition} Let $\LLL$ be a simplicial complex. A subcomplex $\KK$ of the complex $\LLL$ is $(-1)$-acyclic-dominating if $\KK$ is nonempty. For $k \geq 0$, a subcomplex $\KK$ of the complex $\LLL$ is $k$-acyclic-dominating if for every vertex $v$ of $\LLL-\KK$, $\lk_\LLL(v) \cap \KK$ is $(k-1)$-acyclic and $(k-1)$-acyclic-dominating in $\lk_\LLL(v)$.
\end{definition}

\begin{lemma} Let $m \geq 2$ and $\LLL=\LLL_1 \ast \cdots \ast \LLL_m$ where $\LLL_i$ is a nonempty 0-dimensional complex for $1 \leq i \leq m$. Let $\KK_i$ be a nonempty subset of $\LLL_i$ for $1 \leq i \leq m$, and let $\KK=\KK_1 \ast \cdots \ast \KK_m$. Then $\KK$ is $(m-2)$-acyclic and is $(m-2)$-acyclic-dominating in $\LLL$.
\label{dom}
\end{lemma}

\begin{proof} The first statement holds because $\KK$ is a join of $m$ nonempty $0$-dimensional complexes, hence is a bouquet of $(m-1)$-spheres. For the second statement, induct on $m$. Note, if $v \in \LLL_i$ then $\lk_\LLL(v)=\bigast_{r \neq i} \LLL_r$ is a bouquet of $(m-2)$-spheres, and $\lk_\LLL(v) \cap \KK=\bigast_{r \neq i} \KK_r$ is as well, by the assumption that $\KK_r$ is nonempty for each $r$. Then, if $m=2$, $\lk_\LLL(v) \cap \KK$ is nonempty, so $\KK$ is $0$-acyclic-dominating in $\LLL$, and, if $m \geq 3$, then $\lk_\LLL(v) \cap \KK$ is $(m-3)$-acyclic, and is $(m-3)$-acyclic-dominating in $\lk_\LLL(v)$ by the inductive hypothesis. Then $\KK$ is $(m-2)$-acyclic-dominating in $\LLL$, completing the induction. %\qed
\end{proof}

Let $\LLL=\Fl(\Gamma)$ denote the flag complex of $\Gamma$, the simplicial complex whose $p$-simplices are the cliques of size $p+1$ in $\Gamma$. Since $\Gamma$ is a complete multipartite graph, $\LLL$ is a join of $m$ zero-dimensional complexes $\LLL_i=\Fl(\Gamma_i)$, $1 \leq i \leq m$. Let $\KK=\KK(\Gamma)$ be the full subcomplex of $\Gamma$ on the set of living vertices of $\Gamma$. Then $\KK=\KK_1 \ast \cdots \ast \KK_m$ where $\KK_i=\KK \cap \LLL_i$.

\begin{theorem} Suppose $\Lambda_\X$ is connected. Then $\image_\X$ is of type $F_{m-1}$ and not of type $F_m$.
\label{type}
\end{theorem}

\begin{proof} We apply the main theorem of \cite{MMV98}, which in our setting states that the kernel 
$\image_\X$ of the map $\bar{A} \to \bar{A}/\image_\X$ has type $F_k$ if and only if $\KK$ is $(k-1)$-acyclic and is a $(k-1)$-acyclic-dominating subcomplex of $\LLL$. 
Since $\Lambda_\X$ is connected, one can choose $y_{j_i} \in S_i$ so that $S_i-\{y_{i_j}\}$ contains a generator of $A$ corresponding to a non-isthmus.
Proposition~\ref{living} and Lemma~\ref{dom} then imply that $\image_\X$ is of type $F_{m-1}$. Moreover, $\KK$ is not $(m-1)$-connected, so $\image_\X$ is not of type $F_m$. %\qed
\end{proof}

\begin{corollary} Suppose $G=G(\A)$ for a central arrangement \A, and \X\ is a set of $m \geq 2$ rank two flats 
such that $\rho_\X$ is injective, and $\Lambda_\X$ is connected. 
Then $M(\A)$ is not aspherical.
\end{corollary}

\begin{proof} First, $M(\A)$ is aspherical if and only if $\bar{M}(\A)$ is. The projective complement $\bar{M}(\A)$ has the homotopy type of a finite complex, which provides a finite $K(\bar{G},1)$ if $\bar{M}(\A)$ is aspherical, which then implies $\bar{G}$ is of type $F_m$ for all $m \geq 0$. %\qed
\end{proof}

\begin{example} 
For the arrangement $\A$ of Example~\ref{ex:arvola2}, identify 
$\bar{A}\cong F_2 \times F_2 \times F_2$ with the subgroup $\widehat{A}$ of $A$. Let $S_1=\{y_1,y_2,y_6\}, S_2=\{y_1,y_3,y_5\},S_3=\{y_3,y_4,y_6\}$, and $\X=\{S_1,S_2,S_3\}$. Choose $y_{j_1}=y_2$, $y_{j_2}=y_5$, and $y_{j_3}=y_4$. Then
$\Lambda_\X$ is connected, 
and $\image_\X$ is of type $F_2$ (that is, $\bar{G}$ is finitely-presented) but not of type $F_3$. Hence $\bar{G}$ is of type $F_2$ but not of type $F_3$. This was first established by different methods in unpublished work of Arvola \cite{Arv92}, which motivated Matei and Suciu's identification of this group with the Stallings group. The details of their computation appear in \cite{Suc14}.
\label{ex:arvola3}
\end{example}

\begin{example} \label{ex:4not5}
Let $\A=\{H_1,\dots,H_7\}$ be the arrangement in $\C^3$ obtained by coning the arrangement of Example~\ref{ex:kohno}. The set of generators of that example can be extended to a standard set of generators $\{y_1,\ldots, y_7\}$ of $G$ with $y_1\cdots y_7$ central in $G$. Let 
\[
\begin{aligned}
\X&=\{S_1,\ldots,S_5\}\\
&=\{\{H_1,H_3,H_5\},\{H_2,H_4,H_6\},\{H_1,H_2,H_7\},\{H_3,H_4,H_7\},
\{H_5,H_6,H_7\}\}.
\end{aligned}
\]
By Example~\ref{ex:kohno} and Corollary~\ref{cor:affinj}, $\bar{\rho}_\X$ is injective. Thus, $\bar{G}$ 
satisfies properties (i)--(vii) of Theorem \ref{thm:properties}. 
The graph $\Lambda_\X$ is connected, so 
$\bar{G}$ is of type $F_4$ but not $F_5$ by Theorem~\ref{type}.
\end{example}

\begin{example}
Similar calculations apply to two (combinatorially distinct) seven-line arrangements which appear in \cite{Fa97}, illustrated in Figure~\ref{fig:seven}. In these cases the groups are of type $F_3$ but not $F_4$.

\begin{figure}[h]
\begin{subfigure}{.49 \textwidth}
\centering
\includegraphics[scale=.2]{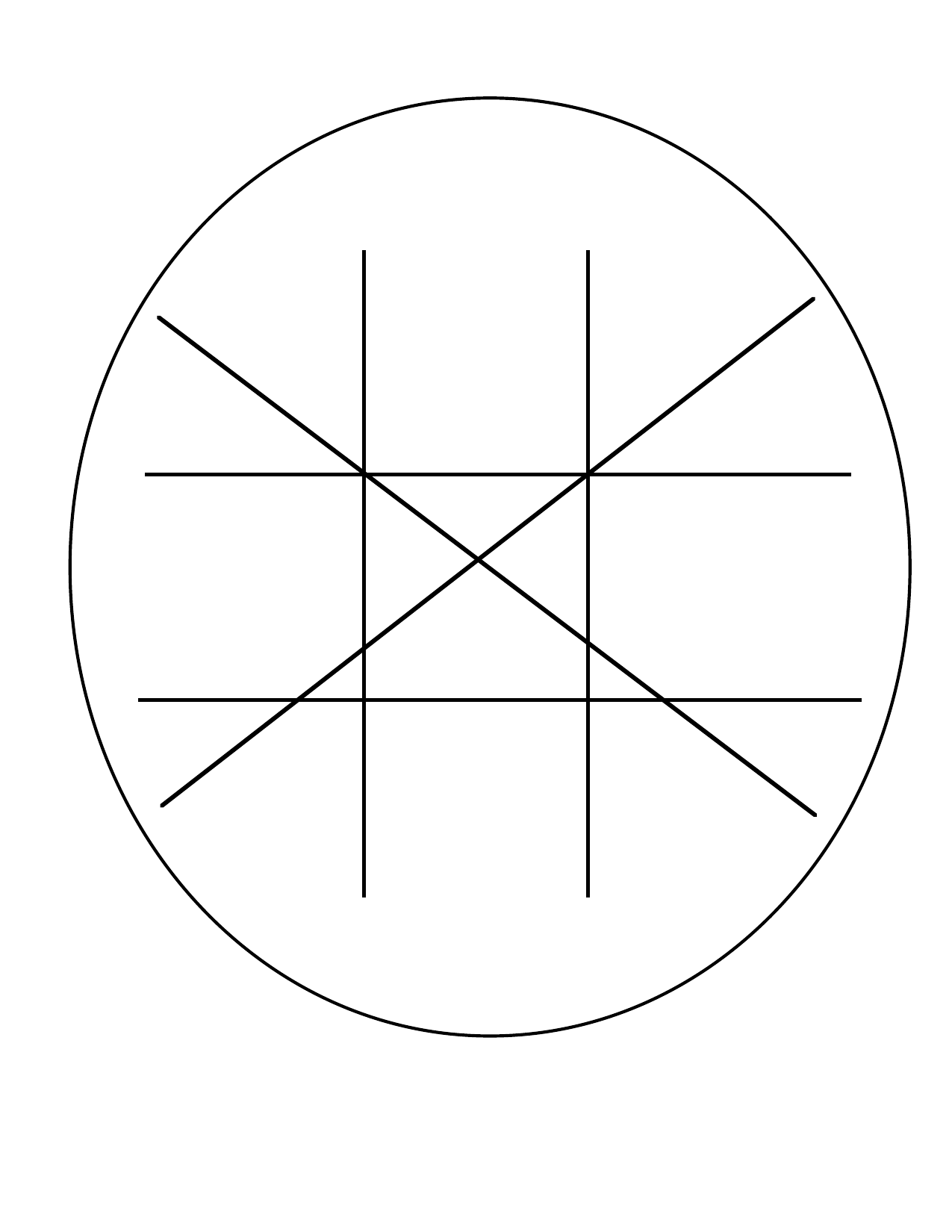}
\end{subfigure}
\begin{subfigure}{.49 \textwidth}
\centering
{\includegraphics[scale=.2]{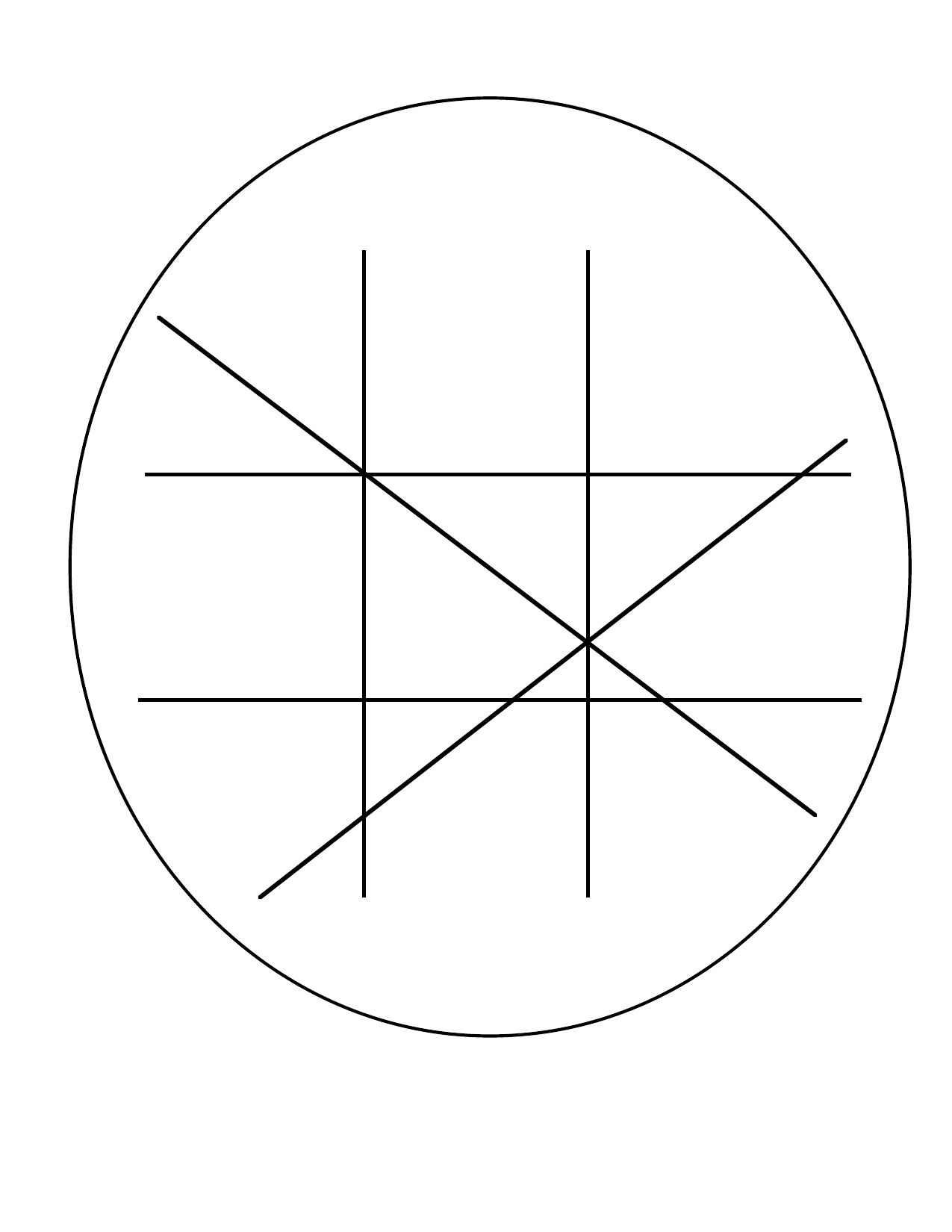}}
\end{subfigure}
\caption{Arrangements with groups of type $F_3$ but not $F_4$}
\label{fig:seven}
\end{figure}
\end{example}

\subsection{Bestvina-Brady arrangement groups}
In \cite{ACM15}, Artal, Cogolludo, and Matei generalize Example~\ref{ex:arvola3}.
For positive integers $n_1, \ldots, n_r$, they construct a projective line arrangement $\A(n_1,\ldots, n_r)$ of $\sum_{i=1}^r n_i $ lines, forming an $r$-gon of points of multiplicities $n_1+1, \ldots, n_r+1$. More precisely, there are $r$ distinguished lines in general position, forming the sides of an $r$-gon, with vertices labelled $1, \ldots, r$ in consecutive order. Passing through the $i^{\rm th}$ vertex there is a ``bushel" of $n_i-2$ lines, pairwise intersecting in double points elsewhere. In the language of \cite{EGT09}, the graph of multiple points forms an $r$-cycle.

We have illustrated this ``arrangement schema" in Figure~\ref{fig:acm}, with each bushel of lines represented by a single colored line. For later purposes we have placed one of the distinguished lines at infinity. The underlying matroids of these arrangements have path-connected realization spaces, so the diffeomorphism type of the complement is uniquely determined by the integers $n_1,\ldots, n_r$ by \cite{R1}.

\begin{figure}[h]
\centering
\includegraphics[scale=.25]{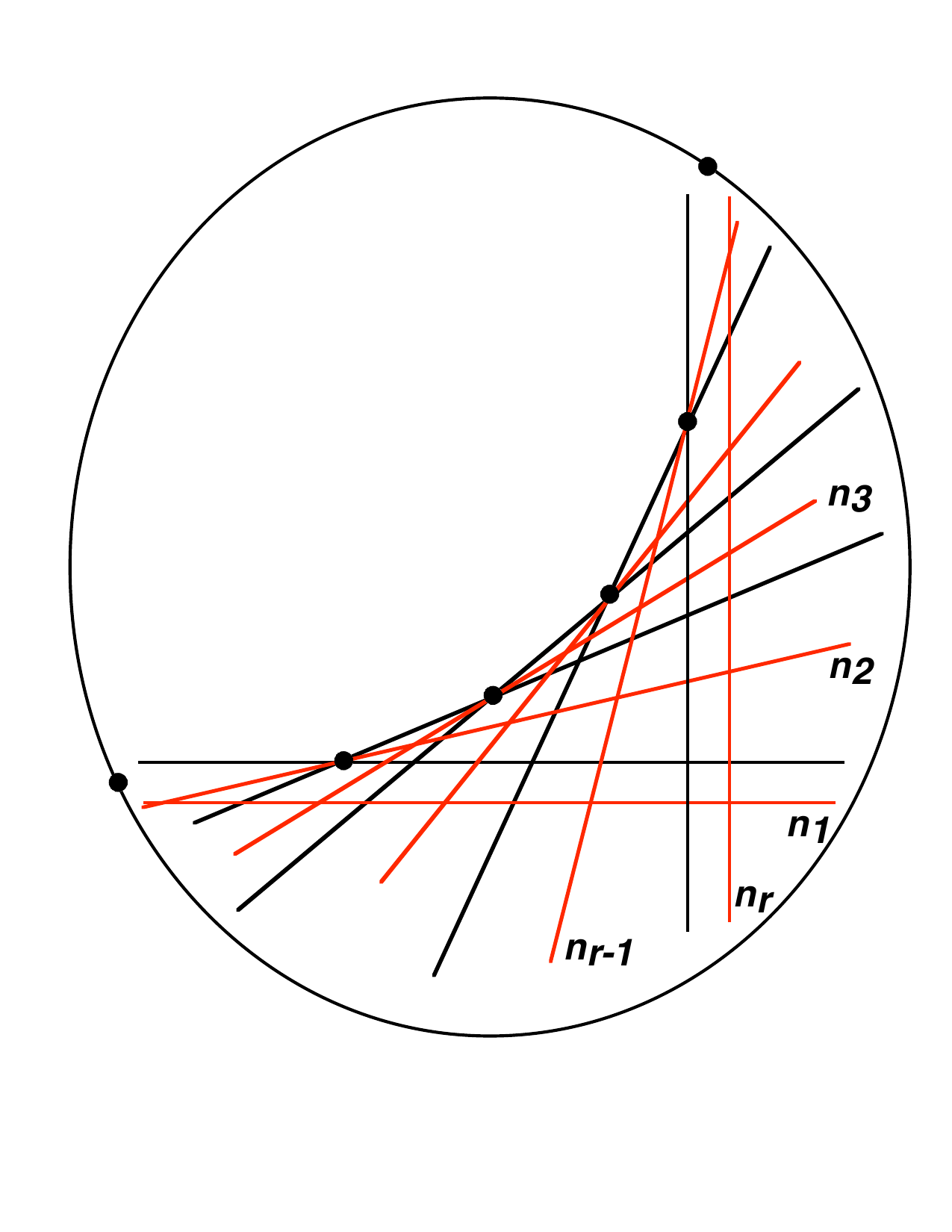}
\caption{The Artal-Cogolludo-Matei arrangement schema}
\label{fig:acm}
\end{figure}

In \cite{ACM15}, it is shown 
that the corresponding arrangement group is isomorphic to the kernel of the map $F_{n_1} \times \cdots \times F_{n_r} \to \Z$ sending each generator to 1, and hence is a Bestvina-Brady group of type $F_{r-1}$ and not $F_r$. We can reproduce their result using our method.

\begin{theorem} Let $\A=\A(n_1,\ldots, n_r)$ as defined above, and $G=G(\A)$. Let \X\ be the set of rank two flats of \A\ of cardinality greater than two, and $\bar{\rho}_\X \colon \bar{G} \to \prod_{S\in \X} \bar{G}_S$. Then $\bar{\rho}_\X$ is injective.
\label{thm:bestbrad}
\end{theorem}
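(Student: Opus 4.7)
The plan is to follow the strategy developed in Examples~\ref{ex:arvola2} and~\ref{ex:kohno}, combining Corollary~\ref{cor:affinj} with Corollary~\ref{cor:arrinjtest} and Proposition~\ref{prop:simplify}. First I will decone at one of the distinguished lines, say $L_r$, placed at infinity as in Figure~\ref{fig:acm}. Since the flat $S_2 \in \X$ does not contain $L_r$, Corollary~\ref{cor:affinj} reduces the claim to injectivity of the restriction $\hat{\rho} \colon \hat{G} \to \hat{A}$, where $\hat{G}$ is the group of the deconed affine arrangement and the product is taken over the family $\hat{\sS} = \hat{\sS}_0 \cup \hat{\sS}_\infty$ consisting of the rank-two flats $S_2, \ldots, S_{r-1}$ not involving $L_r$, together with the two maximal parallel classes $S_1 \setminus \{L_r\}$ and $S_r \setminus \{L_r\}$ produced by deconing.

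I will then establish injectivity of $\hat{\rho}$ by applying Corollary~\ref{cor:arrinjtest}. The family $\hat{\sS}$ covers the affine arrangement and its elements are pairwise incomparable, because consecutive multiple points share only a distinguished side of the $r$-gon. For condition (i), I invoke the conjugation-free presentation of $G(\A)$ guaranteed by \cite{EGT09} for arrangements whose graph of multiple points is a cycle; combined with Remark~\ref{rmk:conj-free}, this immediately yields $[a_H, a_K] = 1$ for all pairs transverse to $\hat{\sS}$.

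For condition (ii), I will verify $[a_H, G_S^{\, 2}] = 1$ for each $S \in \hat{\sS}$ and $H \notin S$ using Proposition~\ref{prop:simplify}. The key combinatorial input is that in an $r$-cycle multiplicity graph, any line $H$ not in the flat $S$ shares a multiple point with at most one element of $S$---namely, at most one of the two sides of the $r$-gon meeting at the vertex corresponding to $S$. Choosing $y_0$ to be that at-most-one non-commuting generator (and the remaining generators of $G_S$ as $y_1, \ldots, y_n$), condition (i) of Proposition~\ref{prop:simplify} holds from the conjugation-free presentation, and condition (ii)---the splitting $\langle y_0, \ldots, y_n\rangle = \langle y_0 \cdots y_n\rangle \times \langle y_1, \ldots, y_n\rangle$---follows from the standard structure of the local group at a multiple point, or from its projective closure in the case of a parallel class.

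The chief technical obstacle will be establishing the direct-product decomposition of Proposition~\ref{prop:simplify}(ii) for the parallel classes in $\hat{\sS}_\infty$, since the affine local group of a parallel class is itself a free group rather than of the form $F \times \Z$. This requires working in the central arrangement, where the corresponding rank-two flat has a genuine central generator, and then descending the splitting to the affine quotient via the identifications used in the proof of Corollary~\ref{cor:affinj}. Once condition (ii) is verified in all cases, Corollaries~\ref{cor:arrinjtest} and~\ref{cor:affinj} combine to give injectivity of $\bar{\rho}$, as claimed.
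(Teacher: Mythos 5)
Your overall route is the paper's: decone at a distinguished line, reduce to the affine group via Corollary~\ref{cor:affinj}, get condition (i) of Corollary~\ref{cor:arrinjtest} from the conjugation-free presentation (the paper derives it explicitly with the Randell algorithm and also notes it follows from \cite{EGT09} since the multiple-point graph is a cycle), and get condition (ii) from Proposition~\ref{prop:simplify}. So the architecture is fine. The problem is the one step you single out as the ``chief technical obstacle'': establishing the splitting of Proposition~\ref{prop:simplify}(ii) for a parallel class $P\in\sS_\infty$ by descending from the cone. That step would fail, not merely be delicate. The affine local group $G_P$ of a parallel class is free of rank $|P|$, and in a free group $y_0y_1\cdots y_n$ does not commute with $y_1,\ldots,y_n$, so no decomposition $\langle y_0,\ldots,y_n\rangle=\langle y_0\cdots y_n\rangle\times\langle y_1,\ldots,y_n\rangle$ can hold. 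The central-arrangement splitting you want to descend is $G_{S}\cong \hat G_{S}\times\langle z\rangle$ where $z$ involves the generator of the hyperplane at infinity; that generator is not present in $\hat G$, so nothing descends.

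Fortunately the obstacle is illusory, and this is how the paper avoids it: Proposition~\ref{prop:simplify} is asymmetric. Its splitting hypothesis (ii) is imposed only on the set $T=\{y_0,\ldots,y_n\}$, yet its conclusion gives \emph{both} $[x_i,\langle y_0,\ldots,y_n\rangle^2]=1$ and $[y_j,\langle x_0,\ldots,x_m\rangle^2]=1$. So whenever a parallel class $P$ and a genuine rank-two flat $S_i\in\sS_0$ share a distinguished line, you take the $y$'s to be $S_i$ (whose local group really is $F\times\Z$, so (ii) holds) and the $x$'s to be $P$; the second conclusion then handles $[a_H,G_P^{\,2}]=1$ for $H\in S_i-P$, and the first handles $[a_H,G_{S_i}^{\,2}]=1$ for $H\in P-S_i$. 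Every remaining pair $(H,T)$ with $H\notin T$ has $H$ meeting each line of $T$ in a double point, so $[a_H,G_T]=1$ outright. (The paper's own phrasing of the $T\in\sS_\infty$ case as ``clearly $[a_j,a_t]=1$ for all $t\in T$'' is itself too quick when $H_j$ meets the distinguished line of $T$ at the adjacent multiple point; the asymmetric application of Proposition~\ref{prop:simplify} just described is what actually closes that case.) With that repair your argument coincides with the paper's proof.
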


\begin{proof} Let $\widehat{G}$ be the group of a decone $d\A$, relative to one of the sides of the $r$-gon. We first show that $\widehat{G}$ is a cyclically-presented hyperplane (arrangement) group,  so $Y$ is adapted to \X\ by Corollary~\ref{conjfree}. 
Assume that the $y$-axis is far to the right in Figure \ref{fig:acm}, and order the non-vertical lines of $d\A$ by increasing $y$-intercept.  Then order the vertical lines of $d\A$ by increasing $x$-intercept.  Denote the distinguished lines of $d\A$ by $q_1,\dots,q_{r-1}$, $q_i<q_{i+1}$, so line $q_1$ is the top horizontal line, and $q_{r-1}$ is the left-most vertical line.  Write $n=\sum_{i=1}^r n_i$.

Sweeping a line of large negative slope from right to left in Figure \ref{fig:acm}, the Randell algorithm \cite{R3} yields a presentation for $\widehat{G}=G(d\A)$ with relations
\[
y_{q_i} y_{q_i+1} \cdots y_{q_{i+1}-1}y_{q_{i+1}}=y_{q_i+1} \cdots y_{q_{i+1}-1} y_{q_{i+1}} y_{q_i}= \ldots\ldots =y_{q_{i+1}} y_{q_i} y_{q_i+1} \cdots y_{q_{i+1}-1}
\]
for $1 \le i \le r-2$, and commutator relations $[y_s,y_t^{w_{s,t}}]$ where $1\le s < t \le n-1$, $s,t\notin \{q_i,\dots,q_{i+1}\}$ for $1\le i\le r-1$, and $w_{s,t}$ is a word in the $y_k$.  If $t=q_j$ for some $j$ or $t>q_{r-1}$, then $w_{s,t}=1$.  If $q_j < t < q_{j+1}$, then $w_{s,t}=y_{t+1} \cdots y_{q_{j+1}}$ and $s<q_j$.  
For each $j$, $1 \le j \le r-2$, the family of commutator relations
\[
[y_s,y_{q_{j}}], \ [y_s,y_t^{w_{s,t}}],\ [y_s,y_{q_{j+1}}], \quad t=q_j+1,\dots,q_{j+1}-1,
\]
provided by the Randell algorithm is easily seen to be equivalent to the family 
\[
[y_s,y_{q_{j}}], \ [y_s,y_t],\ [y_s,y_{q_{j+1}}], \quad t=q_j+1,\dots,q_{j+1}-1.
\]
Thus, $\widehat{G}$ admits a conjugation-free presentation. This may also be seen by applying the main result of \cite{EGT09}, since the graph of multiple points has a unique cycle.

We show $\widehat{\rho}_\X\colon \widehat{G} \to \prod_{S\in \X} G_S$ is injective using Theorem~\ref{thm:injtest} and Proposition~\ref{prop:simplify}. The result will then follow from Corollary~\ref{cor:affinj}. 
The first condition of Theorem~\ref{thm:injtest} holds by the Randell algorithm simplifications given in the previous paragraph. 
For the decone $d\A$, let $\X=\X_0 \cup \X_\infty$, where
$\X_0=\{ \{q_i,\dots,q_{i+1}\} \mid 1 \le i \le r-2\}$ and $\X_\infty=\{ \{1,\dots,q_1\}, \{q_{r-1},\dots,n-1\}\}$.  For $T \in \X$ and $j \notin T$, we must show that $[y_j, [G_T,G_T]]=1$.  There is a unique $S \neq T$ in $\X$ with $j \in S$.  If either $T \in \X_\infty$ or $S \cap T=\emptyset$, then clearly 
$[y_j, [G_T,G_T]]=1$ since $[y_j,y_t]=1$ for all $t \in T$ in either of these instances.  It remains to consider the case where $T\in \X_0$ and $S \cap T \neq\emptyset$.  In this case, $S$ and $T$ satisfy the hypotheses of Proposition \ref{prop:simplify}.  The result follows. %\qed
\end{proof}

\begin{corollary} If $\A=\A(n_1,\ldots, n_r)$, the group $\bar{G}(\A)$ satisfies properties (i)-(vii) of Theorem~\ref{thm:properties}.
\end{corollary}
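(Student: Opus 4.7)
The plan is to observe that this corollary is an essentially immediate consequence of the preceding theorem combined with Theorem~\ref{thm:properties}. The preceding theorem establishes that $\bar{\rho}=\bar{\rho}_\X \colon \bar{G} \to \prod_{S\in \X} \bar{G}_S$ is injective for $\A=\A(n_1,\ldots, n_r)$, where $\X$ is the set of rank-two flats of multiplicity greater than two. Theorem~\ref{thm:properties} then directly yields that $\bar{G}$ satisfies properties (ii)--(viii): residual freeness, torsionfreeness, residual torsionfree nilpotence, solvability of word and conjugacy problems, existence of a linear representation, residual finiteness, and the Haagerup property.

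The only item requiring a brief additional remark is property (i), since Theorem~\ref{thm:properties} as stated guarantees matroid determination only for the image $\image$, not for $\bar{G}$ itself. However, the injectivity of $\bar{\rho}$ means $\bar{G} \cong \image$, so matroid determination of $\image$ transfers to $\bar{G}$. Moreover, as noted in the discussion preceding the theorem, the underlying matroids of the arrangements $\A(n_1,\ldots, n_r)$ have path-connected realization spaces, so the diffeomorphism type of the complement (and hence $\bar{G}$ itself) depends only on the combinatorial data $(n_1,\ldots,n_r)$.

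I do not expect any obstacle here, since all the real work has already been carried out: the injectivity of $\bar{\rho}$ was proved in the preceding theorem (via the conjugation-free presentation of $G(d\A)$ together with the injectivity criterion of Corollary~\ref{cor:arrinjtest} and the simplification afforded by Proposition~\ref{prop:simplify}), and the qualitative properties (ii)--(viii) are inherited from subgroups of products of free groups via the general machinery of \cite{BriMi09} and \cite{CCJJV01}. The corollary is therefore a one-line deduction, essentially of the form: \emph{by the preceding theorem $\bar{\rho}$ is injective, and thus Theorem~\ref{thm:properties} applies}.
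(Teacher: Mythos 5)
Your proposal is correct and matches the paper, which states this corollary without proof as an immediate consequence of the preceding injectivity theorem together with Theorem~\ref{thm:properties}. Your extra remark about property (i) — that injectivity gives $\bar{G}\cong\image$ so combinatorial determination of the image transfers to $\bar{G}$ — is a sensible and accurate gloss on the one subtlety the paper leaves implicit.
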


\begin{proposition} \label{prop:bestbr}
The injection $\bar{\rho}_\X$ realizes $\bar{G}$ as a Bestvina-Brady group.
\end{proposition}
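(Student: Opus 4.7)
The plan is to realize $\image \cong \bar G$ as the kernel of a Bestvina--Brady-type homomorphism $\phi \colon \bar A \twoheadrightarrow \Z$ on the right-angled Artin group $\bar A = \prod_{S \in \X} \bar G_S \cong F_{n_1} \times \cdots \times F_{n_r}$, after choosing suitable free bases of the factors.

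I would first apply Theorem~\ref{thm:cokernel} to compute the cokernel. With $n = |\A| = \sum_i n_i,$ $m = |\X| = r,$ and $\sum_{S\in\X}|S| = \sum_i(n_i + 1) = n+r,$ the formula gives $\rank(\bar A/\image) = c,$ where $c$ is the number of components of $\Lambda_\X.$ The $r$ side lines of the polygon give edges of $\Lambda_\X$ that connect $S_1,\ldots,S_r$ cyclically, and each bundle line is a leaf, so $\Lambda_\X$ is connected, $c=1,$ and $\bar A/\image \cong \Z$ by Lemma~\ref{lem:freecoker}. Combined with the normality from Corollary~\ref{cor:normal}, this produces a surjection $\phi \colon \bar A \twoheadrightarrow \Z$ with $\ker \phi = \image.$

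Next I would identify $\phi$ explicitly through the edge-labeling description of $\ker R$ coming from Lemma~\ref{lem:freecoker}. An edge labeling of $\Lambda_\X$ lies in the one-dimensional $\ker R$ iff it sums to zero at every vertex. Each bundle line has a single incident edge, which is thus forced to carry label zero; the two edges at a side $s_{i,i+1}$ carry opposite labels $\alpha_i$ and $-\alpha_i;$ and the zero-sum condition at $S_i$ becomes $\alpha_i = \alpha_{i-1},$ propagating a common value $\alpha$ around the $r$-gon. Normalizing $\alpha = 1,$ the restriction of $\phi$ to each $\bar G_{S_i}$ sends the two side generators to $+1$ and $-1$ and the $n_i - 1$ bundle generators to $0.$ In particular $\phi|_{\bar G_{S_i}} \colon F_{n_i} \twoheadrightarrow \Z$ is surjective for every $i.$

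Finally, I would adjust free bases so that $\phi$ becomes the standard Bestvina--Brady map. For each $i,$ surjectivity of the abelianized map $\phi_{\ab} \colon \Z^{n_i} \twoheadrightarrow \Z$ produces a $\Z$-basis $\{v, v+u_1,\ldots,v+u_{n_i-1}\}$ of $\Z^{n_i}$ on which $\phi_{\ab}$ is identically $1$ (pick $v$ with $\phi_{\ab}(v) = 1$ and a basis $u_1,\ldots,u_{n_i-1}$ of $\ker\phi_{\ab}$), and the surjectivity of $\Aut(F_{n_i}) \twoheadrightarrow GL_{n_i}(\Z)$ then lifts this to a free basis $B_i$ of $\bar G_{S_i}$ on which $\phi$ is identically $+1.$ The set $B = \bigsqcup_i B_i$ generates $\bar A$ as a right-angled Artin group whose underlying flag complex is the join $B_1 * \cdots * B_r,$ and $\phi$ sends every element of $B$ to $+1,$ exhibiting $\image = \ker\phi$ as the Bestvina--Brady group associated to this flag complex. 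The main technical step is the explicit identification of $\phi$ as the edge labeling in the middle paragraph, which depends on the $r$-cycle structure of the multiple points; once that is in hand, the Bestvina--Brady structure follows from standard facts about automorphisms of free groups.
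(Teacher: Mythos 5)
Your proposal is correct and follows essentially the same route as the paper: compute via Theorem~\ref{thm:cokernel} and the connectivity of $\Lambda_\X$ that the cokernel is $\Z$, observe that the quotient map kills the bundle generators and sends the two distinguished generators of each factor to $\pm 1$, and then adjust free bases so that every generator maps to $+1$. The only (immaterial) differences are that you extract the functional from the null space of the incidence matrix and change bases abstractly via the surjection $\Aut(F_{n_i})\twoheadrightarrow GL_{n_i}(\Z)$, whereas the paper notes directly that the bundle generators lie in $\image$ and writes down the new basis $\{a_{i1},a_{i2},a_{i1}a_{i3},\ldots\}$ explicitly.
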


\begin{proof}
The graph $\Lambda_\X$ consists of a cycle formed by the distinguished hyperplanes and the elements of \X\, together with a pendant edge incident with each of the other vertices. Then $\coker(\bar{\rho}_\X)\cong H^1(\Lambda_\X,\Z)\cong \Z$.  

To complete the proof, we exhibit a free basis of $\bar{A}=\prod_{i=1}^r\bar{G}_{S_i}$ whose elements all map to the same generator of $\coker(\bar{\rho}_\X)$. Fix $i$ and write $S_i=\{H_1,\ldots, H_{n_i}\}$ with $H_1$ and $H_2$ being the distinguished hyperplanes, and let $y_{i1}, \ldots, y_{in_i}$ be the corresponding canonical generators of $G_{S_i}$. Each of the $y_{ij}$ for $j\geq 3$ lies in $\image_\X$, hence maps to zero in $\coker(\bar{\rho}_\X)$. Using the identification of $\coker(\bar{\rho}_\X)$ with $\coker((\bar{\rho}_\X)_{\ab})$, it is clear that $y_{i1}$ and $y_{i2}$ map to $\pm 1$. By reversing orientation we may assume they each map to 1. Then $\{y_{i1}, y_{i2}, y_{i1}y_{i3},\ldots,  y_{i1}y_{i,n_{i-1}}\}$ is a free basis of $\bar{G}_{S_i}$, each element of which maps to 1 in \Z. Repeating the process for each $i, 1\leq i \leq r$, yields the desired free basis of $\bar{A}$. Since $\bar{G}\cong \image_\X = \ker(\bar{A} \to \coker(\bar{\rho}_\X))\cong \Z$, this proves the claim.
%\qed
\end{proof}

As noted above, this proposition reproduces results of 
\cite{ACM15}. By \cite{DPS08a}, these are the only quasi-projective groups that are Bestvina-Brady groups, aside from products of free groups. 

\subsection{Decomposable arrangements} 
\label{subsec:decomp}
For any group $G$, let $\{G^n \mid n \geq 1\}$ denote the lower central series of $G$, and let $\phi_n(G)$ denote the rank of the finitely-generated abelian group $G^n/G^{n+1}$. 
Let $\Lie_n(G)=G^n/G^{n+1}$ for $n \geq 1$, and $\Lie(G)=\oplus_{n=1}^\infty \Lie_n(G)$. 
If $Y$ is a set of generators of $G$ and  $\sS \subseteq 2^Y$, we have the product of canonical homomorphisms $\Lie(\rho_\sS)_n\colon \Lie_n(G) \to \prod_{S\in \sS} \Lie_n(G_S)$ for each $n\geq 1$. 	

The following notion is motivated by arrangements theory, as will be seen explicitly below.
\begin{definition} The group $G$ is {\em decomposable} with respect to $Y$ and \sS\ if $\Lie(\rho_\sS)_n$ is an injection for every $n \geq 1$.
\end{definition}
 
The {\em nilpotent residue} $G^\omega$ of $G$ is the 
intersection $\bigcap_{n=1}^\infty G^n$, so $G$ is residually nilpotent if $G^\omega=1$. 

\begin{proposition} 
Let $Y$ be a set of generators for $G$. Assume that $\sS \subseteq 2^Y$ covers $Y$, and that $G$ is decomposable with respect to $Y$ and \sS. If $G_S$ is residually nilpotent for every $S \in \sS$, then the kernel of $\rho_\sS \colon G \to \prod_{S \in \sS} G_S$ is equal to $G^\omega$.
\label{residue}
\end{proposition}

\begin{proof} The product of residually nilpotent groups is residually nilpotent, so $G^\omega\subseteq \ker(\rho_{\sS})$ by the assumption on the $G_S$. Suppose $g\in \ker(\rho_{\sS})- G^\omega$. Choose $n\geq 1$ minimal with $g\not \in G^{n+1}$. Then $g \in G_n$ and $gG_{n+1}$ is a nontrivial element of the kernel of the map $\Lie(\rho_\sS)_n\colon \Lie_n(G) \to \prod_{S\in \sS} \Lie_n(G_S)$, contradicting the assumption that $G$ is decomposable with respect to $Y$ and \sS. %\qed
\end{proof}

Let \A\ be an arrangement and let $\X=\L^{[2]}$ be the set of all rank two flats of \A. Let $G=G(\A)$ and $G_X=G(\A_X)$ for $\A_X \in \X$ denote the corresponding arrangement groups.

\begin{definition}(\cite{PS06a}) The arrangement \A\ is {\em decomposable} if 
\[
\Lie(\rho_\X)_3 \otimes \k \colon \Lie_3(G) \otimes \k \to \prod_{S\in \X} \Lie_3(G_S) \otimes \k 
\]
is an isomorphism for every field \k. 
\end{definition}

If \A\ is decomposable, then $\phi_3(G)=\sum_{S \in \X} \phi_3(G_S)$; this equality depends only on the lattice of \A.  The converse depends on the torsion-freeness of $\Lie_3(G)$, which is presently not known -- see \cite{PS19} in this volume for recent progress. 

Theorem 2.4 of \cite{PS06a} implies $\Lie_n(\rho_\sS)$ is an isomorphism for all $n \geq 2$ for decomposable arrangements.   The arrangements of Examples \ref{ex:arvola2} and \ref{ex:4not5} are decomposable -- see \cite[Section 7]{PS06a}.  
This also follows from our calculations, with the following observation.
\begin{proposition} Suppose \sS\ is a family of rank-two flats that includes all flats of multiplicity greater than two, and $\rho_\sS$ is injective. Then $G$ is decomposable.
\end{proposition}

\begin{proof} Let $A=\prod_{S\in \sS} G_S$. The  hypothesis on \sS\ implies $\Lie_3(A)=\prod_{S\in \X} \Lie_3(G_S)$. By the injectivity hypothesis and Propositions~\ref{cor:normal} and \ref{prop:coker}, there is an exact sequence 
\[
\xymatrix{
1 \ar[r] &G \ar[r]^{\ \rho_\sS} & A \ar[r] & Q \ar[r] &1\\
}
\]
with $Q$ abelian. Then $\rho_\sS$ induces an isomorphism $G^n \to A^n$ for all $n\geq  2$, and hence an isomorphism $\Lie_3(G) \to \Lie_3(A)$.%\qed
\end{proof}

\begin{proposition} Suppose \A\ is a decomposable arrangement, and $Y$ is a standard set of generators for the arrangement group $G$. Then $G$ is decomposable with respect to $Y$ and $\X=\L^{[2]}$.
\label{decomparr}
\end{proposition}

\begin{proof} Since $G_X$ is a product of free groups, $\Lie_n(G_X)$ is free abelian for every $n \geq 1$ and $\A_X \in \X$. By \cite{PS06a}, the hypothesis implies $\Lie(\rho_\X)_n$ is an isomorphism for every $n \geq 2$. %\qed
\end{proof}

Propositions ~\ref{residue} and \ref{decomparr} yield the following corollary.

\begin{corollary} Suppose \A\ is a decomposable arrangement with group $G$, and \X\ is a family of rank two flats that covers \A. Then the kernel of $\rho_\X$ is equal to $G^\omega$.
\end{corollary}

We deduce a dichotomy for decomposable arrangement groups. Loosely speaking, decomposable arrangement groups either have conjugation-free presentations, or are not residually nilpotent.

\begin{proposition} Suppose \A\ is a decomposable arrangement with group $G$. Suppose $G$ is residually nilpotent. Then any standard set of generators arising from a braid monodromy presentation of $G$ is adapted to the set $\L^{[2]}$ of all rank two flats.
\label{dicho}
\end{proposition}

\begin{proof} 
Let $Y$ be a standard set of generators arising from a braid monodromy presentation of $G$ \cite{CS2}. Let $\X=\L^{[2]}$. Then \X\ covers \A, so $\rho_\X$ injective, by the preceding corollary and the hypothesis on $G$.  Suppose $\A_X$ is a rank two flat and the subset $S_X$ of $Y$ is not retractive. From the braid monodromy presentation, the quotient $G_X$ has a presentation with generators $y_H$, $H \in \A_X$ and relations 
\begin{equation}
\Bigl[\prod_{H \in \A_X} y_H, y_K\Bigr]=1, \ \ K \in \A_X,
\tag{$\ddag$}
\label{loccom}
\end{equation}
with some fixed order of factors in the product. Since $S_X$ is not retractive, at least one of these relations must fail to hold in $G$. Choose $K \in \A_X$ with $\xi=[\prod_{H \in \A_X} y_H, y_K] \neq 1$ in $G$. We claim $\rho_\X(\xi)=1$, a contradiction. First $\rho_X(\xi)=1$ because the relations \eqref{loccom}  hold in $G_X$. Let $Z \in \X$ with $Z \neq X$. If $K \not \in \A_Z$ $\rho_Z(y_K)=1$ so $\rho_Z(\xi)=1$. Otherwise, since $X$ and $Z$ are rank two flats, $\A_X \cap \A_Z=\{K\}$, so $\rho_Z(\prod_{H \in \A_X} y_H)=y_K$, and $\rho_Z(\xi)=[y_K,y_K]=1$. Then $\rho_\X(\xi)=1$. This completes the proof. %\qed
\end{proof}

\begin{example} The arrangement \A\ given in Example~\ref{ex:kohno} is decomposable, 
and the group $G(\A)$ is residually nilpotent by Theorem~\ref{thm:properties}. Then Proposition~\ref{dicho} implies that the relators $[y_4,y_5^{y_1}]$ and $[y_1,y_6^{y_2}]$ in the Randell presentation of $G(\A)$ can be replaced by $[y_4,y_5]$, and $[y_1,y_6]$, respectively.
\end{example}

\section*{Acknowledgements}  We are grateful to Graham Denham, Daniel Matei, and Alex Suciu for many helpful discussions. Theorem 4.1.6 was formulated with the help of Daniel Malcolm when he was an undergraduate research student of the second author. 

This project was begun during the program on hyperplane arrangements at Mathematical Sciences Research Institute in 2004, and was continued during conferences at the Pacific Institute of Mathematical Sciences, the Fields Institute, the University of Zaragoza, and Hokkaido University, and during the program on configuration spaces at Centro di Ricerca Matematica Ennio de Giorgi in Pisa. The authors thank these institutions and the meeting organizers for their support and hospitality. We also thank several anonymous referees for their careful reading and helpful suggestions.
%\end{acknowledgements} 


\begin{thebibliography}{10}
\providecommand{\url}[1]{\texttt{#1}}
\providecommand{\urlprefix}{\relax}
\providecommand{\selectlanguage}[1]{\relax}
\providecommand{\eprint}[2][arXiv]{\textt{#1:#2}}
\providecommand*{\ZBL}[2][Zbl]{#1~#2}
\providecommand*{\Zbl}[1]{\ZBL[Zbl]{#1}}
\providecommand*{\JFM}[1]{\ZBL[JFM]{#1}}
\providecommand*{\ERAM}[1]{\ZBL[ERAM]{#1}}

\bibitem{ACM15}
E.~Artal~Bartolo, J.~Cogolludo-Agust\'in, and D.~Matei, Arrangements of
  hypersurfaces and {B}estvina-{B}rady groups. \emph{Groups Geom. Dyn.}
  \textbf{9} (2015), \mbox{103--131}.
  \href{http://www.ams.org/mathscinet-getitem?mr=3343348}{MR3343348}.

\bibitem{Arv92}
W.~Arvola, Arrangements and cohomology of groups. Manuscript, 1992. 

\bibitem{Ar3}
W.~Arvola, The fundamental group of the complement of an arrangement of complex
  hyperplanes. \emph{Topology} \textbf{31} (1992), \mbox{757--765}.
  \href{http://www.ams.org/mathscinet-getitem?mr=1191377}{MR1191377}.

\bibitem{Best04}
M.~Bestvina, Questions in geometric group theory. \hfill \ \\
  {\small{\href{http://www.math.utah.edu/~bestvina/eprints/questions-updated.pdf}{www.math.utah.edu/$\sim$bestvina/eprints/questions-updated.pdf}}}.

\bibitem{Big01}
S.~Bigelow, Braid groups are linear. \emph{J. Amer. Math. Soc.} \textbf{14}
  (2001), \mbox{471--486}.
  \href{http://www.ams.org/mathscinet-getitem?mr=1815219}{MR1815219}.

\bibitem{Bi75}
J.~Birman, \emph{Braids, Links, and Mapping Class Groups}. Annals of
  Mathematics Studies, 82.
   Princeton
  University Press, Princeton, N.J., 1975. 
  \href{http://www.ams.org/mathscinet-getitem?mr=375281}{MR375281}.

\bibitem{BriMi09}
M.~Bridson and C.~Miller, Structure and finiteness properties of subdirect
  products of groups. \emph{Proc. Lond. Math. Soc.} \textbf{98} (2009),
  \mbox{631--651}.
  \href{http://www.ams.org/mathscinet-getitem?mr=2500867}{MR2500867}.

\bibitem{Bries73}
E.~Brieskorn, \emph{Sur les groupes de tresses}, Lecture Notes in Mathematics,
  317. Springer-Verlag, Berlin, Heidelberg, New York, 1973, \mbox{21--44}.
  \href{http://www.ams.org/mathscinet-getitem?mr=0422674}{MR0422674}.

\bibitem{CCJJV01}
P.-A. Cherix, M.~Cowling, P.~Jolissaint, P.~Julg, and A.~Valette, \emph{Groups
  with the {H}aagerup Property}. Progress in Mathematics, 197.
  Birkh\"auser Verlag, Basel, 2001.   
   \href{http://www.ams.org/mathscinet-getitem?mr=1852148}{MR1852148}.

\bibitem{Coh01}
D.~Cohen, Monodromy of fiber-type arrangements and orbit configuration spaces.
  \emph{Forum Math.} \textbf{13} (2001), \mbox{505--530}.
  \href{http://www.ams.org/mathscinet-getitem?mr=1830245}{MR1830245}.

\bibitem{Coh10}
D.~Cohen, Cohomology rings of almost-direct products of free groups.
  \emph{Compos. {M}ath.} \textbf{146} (2010), \mbox{465--479}.
  \href{http://www.ams.org/mathscinet-getitem?mr=2601635}{MR2601635}.

\bibitem{CCP07}
D.~Cohen, F.~Cohen, and S.~Prassidis, Centralizers of {L}ie algebras associated
  to descending central series of certain poly-free groups. \emph{J. Lie
  Theory} \textbf{17} (2007), \mbox{379--397}.
  \href{http://www.ams.org/mathscinet-getitem?mr=2325705}{MR2325705}.

\bibitem{CF19}
D.~Cohen and M.~Falk, On graphic arrangement groups.
  preprint, 2019.
  \href{https://arxiv.org/abs/1908.07664}{arxiv.org/abs/1908.07664}.

\bibitem{CFR11}
D.~Cohen, M.~Falk, and R.~Randell, Pure braid groups are not residually free.
  In \emph{Configuration spaces}, CRM Series, 14. Ed.
  Norm., Pisa, 2012, \mbox{213--230}.
  \href{http://www.ams.org/mathscinet-getitem?mr=3203640}{MR3203640}.

\bibitem{CS2}
D.~Cohen and A.~Suciu, The braid monodromy of plane algebraic curves and
  hyperplane arrangements. \emph{Comment. Math. Helv.} \textbf{72} (1997),
  \mbox{285--315}.
  \href{http://www.ams.org/mathscinet-getitem?mr=1470093}{MR1470093}.

\bibitem{CS5}
D.~Cohen and A.~Suciu, Homology of iterated semidirect products of free groups.
  \emph{J. Pure Appl. Algebra} \textbf{126} (1998), \mbox{87--120}.
  \href{http://www.ams.org/mathscinet-getitem?mr=1600518}{MR1600518}.

\bibitem{CohKam07}
F.~Cohen and Y.~Kamiyama, Configurations and parallelograms associated to
  centers of mass. In \emph{Proceedings of the {S}chool and {C}onference in
  {A}lgebraic {T}opology}, Geom. Topol. Monogr., 11. Geom.
  Topol. Publ., Coventry, 2007, \mbox{17--32}.
  \href{http://www.ams.org/mathscinet-getitem?mr=2402799}{MR2402799}. 

\bibitem{DSY16}
G.~Denham, A.~Suciu, and S.~Yuzvinsky, Combinatorial covers and vanishing of cohomology,. \emph{Selecta Math.} \textbf{22} (2016), no. 2, \mbox{561-594}.
  \href{http://www.ams.org/mathscinet-getitem?mr=3477330}{MR3477330}.
  
\bibitem{DPS08a}
A.~Dimca, S.~Papadima, and A.~Suciu, Quasi-{K}\"ahler {B}estvina-{B}rady
  groups. \emph{J. Algebraic Geom.} \textbf{17} (2008), \mbox{185--197}.
  \href{http://www.ams.org/mathscinet-getitem?mr=2357684}{MR2357684}.

\bibitem{EGT09}
M.~Eliyahu, D.~Garber, and M.~Teicher, A conjugation-free geometric
  presentation of fundamental groups of arrangements. \emph{Manuscripta Math.}
  \textbf{133} (2010), \mbox{247--271}.
  \href{http://www.ams.org/mathscinet-getitem?mr=2672548}{MR2672548}.

\bibitem{FN}
E.~Fadell and L.~Neuwirth, Configuration spaces. \emph{Math. Scand.}
  \textbf{10} (1962), \mbox{111--118}.
  \href{http://www.ams.org/mathscinet-getitem?mr=0141126}{MR0141126}.

\bibitem{F4}
M.~Falk, The minimal model of the complement of an arrangement of hyperplanes.
  \emph{Trans. Amer. Math. Soc.} \textbf{309} (1988), \mbox{543--556}.
  \href{http://www.ams.org/mathscinet-getitem?mr=0929668}{MR0929668}.

\bibitem{F8}
M.~Falk, The cohomology and fundamental group of a hyperplane complement. In
  \emph{Singularities}, Contemporary Mathematics, 90. American Mathematical
  Society, 1989, \mbox{55--72}.
    \href{http://www.ams.org/mathscinet-getitem?mr=1000594}{MR1000594}.

\bibitem{F3}
M.~Falk, Homotopy types of line arrangements. \emph{Invent. {M}ath.}
  \textbf{111} (1993), \mbox{139--150}.
  \href{http://www.ams.org/mathscinet-getitem?mr=1193601}{MR1193601}.

\bibitem{Fa97}
M.~Falk, Arrangements and cohomology. \emph{Ann. Comb.} \textbf{1} (1997),
  \mbox{135--157}. \href{http://www.ams.org/mathscinet-getitem?
  mr=1629681}{MR1629681}.

\bibitem{FP}
M.~Falk and N.~Proudfoot, Parallel connections and bundles of arrangements.
  \emph{Topology Appl.} \textbf{118} (2002), \mbox{65--83}.
  \href{http://www.ams.org/mathscinet-getitem?mr=1877716}{MR1877716}.

\bibitem{FR2}
M.~Falk and R.~Randell, The lower central series of a fiber-type arrangement.
  \emph{Invent. {M}ath.} \textbf{82} (1985), \mbox{77--88}.
  \href{http://www.ams.org/mathscinet-getitem?mr=808110}{MR808110}.

\bibitem{FaRa86}
M.~Falk and R.~Randell, On the homotopy theory of arrangements. In T.~Suwa
  (ed.), \emph{Complex Analytic Singularities}, Advanced Studies in
  Mathematics, 8. North
  Holland, 1986, \mbox{101--124}.
  \href{http://www.ams.org/mathscinet-getitem?mr=894288}{MR894288}. 

\bibitem{FY07}
M.~Falk and S.~Yuzvinsky, Multinets, resonance varieties, and pencils of plane
  curves. \emph{Compos. {M}ath.} \textbf{143} (2007), \mbox{1069--1088}.
  \href{http://www.ams.org/mathscinet-getitem?mr=2339840}{MR2339840}.

\bibitem{Fried16}
M.~Friedman, Cyclically presented groups, lower central series and line
  arrangements. \emph{Topology Appl.} \textbf{209} (2016), \mbox{239--262}.
  \href{http://www.ams.org/mathscinet-getitem?mr=35234783}{MR3523478}.

\bibitem{Fu82}
T. Fujita, On the topology of noncomplete algebraic surfaces. 
\emph{J. Fac. Sci. Univ. Tokyo Sect. IA Math.} \textbf{29} (1982), \mbox{503--566}.
  \href{http://www.ams.org/mathscinet-getitem?mr=687591}{MR687591}.


\bibitem{HaLe73}
H. Hamm and L\^{e} D. T.,
Un th\'{e}or\`eme de {Z}ariski du type de
  {L}efschetz. \emph{Ann. Sci. \'{E}cole Norm. Sup. (4)} \textbf{6} (1973),
  \mbox{317--355}.    
  \href{http://www.ams.org/mathscinet-getitem?mr=0401755}{MR0401755}.
  
\bibitem{Kra02}
D.~Krammer, Braid groups are linear. \emph{Ann. of Math. (2)} \textbf{155}
  (2002), \mbox{131--156}.
  \href{http://www.ams.org/mathscinet-getitem?mr=1888796}{MR1888796}.

\bibitem{MKS76}
W.~Magnus, A.~Karrass, and D.~Solitar, \emph{Combinatorial Group Theory}.
  second ed. Dover
  Publications Inc., Mineola, NY, 2004.
  \href{http://www.ams.org/mathscinet-getitem?mr=2109550}{MR2109550}.

\bibitem{Marco09}
M.~Marco-Buzun{\'a}riz, A description of the resonance variety of a line
  combinatorics via combinatorial pencils. \emph{Graphs Combin.} \textbf{25}
  (2009), \mbox{469--488}.
  \href{http://www.ams.org/mathscinet-getitem?mr=2575595}{MR2575595}.

\bibitem{MatSuc}
D.~Matei and A.~Suciu, private communication. 2004.

\bibitem{MMV98}
J.~Meier, H.~Meinert, and L.~V. Wyk, Higher generation subgroup sets and the
  {$\Sigma$}-invariants of graph groups. \emph{Comment. Math. Helv.}
  \textbf{73} (1998), \mbox{22--44}.
  \href{http://www.ams.org/mathscinet-getitem?mr=1610579}{MR1610579}.

\bibitem{OT92}
P.~Orlik and H.~Terao, \emph{Arrangements of Hyperplanes}.
  Springer-Verlag, Berlin Heidelberg New York, 1992.
  {\href{http://www.ams.org/mathscinet-getitem?mr=1217488}{MR1217488}}.

\bibitem{PP02}
L.~Paoluzzi and L.~Paris, A note on the {L}awrence-{K}rammer-{B}igelow
  representation. \emph{Algebr. Geom. Topol.} \textbf{2} (2002),
  \mbox{499--518}.
  \href{http://www.ams.org/mathscinet-getitem?mr=1917064}{MR1917064}.

\bibitem{PS06a}
S.~Papadima and A.~Suciu, When does the associated graded {L}ie algebra of an
  arrangement decompose{?} \emph{Comment. Math. Helv.} \textbf{81} (2006),
  \mbox{859--875}.
  \href{http://www.ams.org/mathscinet-getitem?mr=2271225}{MR2271225}.

\bibitem{Par00}
L.~Paris, Intersection subgroups of complex hyperplane arrangements.
  \emph{Topology Appl.} \textbf{105} (2000), \mbox{319--343}.
  \href{http://www.ams.org/mathscinet-getitem?mr=1769026}{MR1769026}.

\bibitem{Pa09}
L.~Paris, Braid groups and {A}rtin groups. In \emph{Handbook of {T}eichm\"uller
  theory. {V}ol. {II}}, IRMA Lect. Math. Theor. Phys., 13. Eur. Math. Soc.,
  Z\"urich, 2009, \mbox{389--451}.
  \href{http://www.ams.org/mathscinet-getitem?mr=2497781}{MR2497781}.

\bibitem{PerYuz07}
J.~Pereira and S.~Yuzvinsky, Completely reducible hypersurfaces in a pencil.
  \emph{Adv. Math.} \textbf{219} (2008), \mbox{672--688}.
  \href{http://www.ams.org/mathscinet-getitem?mr=2435653}{MR2435653}.

\bibitem{PS19}
R.~Porter and A.~Suciu, Homology, lower central series, and hyperplane arrangements.
  preprint, 2019.
  \href{https://arxiv.org/abs/1906.04885}{arxiv.org/abs/1906.04885}.
  
\bibitem{R3}
R.~Randell, The fundamental group of the complement of a union of complex
  hyperplanes. \emph{Invent. {M}ath.} \textbf{69} (1982), \mbox{103--108}.
  Correction, {\it Invent. Math.}, \textbf{80} (1985), 467--468, 
  \href{http://www.ams.org/mathscinet-getitem?mr=671654}{MR671654}.

\bibitem{R1}
R.~Randell, Lattice-isotopic arrangements are topologically isomorphic.
  \emph{Proc. Amer. Math. Soc.} \textbf{107} (1989), \mbox{555--559}.
  \href{http://www.ams.org/mathscinet-getitem? mr=984812}{MR984812}.

\bibitem{SV}
V.~Schechtman and A.~Varchenko, Arrangements of hyperplanes and {L}ie algebra
  homology. \emph{Invent. {M}ath.} \textbf{106} (1991), \mbox{139--194}.
  \href{http://www.ams.org/mathscinet-getitem?mr=1123378}{MR1123378}.

\bibitem{Stall63}
J.~Stallings, A finitely presented group whose 3-dimensional integral homology
  is not finitely generated. \emph{Amer. J. Math.} \textbf{85} (1963),
  \mbox{541--543}.
  \href{http://www.ams.org/mathscinet-getitem?mr=0158917}{MR0158917}.

\bibitem{Stanf99}
T.~Stanford, Brunnian braids and some of their generalizations.
 Manuscript, 1999.\\
  \href{http://arxiv.org/abs/math/9907072}{arxiv.org/abs/math/9907072}.

\bibitem{Suc14}
A.~Suciu, Characteristic varieties and Betti numbers of free abelian covers. 
\emph{Int. Math. Res. Not. IMRN} 2014, no. 4,
  \mbox{1063--1124}.
  \href{http://www.ams.org/mathscinet-getitem?mr=3168402}{MR3168402}.

\bibitem{T2}
H.~Terao, Modular elements of lattices and topological fibration. \emph{Adv.
  Math.} \textbf{62} (1986), \mbox{135--154}.
  \href{http://www.ams.org/mathscinet-getitem?mr=865835}{MR865835}.

\bibitem{Z5}
G.~Ziegler, The difference between real and complex arrangements. \emph{Math.
  Z.} \textbf{212} (1993), \mbox{1--11}.
  \href{http://www.ams.org/mathscinet-getitem?mr=1200161}{MR1200161}.

\end{thebibliography}
\end{document}